\newcommand{\eqq}[2]{\begin{equation}  #1  \label{#2} \end{equation}    }
\newcommand{\hd}{\hspace{0.2cm}}
\newcommand{\no}{\noindent}
\newcommand{\N}{\mathbb{N}}
\newcommand{\en}{\mathbb{N}}
\newcommand{\rr}{\mathbb{R}}
\newcommand{\pt}{\frac{d}{dt}}
\newcommand{\da}{D^{\alpha}}
\newcommand{\das}{D^{\alpha}_{s}}
\newcommand{\ia}{I^{\alpha}}
\newcommand{\ias}{I^{\alpha}_{s}}
\newcommand{\ijas}{I^{1-\alpha}_{s}}
\newcommand{\qs}{Q_{s,T}}
\newcommand{\qst}{Q_{s,t}}
\newcommand{\ga}{\Gamma(\alpha)}
\newcommand{\gja}{\Gamma(1-\alpha)}
\newcommand{\jga}{\frac{1}{\ga}}
\newcommand{\jgja}{\frac{1}{\gja}}
\newcommand{\gaa}{\Gamma(\alpha)\Gamma(1-\alpha)}
\newcommand{\jgaa}{\frac{1}{\gaa}}
\newcommand{\ca}{c_{\alpha}}
\newcommand{\ja}{}
\newcommand{\inb}{\int_{0}^{b}}
\newcommand{\izt}{\int_{0}^{t}}
\newcommand{\izT}{\int_{0}^{T}}
\newcommand{\izj}{\int_{0}^{1}}
\newcommand{\izta}{\int_{0}^{\tau}}
\newcommand{\itt}{\int_{t_*}^{t}}
\newcommand{\itta}{\int_{t_*}^{\tau}}
\newcommand{\ta}{(t-\tau)^{-\alpha}}
\newcommand{\tamj}{(t-\tau)^{\alpha-1}}
\newcommand{\tap}{(\tau-p)^{-\alpha}}
\newcommand{\tapj}{(\tau-p)^{-\alpha-1}}
\newcommand{\taj}{(t-\tau)^{-\alpha-1}}
\newcommand{\uxt}{u(x,\tau)}
\newcommand{\utt}{u_{\tau}(x,\tau)}
\newcommand{\dt}{d\tau}
\newcommand{\ith}{\int_{0}^{t-h}}
\newcommand{\itht}{\int_{t-h}^{t}}
\newcommand{\ibh}{\inb \ith}
\newcommand{\ibth}{\int_{b}^{s(t-h)}}
\newcommand{\isth}{\int_{s^{-1}(x)}^{t-h}}
\newcommand{\ibsh}{\ibth \isth}
\newcommand{\sj}{s^{-1}(x)}
\newcommand{\izs}{\int_{0}^{s(t)}}
\newcommand{\izsT}{\int_{0}^{s(T)}}
\newcommand{\izst}{\int_{0}^{s(\tau)}}
\newcommand{\snm}{\sum_{n=0}^{m}}
\newcommand{\cnmt}{c_{n,m}(t)}
\newcommand{\vn}{\varphi_{n}}
\newcommand{\vk}{\varphi_{k}}
\newcommand{\vkx}{\varphi_{k,x}}
\newcommand{\lan}{\lambda_{n}}
\newcommand{\lk}{\lambda_{k}}
\newcommand{\cln}{\cos{\lan(t)x}}
\newcommand{\qds}{\sqrt{\frac{2}{s(t)}}}
\newcommand{\nbc}{\| \bc \|_{X(t)}}
\newcommand{\nbcj}{\| \bc \|_{X(T_{1})}}
\newcommand{\ccT}{\| c \|_{X(T)}}
\newcommand{\ep}{\varepsilon}
\newcommand{\gem}{g^{\ep}_{m}}
\newcommand{\get}{G^{\ep}(t)}
\newcommand{\df}{\widetilde{D}}
\newcommand{\dos}{\dot{s}}
\newcommand{\iztj}{\int_{0}^{t_{1}}}
\newcommand{\iztd}{\int_{0}^{t_{2}}}
\newcommand{\iztjd}{\int_{t_{1}}^{t_{2}}}
\newcommand{\izjjd}{\int_{\frac{t_{1}}{t_{2}}}^{1}}
\newcommand{\tamjj}{(t_{1}-\tau)^{\alpha-1}}
\newcommand{\tamjd}{(t_{2}-\tau)^{\alpha-1}}
\newcommand{\nf}{\| f \|}
\newcommand{\tadj}{t_{2}^{\alpha}-t_{1}^{\alpha}}
\newcommand{\tdj}{t_{2}-t_{1}}
\newcommand{\tdja}{(\tdj)^{\alpha}}
\newcommand{\tdjaj}{(\tdj)^{\alpha-1}}
\newcommand{\xtj}{X(T_{1})}
\newcommand{\bc}{\bar{c}}
\newcommand{\pf}{\tilde{P}}
\newcommand{\vtm}{\widetilde{v}_{m}  }
\theoremstyle{plain}
\newtheorem{remark}{Remark}[section] 	
\newtheorem{corollary}{Corollary}[section]	
\newtheorem{theorem}{Theorem}[section] 	
\newtheorem{lemma}{Lemma}[section]	
\theoremstyle{definition}
\newtheorem{definition}{Definition}[section]
\numberwithin{equation}{section}
\newcommand{\kom}{}  
\def\cP{\mathcal P}
\begin{document}
\title{Weak solutions of fractional differential equations in non cylindrical domains}

\author{A. Kubica${}^1$, P. Rybka${}^2$, K. Ryszewska${}^1$\\
\medskip\\
${}^1$
Department of Mathematics and Information Sciences\\
Warsaw University of Technology\\
ul. Koszykowa 75, 00-662 Warsaw, Poland\\e-mail:
{\tt A.Kubica@mini.pw.edu.pl}
\medskip\\
${}^2$ Institute of Applied Mathematics and Mechanics,
Warsaw University\\ ul. Banacha 2, 02-097 Warsaw, Poland\\
e-mail: {\tt rybka@mimuw.edu.pl}}

\maketitle
\begin{quote}
 \footnotesize
{\bf Abstract.} We study a time fractional heat equation in a non cylindrical domain. The problem is one-dimensional. We prove existence of properly defined weak solutions by means of the Galerkin approximation.
\end{quote}

\bigskip\noindent
{\bf Key words:} time fractional Caputo derivative, heat equation, Galerkin method

\bigskip\noindent
{\bf 2010 Mathematics Subject Classification.} Primary: 35R11 Secondary: 35K45

\section{Introduction}
\subsection{Motivation}
In this paper we study the heat equation with the Caputo time derivative in a non cylindrical domain. It is intended to be the first in a series devoted the Stefan problem with fractional derivatives. We address here a simple, yet non-trivial question of the existence of solutions to the problem where the interfacial curve is given to us.

The interest in fractional PDE's stems from many sources. One of them is the theory of stochastic processes admitting jumps and continuous paths, see \cite{poincare}, \cite{bkm}, \cite{klafter}. Our motivation comes from phenomenological models of sediment transport, see \cite{Foufoula}, \cite{Ganti}, \cite{Paola}, \cite{Schumer}. Apparently, this problem awaits a systematic treatment.

In these problems, the position of the advancing front $s$ is not known, i.e., it is a part of the problem. Here, we consider a simplified situation,
\eqq{ \left\{
\begin{array}{rclcc}
\das u (x,t) &=& u_{xx}(x,t)+f(x,t) & \hd \mbox{ for } \hd &  0<x<s(t), \hd 0<t<T , \\
- u_{x}(0,t)&=&h(t)  & \hd \mbox{ for } \hd &  0<t<T,\\
 u(s(t),t)&=&0 & \hd \mbox{ for } \hd &  0<t<T,\\
u(x,0)&=&u_{0}(x) & \hd \mbox{ for } \hd & 0<x<b=s(0), \\
\end{array}
\right.
}{a}
where $h(t)$, $f(x,t)$, $u_{0}(x)$, $s$ are given functions and $s$ is nondecreasing.
By $\das$ we understand the Caputo fractional derivative, defined as follows
\eqq{\das w (x,t)= \left\{  \begin{array}{lll}
\jgja  \int_{0}^{t} (t-\tau)^{-\alpha}w_{t}(x,\tau) d \tau &  \mbox{ for } & x\leq s(0), \\ \\
\jgja  \int_{s^{-1}(x)}^{t} (t-\tau)^{-\alpha}w_{t}(x,\tau) d \tau &  \mbox{ for } & x>s(0).\\
 \end{array}
 \right. }{aa}

In the original free boundary problem the above system is augmented by an equation governing the evolution of $s$,
here $s$ is given to us. We assume that $s$ is increasing.
We will also use the following convention: if $x>b$, then $\sj = \max\{t: \hd s(t)=x \}$ and if $x\in [0,b] $, then $\sj =0$.

Experience with fractional derivatives tells us that tools used for PDE's require substantial modification before they can be applied to equations with fractional derivatives. In case of time fractional derivatives this means that we have to take into account the whole history of the process. This is the main difficulty of the analysis. This is why we set limited goals in this paper.

Our main result, expressed in Theorem \ref{main} below, is the existence of properly defined weak solutions to   (\ref{a}). We use for this purpose the Galerkin method. This is a  straightforward approach in case of PDE's, however, here it gets complicated when applied to (\ref{a}). Now, we briefly describe the content of our paper. In Section 2 we construct approximate solutions by means of the Galerkin method. In Section 3 we derive the necessary estimate and show that we can pass to the limit. This process yields a existence of a weak solution. Proofs of technical results are presented in the Appendix.

\subsection{Preliminaries and a definition of a weak solution}

Here, we will make our assumptions,  upheld throughout this paper. We will denote by $\alpha \in (0,1)$ the order of the Caputo derivative. By $s$ we denote the position of the interface. Its initial position $b=s(0)$ is positive. Function $s$ is not only increasing and continuous, but also
\eqq{ t \mapsto t^{1-\alpha}\dot{s}(t)\in C([0,T])\qquad \mbox{ and }\qquad \dot s \ge 0
.}{zalA}
With the help of $s$, we define a non cylindrical domain $\qst$ by the following formula,
\[
\qst=\{(x,\tau): \hd 0<x<s(\tau), \hd 0<\tau<t \}.
\]

The fractional derivative $D_s^\alpha$ is a complicated operator. In order to simplify the analysis, we introduce an auxiliary integral operator for functions
defined on the domain $\qst$ by the following formula,
\eqq{\ias w (x,t)= \left\{  \begin{array}{lll}
\jga  \int_{0}^{t} (t-\tau)^{\alpha-1}w(x,\tau) d \tau &  \mbox{ for } & x\leq s(0), \\ \\
\jga  \int_{s^{-1}(x)}^{t} (t-\tau)^{\alpha-1}w(x,\tau) d \tau &  \mbox{ for } & x>s(0).\\
 \end{array}
 \right. }{ab}
It is easy to verify that if $u(x,t) $ is an absolutely continuous function with respect to $t$ variable and it satisfies the boundary and initial conditions of (\ref{a}), then the following identity holds
\eqq{\das u(x,t)=\pt \ijas [u(x,t)-\tilde{u}_{0}(x)],}{ha}
where ${\tilde{u} }_{0}(x)$ will always denote the extension of $u_{0}(x)$ by zero. 
This equality suggests a weak form of the system (\ref{a}).

\begin{definition}\rm  Let us
assume that $f\in L^{2}(\qs)$, $h\in C^{1}([0,T])$ and $u_{0}\in L^{2}(0,b)$. A function  $u$ is \textit{a weak solution} of (\ref{a}) if $u,u_{x}\in L^{2}(\qs)$, $I^{1-\alpha}  \widetilde{u} (\cdot , t) \in L^{\infty}(0,T;L^{2}(0,s(T)))$
and  $u$ fulfills the identity
\[
-\int_{\qs}\ijas [u(x,t)-\tilde{u}_{0}(x)] \varphi_{t}(x,t) dxdt+\int_{\qs} u_{x}(x,t)\varphi_{x}(x,t)dxdt
\]
\eqq{=\int_{0}^{T} h(t)\varphi(0,t)dt+ \int_{\qs}f(x,t) \varphi(x,t)dxdt }{hb}
for all  $\varphi \in C^{1}(\overline{\qs})$ such that $\varphi(x,T)=0$ for  $x\in [0,s(T)]$ and  $\varphi(s(t),t)=0$ for $t\in [0,T]$.
\label{weakdef}
\end{definition}
\no Now we are ready to formulate our main result.
\begin{theorem} Let us
assume that $\alpha \in (0,1)$, $b>0$ and $s$ satisfies (\ref{zalA}). If $f\in L^{2}(\qs)$, $h\in C^{1}([0,T])$ and $u_{0}\in L^{2}(0,b)$ then, there exists  a weak solution of (\ref{a}).
\label{main}
\end{theorem}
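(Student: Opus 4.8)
The plan is to construct approximate solutions by a Galerkin scheme adapted to the moving domain, to derive energy estimates uniform in the discretization parameter, and then to pass to the limit in the weak formulation (\ref{hb}). First I would choose a time-dependent orthonormal basis of $L^{2}(0,s(t))$ compatible with the boundary conditions in (\ref{a}), namely the cosine modes
\[
\vn(x,t)=\qds\,\cln,\qquad \lan(t)=\frac{(2n+1)\pi}{2s(t)}.
\]
These satisfy the homogeneous conditions $\vnx(0,t)=0$ and $\vn(s(t),t)=0$, so that the Neumann datum $h$ will appear as a natural boundary term and the Dirichlet condition on the free boundary is respected. I would seek $\um(x,t)=\snm\cnmt\vn(x,t)$ and impose the finite-dimensional version of (\ref{hb}); integrating by parts in $x$ this is the projected identity
\[
\int_{0}^{s(t)}\das\um\,\vk\,dx+\int_{0}^{s(t)}\partial_x\um\,\vkx\,dx=h(t)\vk(0,t)+\int_{0}^{s(t)}f\,\vk\,dx
\]
for $k=0,\dots,m$. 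Because the basis itself depends on $t$, expanding $\das\um$ through (\ref{ha}) produces, besides the diagonal contributions, coupling terms carrying $\dot s$ via $\dot\lan$ and $\frac{d}{dt}\qds$; this is where the non-cylindrical geometry enters.

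Second, I would settle solvability of the approximate problem. The projected identity is a linear system of fractional integro-differential equations for $c(t)=(\cnmt)_{n=0}^{m}$, nonlocal in time through $\ijas$ and coupled as above. I would recast it as a fixed-point equation in a weighted space $X(t)$ with norm $\cct$ encoding the $t^{1-\alpha}$ behaviour dictated by (\ref{zalA}), and prove contraction on a short time interval using the smoothing of $\ias$ (which gains a factor $t^{\alpha}$). Existence and uniqueness on all of $[0,T]$ then follow by iterating over successive subintervals, since the contraction estimate is uniform.

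Third, I would derive the uniform a priori estimate. Testing the projected identity with $\cnmt$ and summing over $k$ amounts to testing the equation with $\um$ itself; integrating over $(0,t)$ the gradient term gives $\int_{0}^{t}\!\int_{0}^{s(\tau)}|\partial_x\um|^{2}$, the right-hand side is controlled by $f$, $h$ and Young's inequality, and the decisive term is $\int_{0}^{t}\langle\das\um,\um\rangle\,d\tau$. Here I would invoke a coercivity property of the Caputo derivative — a fractional analogue of $\int_{0}^{t}v_{\tau}v\,d\tau=\tfrac12(v(t)^{2}-v(0)^{2})$, ensuring that this bilinear form is bounded below, up to nonnegative terms, by a multiple of $\|\ima\widetilde{\um}(\cdot,t)\|_{L^{2}}^{2}$ — and combine it with a fractional Grönwall argument to obtain bounds on $\partial_x\um$ in $L^{2}(\qs)$ and on $\ima\widetilde{\um}$ in $L^{\infty}(0,T;L^{2})$ independent of $m$. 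The extra moving-domain terms are absorbed using $t^{1-\alpha}\dot s\in C([0,T])$ from (\ref{zalA}).

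Finally, the uniform bounds yield a subsequence with $\um\rightharpoonup u$ and $\partial_x\um\rightharpoonup u_{x}$ in $L^{2}(\qs)$, and $\ima\widetilde{\um}\rightharpoonup\ima\widetilde u$ weakly-$*$ in $L^{\infty}(0,T;L^{2}(0,s(T)))$. Since $\ias$ is linear and bounded, I can pass to the limit in the projected identity tested against any fixed finite combination of the $\vk$, recover (\ref{hb}) for such test functions, and conclude by density that $u$ is a weak solution in the sense of Definition \ref{weakdef}. I expect the main obstacle to be the a priori estimate of the third step: establishing the coercivity of $\das$ on the moving domain and controlling the additional terms created by the time dependence of the basis and by the $x$-dependent lower limit $s^{-1}(x)$ in (\ref{aa})–(\ref{ab}). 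The solvability of the fractional Galerkin system in the second step is also delicate, precisely because the moving geometry couples all modes through $\dot s$.
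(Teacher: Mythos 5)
Your proposal follows essentially the same route as the paper: the same moving cosine basis $\varphi_n(x,t)=\sqrt{2/s(t)}\cos\lambda_n(t)x$, solvability of the resulting fractional Galerkin system by a Banach fixed-point argument in a weighted space encoding the $t^{1-\alpha}$ behaviour (with extension to all of $[0,T]$ by iteration), a coercivity inequality for the Caputo derivative obtained by testing with $u_m$ itself, and passage to the limit by weak compactness together with the bound on $I^{1-\alpha}\widetilde{u}_m$ in $L^\infty(0,T;L^2)$. The only minor deviations are that the paper first lifts the Neumann datum via $v=u-h(t)\eta(x)$ so that no boundary term appears in the Galerkin identity, and it needs no fractional Gr\"{o}nwall lemma: after applying $I^{1}=I^{1-\alpha}I^{\alpha}$ to the coercivity inequality, the gradient term on the right-hand side is absorbed directly by Young's inequality.
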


The proof will be provided in Section \ref{s:w}. In its
first step we reduce the problem to the case of zero boundary condition of (\ref{a}). For this purpose we fix a smooth function $\eta=\eta(x) $ such that $\eta'(0)=1$ and  $\eta(x)=0$ for $x\geq b$. Then   $v(x,t):=u(x,t)-h(t)\eta(x)$ satisfies
\eqq{ \left\{
\begin{array}{rclcc}
\das v (x,t) &=& v_{xx}(x,t)+g(x,t) & \hd \mbox{ in } \hd &  \qs, \\
 v_{x}(0,t)&=&0  & \hd \mbox{ for } \hd &  0<t<T,\\
 v(s(t),t)&=&0 & \hd \mbox{ for } \hd &  0<t<T,\\
v(x,0)&=&v_{0}(x) & \hd \mbox{ for } \hd & 0<x<b, \\
\end{array}
\right.
}{hc}
where
\eqq{g(x,t)=f(x,t ) +h(t)\eta_{xx}(x)-\eta(x)\da h(t), \hd v_{0}=u_{0}-h(0)\eta(x).}{wd}
\no Therefore, we shall show that there exists $v(x,t)$ such that the identity
\eqq{-\int_{\qs}\ijas [v(x,t)-\tilde{v}_{0}(x)] \varphi_{t}(x,t) dxdt+\int_{\qs} v_{x}(x,t)\varphi_{x}(x,t)dxdt=\int_{\qs}g(x,t) \varphi(x,t)dxdt, }{hbb}
holds for each $\varphi$ as in the definition~\ref{weakdef}, where ${\tilde{v} }_{0}(x)$ is a continuation by zero of $v_{0}(x)$.\\

The proof of this Theorem is based on the Galerkin method. In Section \ref{s:a} we construct an approximate  solution. In Section \ref{s:w} we pass to the limit, after having derived necessary a priori estimates in Lemmas \ref{stwjeden} and \ref{oszac}. Many auxiliary facts used in our analysis are collected in the Appendix.

\section{Approximate solutions}\label{s:a}
We are looking for  approximate solutions of (\ref{hbb}) in the following form
\eqq{v_{m}(x,t) = \snm \cnmt \vn(x,t),   }{ec}
where for each $t\in [0,T]$ functions $\{ \vn (\cdot, t)\}_{n\in \en}$ constitute an orthonormal basis in $L^{2}(0,s(t))$. More precisely, we  define functions $\vn$ as follows:
\eqq{
- \varphi_{n,xx}(x,t)=\lambda^{2}_{n}(t)\vn(x,t), \hd  \varphi_{n,x}(0,t)=0, \hd \vn(s(t),t)=0.
}{defvn}
These conditions imply that $\vn(x,t)= \qds \cln$, where
\begin{equation}\label{def_lambdy}
 \lan(t)= \frac{\pi}{s(t)}[\frac{1}{2}+n].
\end{equation}
We also have
\eqq{\int_{0}^{s(t)} \vn(x,t)\cdot \vk(x,t) dx = \delta_{k,n}, }{ed}
\eqq{\int_{0}^{s(t)} \varphi_{n,x}(x,t) \cdot \varphi_{k,x}(x,t) dx = \lan^{2}(t)\delta_{k,n}. }{ee}
Subsequently, we define vector functions  $c_{m}(t)= (c_{0,m}(t), \dots , c_{m,m}(t))$ as solutions to the following system
\eqq{\izs \das v_{m}(x,t) \cdot \vk(x,t) dx + \izs v_{m,x}(x,t) \cdot \vkx(x,t) dx=\izs \gem(x,t) \vk(x,t) dx,  \hd t\in (0,T]  }{ef}
\eqq{
c_{k,m}(0)= \inb v_{0}(x)\vk(x,0)dx, \hd  \hd \hd k=0,\dots, m,
}{eff}
where
\[
\gem(x,t)= \snm \izs g^{\ep}(y,t)\vn(y,t)dy\vn(x,t).
\]
We assume that $g^{\ep}(x,t)$ is a smooth (with respect to time variable) approximation of  $g(x,t)$ \kom obtained by convolution of  mollifier and $\widetilde{g} $.
In this section we shall prove the following result.

\begin{theorem}\label{przyb}
Let us
assume that $s$ satisfies (\ref{zalA}) and $\alpha \in (0,1)$, $b>0$, $T>0$, in addition $f\in L^{2}(\qs)$, $h\in C^{1}([0,T])$ and $u_{0}\in L^{2}(0,b)$. Then, for each  $m\in \N $ there exists  $c_{m}(\dot)\in AC([0,T];\rr^{m}) $, satisfying (\ref{eff}), such that  $v_{m}$ given by  (\ref{ec}) satisfies  (\ref{ef}). Furthermore, $t^{1-\alpha} c_{m}'\in C([0,T];\rr^{m})$ and $t^{1-\alpha}v_{m,t} $ is continuous on $\overline{\qs} $.
\end{theorem}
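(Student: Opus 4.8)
The plan is to project the weak Galerkin identity (\ref{ef}) onto the basis, turning it into a fractional integro-differential system for the coefficient vector $c_m=(c_{0,m},\dots,c_{m,m})$, and then to solve that system by a fixed point argument in a space that builds in the expected $t^{1-\alpha}$ weighted regularity. First I would substitute (\ref{ec}) into (\ref{ef}). By (\ref{ee}) the elliptic term collapses to $\izs v_{m,x}\vkx\,dx=\lk^{2}(t)c_{k,m}(t)$, and by (\ref{ed}) together with the definition of $\gem$ the right-hand side collapses to $G^{\ep}_{k}(t):=\izs g^{\ep}(x,t)\vk(x,t)\,dx$, which is continuous on $[0,T]$. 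The fractional term is the one requiring work: writing $v_{m,\tau}=\snm(c_{n,m}'\vn+c_{n,m}\varphi_{n,\tau})$ and exchanging the order of the $x$- and $\tau$-integrations (the moving lower limit $s^{-1}(x)$ in (\ref{aa}) becomes, after Fubini, the $x$-range $(0,s(\tau))$), the quantity $\izs \das v_m\,\vk\,dx$ becomes
\[
\jgja\izt \ta\snm\big[c_{n,m}'(\tau)P_{nk}(\tau,t)+c_{n,m}(\tau)Q_{nk}(\tau,t)\big]\,d\tau,
\]
where $P_{nk}(\tau,t)=\izst\vn(x,\tau)\vk(x,t)\,dx$ and $Q_{nk}(\tau,t)=\izst\varphi_{n,\tau}(x,\tau)\vk(x,t)\,dx$.

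Since $P_{nk}(t,t)=\delta_{nk}$ by (\ref{ed}), subtracting and adding the matched-time value isolates the scalar Caputo derivative $\da c_{k,m}(t)$, and the remaining expression $R_{k}(t)$ collects the off-diagonal and time-difference overlaps. The key structural point is that every coefficient of $c_{n,m}'$ inside $R_k$ carries the factor $P_{nk}(\tau,t)-\delta_{nk}$, which vanishes at $\tau=t$, so those kernels are genuinely less singular than $\ta$. Thus (\ref{ef}) is equivalent to the system
\[
\da c_{k,m}(t)+\lk^{2}(t)c_{k,m}(t)+R_{k}(t)=G^{\ep}_{k}(t),\qquad k=0,\dots,m,
\]
to be solved with the initial data (\ref{eff}).

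Next I would recast this as a Volterra integral equation. Using that a Caputo problem $\da c=F$, $c(0)=c_{0}$, is equivalent to $c=c_{0}+\ia F$, the system becomes the fixed point equation $c_{k,m}=c_{k,m}(0)+\ia\big[G^{\ep}_{k}-\lk^{2}c_{k,m}-R_{k}\big]$, which I would solve in the Banach space $X(T_{1})=\{c\in C([0,T_{1}];\rr^{m+1}):t^{1-\alpha}c'\in C([0,T_{1}];\rr^{m+1})\}$ equipped with $\|c\|_{X(T_{1})}=\sup|c|+\sup t^{1-\alpha}|c'|$. This weighted space is dictated by the structure of the problem: $\pt\ia$ of a continuous function behaves like $t^{\alpha-1}$ near $t=0$, so $t^{1-\alpha}c'$ is exactly what stays continuous, which is also precisely the regularity asserted in the theorem. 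One checks that the map is a self-map of $X(T_{1})$ and a contraction provided $T_{1}$ is small, since each application of $\ia$ gains a positive power of $T_{1}$; the solution is then continued to the whole of $[0,T]$ by repeating the argument on successive subintervals, the linearity of the system (via a generalized Gronwall bound on $\|c_m\|_{X(T_1)}$) preventing blow-up in finite time.

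I expect the main obstacle to be the estimate of the remainder $R_{k}$ in the weighted norm. Both $Q_{nk}$ and $\partial_{\tau}P_{nk}$ involve $\dos(\tau)$, which by (\ref{zalA}) is only controlled after multiplication by $\tau^{1-\alpha}$ and hence behaves like $\tau^{\alpha-1}$ near $\tau=0$. The delicate part is therefore to show that the composite kernels, pairing the outer factor $\ta$ (and the extra $\tamj$ coming from $\ia$) with the inner singularity $\tau^{\alpha-1}$ carried by $\dos$, still define bounded Volterra operators on $X(T_{1})$ whose operator norm tends to $0$ as $T_{1}\to0$; in particular the term containing $c_{n,m}'$ must be tamed by the vanishing of $P_{nk}-\delta_{nk}$ at $\tau=t$ (after, if necessary, an integration by parts in $\tau$). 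Once the fixed point is obtained, the representation $c_{k,m}=c_{k,m}(0)+\ia[\,\cdot\,]$ with continuous integrand gives $c_m\in AC([0,T];\rr^{m+1})$ and $t^{1-\alpha}c_m'\in C([0,T];\rr^{m+1})$; combining this with $t^{1-\alpha}\dos\in C([0,T])$, which controls $t^{1-\alpha}\varphi_{n,t}$, yields the continuity of $t^{1-\alpha}v_{m,t}=t^{1-\alpha}\snm(c_{n,m}'\vn+c_{n,m}\varphi_{n,t})$ on $\overline{\qs}$.
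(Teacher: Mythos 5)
Your plan coincides with the paper's own proof in all structural respects: you project onto the moving eigenbasis, isolate the scalar Caputo derivative by using that $P_{nk}(\tau,t)-\delta_{nk}$ (the paper's matrix $B(\tau,t)$) vanishes at $\tau=t$, collect the $\varphi_{n,\tau}$-overlaps (the paper's $D(\tau,t)$), invert via $I^{\alpha}$ into a Volterra fixed-point problem, and run a contraction argument in exactly the weighted space the paper calls $X(T_{1})$, with norm $\|c\|_{C}+\|t^{1-\alpha}c'\|_{C}$. The analytic content you defer --- proving that $t^{1-\alpha}(Pc)'$ is continuous up to $t=0$ and that the operator norm vanishes as $T_{1}\to 0$ --- is where the paper spends most of its effort (Lemmas \ref{lemone}, \ref{lemtwo}, \ref{lemtwod}, \ref{estiQjj}, \ref{estiQdd}), and it also requires a further splitting $D=\frac{\dot{s}}{s}\hat{D}+\widetilde{D}$, needed because $D$, unlike $B$, does \emph{not} vanish on the diagonal; flagging this as ``the main obstacle'' is fair for an outline, but it is not routine.

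The genuine gap is the globalization step. You propose to ``repeat the argument on successive subintervals,'' with linearity plus ``a generalized Gronwall bound'' preventing blow-up. For this problem that reasoning does not apply as stated: all the fractional operators are based at $t=0$, so the equation on $[t_{*},T]$ contains history integrals $\int_{0}^{t_{*}}$ of the already-constructed solution, and the pointwise value $c_{m}(t_{*})$ does not encode that history --- restarting a fresh initial-value problem at $t_{*}$ is not the same equation, and blow-up is not the issue anyway since the system is linear. The paper makes precisely this point (``we cannot use directly the approach applicable to ODE's'') and resolves it in Lemma \ref{konti}: one works in the affine set $Y_{t_{*}}(T)$ of functions in $X(T)$ that agree with the known solution on $[0,t_{*}]$, splits the fixed-point map as $(\bar{P}y)(t)+(P^{t_{*}}x)(t)$ with $\bar{P}y$ a frozen history term and $P^{t_{*}}$ acting only on the extension $x$, checks that on $Y_{t_{*}}(T)$ the $X(T)$ norm is equivalent to the $C^{1}([t_{*},T])$ norm, and proves $\|P^{t_{*}}x\|\leq c_{0}\|x\|(T-t_{*})^{\alpha}$ with $c_{0}$ independent of $t_{*}$, so that a step size bounded below by a universal constant covers $[0,T]$ in finitely many steps. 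Your outline can be repaired along these lines, but as written the continuation is the missing idea rather than a routine iteration; a smaller omission of the same kind is that, once the fixed point of the integral equation is found, one must still justify applying $D^{\alpha}$ to recover (\ref{ef}), which the paper does via the regularity $D^{\alpha}c_{m}\in C^{0,1-\alpha}_{loc}((0,T])$ supplied by Lemma \ref{lemthree}.
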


\begin{remark}\rm
System (\ref{ef}) can be written in a form suitable for $c_{m}(t)$, which are only continuous. Then, the proof of the existence of continuous solutions $c_{m}(t)$ is much simpler than presented below because it is sufficient to assume absolute continuity of $s$. However, in this case,  we are not able to deduce absolute continuity of solutions $c_{m}(t)$. As a result, we are not able to obtain a priori estimates for an approximate solution (see Lemma~\ref{stwjeden}). In order to obtain them, we were forced to assume that $t^{1-\alpha}\dot{s}(t) $ is continuous and we are looking for $c_{m}(t)$ in a special function space.
\end{remark}

The proof of Theorem~\ref{przyb} will be divided into several steps, each is contained in a separate subsection. At the first stage, see \S \ref{ss:1}, we set up the auxiliary integral equation for the appropriate solutions. Next, in \S 2.2 we find a suitable function space. In \S 2.3 we show existence of local in time solution to the system introduced in \S \ref{ss:1}. Finally, in \S 2.4 we show its global solvability.

\paragraph{Notation}
We
denote by $c_{0}$  a generic constant depending on $b$, $m$, $\alpha$, $T$, $\max\{ |s(t)|:\ t\in [0,T] \}$ and  $\max\{  t^{1-\alpha} \dot{s} (t): t\in [0,T] \} $.

\subsection{Reformulation of system (\ref{ef})}\label{ss:1}
In order to streamline the argument and to make it transparent, we moved a number of estimates to the Appendix. We also recall there the definition of the Euler Beta function, (\ref{dodb}), and its relation $\Gamma$ function, (\ref{dodc})

We note that once we assume (\ref{zalA}), then for each $x$ functions $\vn(x,\cdot)$ are absolutely continuous.
Under the assumption that $c_{m}$ are absolutely continuous, the system (\ref{ef}) with $v_{m}$ given by (\ref{ec}) can be equivalently written as follows,
\[
\snm \izs \das \left[\cnmt \vn(x,t) \right]\cdot \vk(x,t) dx + \lk^{2}(t) c_{k,m}(t)=\izs g^{\ep}(x,t)\vk(x,t)dx,
\]
where we used (\ref{ed}) and (\ref{ee}). Thus,
\[
\snm \izt \izst \ta c_{n,m}'(\tau) \cdot \vn(x,\tau) \cdot \vk (x,t) dx \dt
\]
\[
+\snm \izt \izst \ta c_{n,m}(\tau) \cdot \left( \vn(x,\tau) \right)_{\tau}  \cdot \vk(x,t) dx \dt
\]
\[
+\gja \lk^{2}(t) c_{k,m}(t)=\gja\izs g^{\ep}(x,t)\vk(x,t)dx.
\]
Hence, making use of (\ref{ed}), we obtain
\[
 \izt  \ta c_{k,m}'(\tau) \dt
+\snm \izt  \ta c_{n,m}'(\tau) \izst \vn(x,\tau)\left[ \vk(x,t) -\vk(x,\tau) \right]   dx \dt
\]
\[
+\snm \izt  \ta c_{n,m}(\tau) \izst \left( \vn(x,\tau) \right)_{\tau}  \cdot \vk(x,t) dx \dt
\]
\[
+\gja \lk^{2}(t) c_{k,m}(t)=\gja\izs g^{\ep}(x,t)\vk(x,t)dx.
\]
In order to make further computation more transparent, we introduce the following notation,
\[
B(\tau, t)_{n,k}= \izst \vn(x,\tau)\left[ \vk(x,t) -\vk(x,\tau) \right] dx \hd \hd \mbox{for } \hd 0\leq \tau \leq t \leq T,
\]
\[
D(\tau, t)_{n,k}= \izst \left( \vn(x,\tau)\right)_{\tau}  \cdot \vk(x,t) dx \hd \hd \mbox{for } \hd 0\leq \tau \leq t \leq T,
\]
\[
E(t)_{n,k}=\gja \lk^{2}(t) \delta_{n,k} \hd \hd \mbox{for } \hd 0\leq  t \leq T,
\]
\[
G^{\ep}(t)_{k}=\gja\izs g^{\ep}(x,t)\vk(x,t)dx \hd \hd \mbox{for } \hd 0 \leq t \leq T.
\]
With this notation
system (\ref{ef}) may  be written in a more compact way as follows,
\eqq{
\izt \ta c_{m}'(\tau) \dt+ \izt \ta B(\tau, t )c_{m}'(\tau) \dt +
\izt \ta D(\tau, t) c_{m}(\tau ) \dt + E(t) c_{m}(t)=G^{\ep}(t).
}{defdo}
After having applied  $\ga I^{\alpha} $ to both sides of (\ref{defdo}) and using (\ref{doda}), we arrive at
\[
c_{m}(t)=c_{m}(0)-\ca \izt \tamj \izta\tap B(p, \tau )c_{m}'(p) dp \dt
\]
\eqq{ -\ca\izt \tamj \izta \tap D(p,\tau) c_{m}(p ) dp \dt -  \ca\izt \tamj   E(\tau) c_{m}(\tau)\dt+\jgja \ia \get,}{ehhh}
 where
 \begin{equation}\label{calfa}
 \ca=\jgaa .
 \end{equation}
We 
decompose function $D$, which will allow us to obtain useful estimates.
\[
D(\tau, t)_{n,k} 
=\izst \left( \vn(x,\tau) \right)_{\tau}  \cdot \vk(x,\tau) dx
+\izst \left( \vn(x,\tau) \right)_{\tau}  \cdot \left[\vk(x,t)- \vk(x,\tau) \right] dx.
\]
Using (\ref{defvn}) and (\ref{ed}), we have
\[
\izst \left( \vn(x,\tau) \right)_{\tau}  \cdot \vn(x,\tau) dx
=\frac{1}{2} \left[ \underbrace{\izst  \vn^{2}(x,\tau)  dx}_{=1}
\right]_{\tau}
-\frac{\dot{s}(\tau)}{2}   \underbrace{\vn^{2}(s(\tau),\tau)}_{=0} =0.
\]
For  $k \not = n$, we get
\[
\izst \left( \vn( x,\tau) \right)_{\tau}  \cdot \vk(x,\tau) dx=  \frac{\dot{s}(\tau)}{s(\tau)}\hat{D}_{n,k}  ,
\]
for a constant matrix $\hat{D}$. Therefore, we have
\eqq{
D(\tau, t)_{n,k}= \frac{\dot{s}(\tau)}{s(\tau)}\hat{D}_{n,k} +\widetilde{D}(\tau, t)_{n,k},
}{rozkD}
where
\eqq{
\widetilde{D}(\tau, t)_{n,k}=\izst \left( \vn(x,\tau) \right)_{\tau}  \cdot \left[\vk(x,t)- \vk(x,\tau) \right] dx.
}{Dtil}
Taking into account (\ref{rozkD}) and the properties of the Beta function,
we can write system (\ref{ehhh}) in this way
\[
c_{m}(t)=c_{m}(0)-\ca \izt \tamj \izta\tap B(p, \tau )c_{m}'(p) dp \dt  -\izt  \frac{\dot{s}(p)}{s(p)} \hat{D} c_{m}(p)dp
\]
\eqq{ -\ca\izt \tamj \izta \tap \widetilde{D}(p,\tau) c_{m}(p ) dp \dt -  \ca\izt \tamj   E(\tau) c_{m}(\tau)\dt+\jgja \ia \get.}{eh}
Our goal is to show that integral equation (\ref{eh}) has an absolutely continuous solution $c_m$, such that $t^{1-\alpha}c_{m}'(t)$ is continuous.
\begin{remark}\rm
If $s$ is constant, i.e. $s(t)\equiv b$ (in this case $\qs$ is a cylindrical domain), then equation (\ref{eh}) significantly simplifies,  because in this situation $B\equiv 0$, $\widetilde{D}\equiv 0$, $\dot{s}=0$ and $E$ is a constant matrix.
\end{remark}

We shall prove the existence of a solution of (\ref{eh}) by applying the Banach fixed point theorem on an appropriate space. For this purpose we collect here  estimates of $B$,  $\widetilde{D}$ and $E$, which we will use later on.

\begin{corollary}\label{c:d1}
Let us assume that $s\in AC([0,T])$, $\dot{s}\geq 0$ and $s(0)=b$, then
\eqq{|B(\tau, t)| \leq c_{0}[s(t)-s(\tau)],}{eg}
\eqq{|B_{\tau}(\tau, t)| \leq c_{0}\dot{s}(\tau),}{egp}
\eqq{|B_{t}(\tau, t)| \leq c_{0}\dot{s}(t),}{egt}
\eqq{|B(\tau,t_{2})-B(\tau, t_{1})| \leq  c_{0} |s(t_{2} ) - s(t_{1})|,}{kc}
\eqq{|\widetilde{D}(\tau , t )| \leq c_{0} \dot{s} (\tau) [s(t)- s(\tau)],}{ja}
\eqq{|\widetilde{D}_{t}(\tau , t )| \leq c_{0} \dot{s} (\tau) \dot{s}(t),}{jb}
\eqq{|E'(t)| \leq c_{0} \dos(t),}{oi}
hold a.e., where $c_0$ depends only on $m,b,\alpha, \| t^{1-\alpha}\dot{s} \|_{C[0,T]} $  and $T$.
\end{corollary}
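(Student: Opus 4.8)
My plan is to reduce all seven inequalities to a single pointwise bound on the time derivative of the eigenfunctions, after which each estimate becomes a routine insertion into a finite integral. So I would first differentiate $\vk(x,t)=\qds\cos(\lk(t)x)$ in $t$. Using $\lk(t)=\frac{\pi}{s(t)}(k+\frac12)$, hence $\lk'(t)=-\frac{\dos(t)}{s(t)}\lk(t)$, I get
\[
\partial_t\vk(x,t)=-\frac{\dos(t)}{2s(t)}\vk(x,t)+\frac{\dos(t)}{s(t)}\lk(t)x\,\qds\,\sin(\lk(t)x).
\]
Every term carries the factor $\dos(t)$, and the remaining quantities are uniformly bounded: $s(t)\ge b$ because $s$ is nondecreasing, $|\qds|\le\sqrt{2/b}$, and $\lk(t)x\le\lk(t)s(t)=\pi(k+\frac12)\le\pi(m+\frac12)$ for $x\in[0,s(t)]$ and $k\le m$. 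This yields a constant $C=C(m,b)$ with $|\partial_t\vk(x,t)|\le C\dos(t)$ (and the same bound for $\partial_\tau\vn$ and $\partial_\tau\vk$), valid wherever $\dos$ exists, i.e. a.e. — which is exactly why the conclusions are stated a.e.

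Next I would integrate this along the time variable and use $\dos\ge0$ to obtain the increment estimate
\[
|\vk(x,t)-\vk(x,\tau)|\le\int_\tau^t|\partial_\sigma\vk(x,\sigma)|\,d\sigma\le C\int_\tau^t\dos(\sigma)\,d\sigma=C[s(t)-s(\tau)].
\]
From here (\ref{eg}), (\ref{kc}), (\ref{ja}), (\ref{egt}) and (\ref{jb}) all drop out by substituting into the integral definitions of $B$ and $\df$ and integrating over $[0,s(\tau)]$: for (\ref{eg}) and (\ref{kc}) I bound $|\vn(x,\tau)|\le\sqrt{2/b}$ and apply the increment estimate to $\vk(x,t)-\vk(x,\tau)$; for (\ref{ja}) I combine $|\partial_\tau\vn|\le C\dos(\tau)$ with the same increment estimate; and for (\ref{egt}), (\ref{jb}) I differentiate under the integral in $t$ (only $\vk(x,t)$ depends on $t$) and insert $|\partial_t\vk|\le C\dos(t)$. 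The length $s(\tau)\le s(T)$ of the domain of integration is itself controlled by $b,T,\alpha$ and $\|t^{1-\alpha}\dos\|_{C[0,T]}$ via $s(T)=b+\int_0^T\sigma^{\alpha-1}\,\sigma^{1-\alpha}\dos(\sigma)\,d\sigma$, so every constant reduces to the quantities listed.

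The one estimate that needs an extra observation is (\ref{egp}). I would split
\[
B(\tau,t)_{n,k}=\izst\vn(x,\tau)\vk(x,t)\,dx-\izst\vn(x,\tau)\vk(x,\tau)\,dx,
\]
note that the second integral equals $\delta_{n,k}$ by (\ref{ed}) and is thus constant in $\tau$, and differentiate the first in $\tau$. The boundary term $\dos(\tau)\vn(s(\tau),\tau)\vk(s(\tau),t)$ vanishes because $\vn(s(\tau),\tau)=0$ by (\ref{defvn}), leaving exactly $\izst(\vn(x,\tau))_\tau\vk(x,t)\,dx=D(\tau,t)_{n,k}$; so $B_\tau=D$, and $|\partial_\tau\vn|\le C\dos(\tau)$ together with $|\vk|\le\sqrt{2/b}$ gives (\ref{egp}). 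Finally (\ref{oi}) is direct: from $E(t)_{n,k}=\gja\lk^2(t)\delta_{n,k}$ and $\lk'(t)=-\frac{\dos(t)}{s(t)}\lk(t)$ I obtain $E'(t)_{n,k}=-2\gja\lk^2(t)\frac{\dos(t)}{s(t)}\delta_{n,k}$, and $\lk^2(t)/s(t)\le\pi^2(m+\frac12)^2/b^3$ bounds this by $c_0\dos(t)$.

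I do not expect a genuine analytic obstacle: once $|\partial_t\vk|\le C\dos(t)$ is in place, everything is a finite integral over a bounded domain. The only thing requiring care is the bookkeeping of which operation releases a factor of $\dos$. Differentiating an eigenfunction in time does (the first display), and so does differentiating the moving upper limit $s(\tau)$; but the boundary contribution from that limit is annihilated by the Dirichlet condition $\vn(s(\tau),\tau)=0$, and the equal-time overlap $\izst\vn(x,\tau)\vk(x,\tau)\,dx$ is frozen at $\delta_{n,k}$ by orthonormality. Keeping these cancellations straight — and checking that all constants depend only on $m,b,\alpha,T$ and $\|t^{1-\alpha}\dos\|_{C[0,T]}$ — is essentially the whole task.
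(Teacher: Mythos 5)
Your proposal is correct, and it follows the same route the paper intends: the paper's entire proof of this corollary is ``we reach these conclusions by simple calculations,'' and your argument is precisely those calculations carried out in full, with the key bound $|\partial_t\varphi_k(x,t)|\le c_0\dot{s}(t)$, the resulting increment estimate $|\varphi_k(x,t)-\varphi_k(x,\tau)|\le c_0[s(t)-s(\tau)]$, and the observation that orthonormality plus the Dirichlet condition reduce $B_\tau$ to $D$ for (\ref{egp}). All steps and constant dependencies check out.
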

\begin{proof}
 We reach these conclusions by  simple calculations.
\end{proof}

\begin{corollary}\label{c:d2}
 We assume that $s\in AC([0,T])$, $s(0)=b$ and condition (\ref{zalA}) holds, then
 \eqq{|s(\tau)-s(p) | \leq c_{0}|\tau^{\alpha}-p^{\alpha}|\leq c_{0}|\tau-p|^{\alpha}, }{ib}
\eqq{|s(\tau)-s(p)| \leq c_{0}p^{\alpha-1}|\tau-p|, \hd \mbox{ for } p<\tau. }{od}
\end{corollary}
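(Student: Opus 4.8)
The plan is to exploit the absolute continuity of $s$ together with assumption (\ref{zalA}). Since $s\in AC([0,T])$ and $p<\tau$, I would start from the representation
\begin{equation}
s(\tau)-s(p)=\int_{p}^{\tau}\dot{s}(r)\,dr,
\end{equation}
and then control the integrand. Writing $M:=\| t^{1-\alpha}\dot{s}\|_{C[0,T]}$, which is finite by (\ref{zalA}) and is precisely one of the quantities on which $c_{0}$ is permitted to depend, I obtain the pointwise majorization $\dot{s}(r)=r^{\alpha-1}\cdot\bigl(r^{1-\alpha}\dot{s}(r)\bigr)\le M\,r^{\alpha-1}$ for a.e.\ $r$. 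Since moreover $\dot{s}\ge 0$, both assertions reduce to elementary estimates of $\int_{p}^{\tau}r^{\alpha-1}\,dr$.

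For (\ref{ib}), integrating gives $\int_{p}^{\tau}r^{\alpha-1}\,dr=\alpha^{-1}(\tau^{\alpha}-p^{\alpha})$, whence $|s(\tau)-s(p)|\le (M/\alpha)\,|\tau^{\alpha}-p^{\alpha}|$, which is the first inequality with $c_{0}=M/\alpha$. The remaining inequality $|\tau^{\alpha}-p^{\alpha}|\le|\tau-p|^{\alpha}$ is the subadditivity of $r\mapsto r^{\alpha}$ on $[0,\infty)$ for $\alpha\in(0,1)$: by concavity, $\tau^{\alpha}=\bigl(p+(\tau-p)\bigr)^{\alpha}\le p^{\alpha}+(\tau-p)^{\alpha}$, so that $\tau^{\alpha}-p^{\alpha}\le(\tau-p)^{\alpha}$.

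For (\ref{od}), I use instead that $r\mapsto r^{\alpha-1}$ is nonincreasing, since $\alpha-1<0$; hence $r^{\alpha-1}\le p^{\alpha-1}$ throughout $[p,\tau]$, and therefore $\int_{p}^{\tau}r^{\alpha-1}\,dr\le p^{\alpha-1}(\tau-p)$. This yields $|s(\tau)-s(p)|\le M\,p^{\alpha-1}|\tau-p|$, which is exactly (\ref{od}) with $c_{0}=M$.

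I do not expect any serious obstacle here; the computation is entirely routine once the representation and the bound on $\dot{s}$ are in place. The only step genuinely requiring the full strength of (\ref{zalA}), rather than mere absolute continuity of $s$, is the majorization $\dot{s}(r)\le M\,r^{\alpha-1}$. It is this integrable singularity $r^{\alpha-1}$ at the origin that produces both the H\"older-type bound (\ref{ib}) and the weighted Lipschitz bound (\ref{od}), and it is the reason the authors impose continuity of $t^{1-\alpha}\dot{s}$ in the first place.
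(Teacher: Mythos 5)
Your proof is correct and is essentially the argument the paper intends: the authors dismiss Corollary \ref{c:d2} with ``by inspection we establish validity of these statements,'' and your computation---writing $s(\tau)-s(p)=\int_p^\tau \dot s(r)\,dr$, majorizing $\dot s(r)\le M r^{\alpha-1}$ via (\ref{zalA}), and integrating---is the routine calculation they omit. The auxiliary facts you invoke (subadditivity of $r\mapsto r^{\alpha}$ and monotonicity of $r\mapsto r^{\alpha-1}$) are exactly what is needed, so there is no gap.
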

\begin{proof}
 Also in this case  by  inspection we establish validity of these statements.
\end{proof}

We draw further conclusions from Corollaries \ref{c:d1} and \ref{c:d2}. Namely,
(\ref{eg}), (\ref{ja}) and (\ref{od}) imply that
\eqq{\lim_{t \rightarrow \tau^{+}} (t-\tau)^{-\alpha} B (\tau,t) = 0 \hd \mbox{ for } \tau>0,   }{of}
\eqq{\lim_{t \rightarrow \tau^{+}} (t-\tau)^{-\alpha} \df (\tau,t) = 0 \hd \mbox{ for } \tau>0.   }{og}

\subsection{Our choice of the function space}
After these preparations we define a function space, where we shall look for a solution to (\ref{eh}). For $T>0$ and $c_m(0)$ as in (\ref{eff}) we set,
\eqq{
X(T)= \{c\in C^{1}((0,T];\mathbb{R}^{m}): \hd c(0)=c_{m}(0), \hd t^{1-\alpha}c'(t)\in C([0,T];\mathbb{R}^{m}) \}.
}{defXT}
It is easy to check that this space endowed with the natural  norm
\[
\| c \|_{X(T)} = \| c \|_{C([0,T])}+ \| t^{1-\alpha} c'\|_{C([0,T])}
\]
is a Banach space.

\begin{remark}\rm Having introduced the norm of $X(T)$, we may write that by $c_0$ we denote a generic constant
$c_0 = c_0(b,m, \alpha, T, \|s\|_{X(T)})$. 
\end{remark}

For each
$c\in X(T)$, we define a function
\[
(Pc)(t)=c_m(0)-(P_{1}c)(t)-(P_{2}c)(t)-(P_{3}c)(t)-(P_{4}c)(t)+\jgja \ia \get,
\]
where
\[
(P_{1}c)(t)=\ca \izt \tamj \izta\tap B(p, \tau )c'(p) dp \dt,
\]
\[
(P_{2}c)(t)= \izt  \frac{\dot{s}(p)}{s(p)} \hat{D} c(p)dp,
\]
\[
(P_{3}c)(t)=\ca\izt \tamj \izta \tap \widetilde{D}(p,\tau) c(p ) dp \dt,
\]
\[
(P_{4}c)(t)=  \ca\izt \tamj   E(\tau) c(\tau)\dt
\]
and $c_\alpha$ is given by (\ref{calfa}).
With this definition of $P$, we can concisely write (\ref{eh}) as
\begin{equation}\label{r:ca}
 c_m(t) = c_m(0) + (P c_m)(t), \qquad t\in [0,T].
\end{equation}

We shall see that $P$ maps $X(T)$ into itself. This is done in Lemma  \ref{defX} below. Then, in next subsection,
we shall show that for a sufficiently small $T$ operator  $P$ will be a contraction. Thus, by Banach Theorem, operator  $P$ 
has a unique fixed point.

In order to proceed, we state estimates frequently used to establish Lemma  \ref{defX}. They are
shown in the Appendix.

\begin{lemma}
If $f\in AC[0,T]$, then $(\ia f)(t)\in AC[0,T]$ and $(\ia f)'(t)=\ia f'(t) + t^{\alpha-1}\frac{f(0)}{\ga}$.
\label{lemone}
\end{lemma}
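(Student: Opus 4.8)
The plan is to reduce the statement to the semigroup property of the Riemann--Liouville integral combined with an elementary evaluation of $\ia$ applied to a constant. First I would use the absolute continuity of $f$ to write $f(\tau)=f(0)+\int_{0}^{\tau}f'(\sigma)\,d\sigma$ for $\tau\in[0,T]$, where $f'\in L^{1}(0,T)$. By linearity of $\ia$ this splits the fractional integral as
\[
(\ia f)(t)=f(0)\,(\ia 1)(t)+\big(\ia I^{1}f'\big)(t),\qquad (I^{1}g)(\tau):=\int_{0}^{\tau}g(\sigma)\,d\sigma.
\]
The first term is computed by direct integration, $(\ia 1)(t)=\jga\izt\tamj\dt=t^{\alpha}/\Gamma(\alpha+1)$.

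Next I would establish the semigroup identity $\ia I^{1}g=I^{1}\ia g=I^{\alpha+1}g$. Both equalities follow from Fubini's theorem applied to the relevant double integral: since $g=f'\in L^{1}(0,T)$ and the kernel $\tamj$ is integrable near $\tau=t$, the integrand is absolutely integrable and the order of integration may be interchanged; performing the inner integration of the kernel (which produces a power $(t-\sigma)^{\alpha}/\alpha$) yields $I^{\alpha+1}$ in both cases. Applying this with $g=f'$ gives the representation
\[
(\ia f)(t)=f(0)\,\frac{t^{\alpha}}{\Gamma(\alpha+1)}+\izt (\ia f')(\sigma)\,d\sigma.
\]

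From this representation both the regularity and the derivative can be read off. The power $t^{\alpha}$ is absolutely continuous on $[0,T]$ because $\alpha>0$, while the second summand is the indefinite integral of $\ia f'$, which lies in $L^{1}(0,T)$ (fractional integration maps $L^{1}$ into itself, again by Fubini); hence each summand, and therefore $\ia f$, belongs to $AC[0,T]$. Differentiating almost everywhere and using $\Gamma(\alpha+1)=\alpha\,\ga$ gives
\[
(\ia f)'(t)=f(0)\,\frac{t^{\alpha-1}}{\ga}+(\ia f')(t),
\]
which is exactly the asserted formula.

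The only genuine point requiring care is the interchange of the order of integration in the semigroup identity and the accompanying claim that $\ia f'\in L^{1}(0,T)$; both rest on the local integrability of the kernel $\tamj$ together with $f'\in L^{1}(0,T)$, so once these are justified the remaining steps are routine.
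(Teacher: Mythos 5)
Your proof is correct and follows essentially the same route as the paper: decompose $f$ via the characterization $f(\tau)=f(0)+\int_0^\tau f'(\sigma)\,d\sigma$ of absolutely continuous functions, write $(\ia f)(t)=(I^{\alpha+1}f')(t)+f(0)(\ia 1)(t)=I(\ia f')(t)+c_0 f(0)t^\alpha$ using the semigroup property, and differentiate. The paper states this in one line; you have merely supplied the routine justifications (Fubini for the semigroup identity, $\ia f'\in L^1$, absolute continuity of $t^\alpha$) that it leaves implicit.
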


\begin{lemma}
Let us assume that $p\mapsto p^{1-\alpha}w(p)\in L^{\infty}(0,T)$ and let
$$
g_{2}(\tau) =\izta \tap B (p,\tau) w(p) dp .
$$
Then, there exists an absolutely continuous representative of integrable function $g_2$. That is, there exists  $\tilde{g}_{2} \in AC[0,T]$ satisfying $\tilde{g}_{2}(0)=0$ and such that
$g_{2}=\tilde{g}_{2}$ a.e. on $[0,T]$.
\label{lemtwod}
\end{lemma}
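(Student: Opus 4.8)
The plan is to establish the claimed absolute continuity of
$$
g_{2}(\tau) =\izta \tap B (p,\tau) w(p) dp
$$
by exhibiting an explicit primitive-type formula and verifying it defines a function in $AC[0,T]$ that agrees with $g_2$ almost everywhere. The main structural observation is that the kernel $B(p,\tau)$ vanishes in a controlled way as $p\to\tau^{-}$: by estimate (\ref{eg}) of Corollary~\ref{c:d1} combined with (\ref{ib}), we have $|B(p,\tau)|\le c_{0}[s(\tau)-s(p)]\le c_0|\tau-p|^{\alpha}$, so the singularity $(\tau-p)^{-\alpha}$ of the factor $\tap$ is exactly absorbed, and the integrand is in fact integrable without any principal-value interpretation. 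This is precisely the content of the limit (\ref{of}). Thus $g_2(\tau)$ is well-defined and finite for every $\tau\in[0,T]$, and $g_2(0)=0$ since the integration range collapses.

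First I would define the candidate representative $\tilde g_2$ to be $g_2$ itself (once shown finite everywhere) and prove $\tilde g_2\in AC[0,T]$ by producing an $L^1$ function $G$ with $g_2(\tau)=\int_0^\tau G(r)\,dr$. To find $G$, I would differentiate under the integral sign formally: the $\tau$-dependence of $g_2$ enters through the upper limit $\tau$, through the kernel factor $\tap=(\tau-p)^{-\alpha}$, and through $B(p,\tau)$. The boundary term from the upper limit is harmless because the integrand tends to $0$ as $p\to\tau$ by (\ref{of}). The remaining contributions are
$$
-\alpha\izta (\tau-p)^{-\alpha-1}B(p,\tau)w(p)\,dp
\quad\text{and}\quad
\izta \tap B_{\tau}(p,\tau)w(p)\,dp,
$$
and the key estimates are (\ref{egp}), giving $|B_\tau(p,\tau)|\le c_0\dot s(\tau)$, and again (\ref{eg}) together with (\ref{od}), which yields $|B(p,\tau)|\le c_0 p^{\alpha-1}(\tau-p)$, so that $(\tau-p)^{-\alpha-1}|B(p,\tau)|\le c_0 p^{\alpha-1}(\tau-p)^{-\alpha}$.

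The main obstacle, and where I would spend the most care, is verifying the integrability in $r$ (i.e.\ that $G\in L^1(0,T)$) of these two pieces, while simultaneously justifying the interchange of differentiation and integration. Using $|w(p)|\le c_0 p^{\alpha-1}$ (from the hypothesis $p^{1-\alpha}w(p)\in L^\infty$) one is led, in the first piece, to a double integral of the type $\int_0^T\int_0^\tau p^{\alpha-1}(\tau-p)^{-\alpha}p^{\alpha-1}\,dp\,d\tau$, which after the substitution reducing the inner integral to a Beta integral (the relations (\ref{dodb}), (\ref{dodc}) recalled in the Appendix) produces a factor $\tau^{\alpha-1}$ that is integrable on $[0,T]$ since $\alpha>0$. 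The second piece is tamer because $\dot s(\tau)=\tau^{\alpha-1}\cdot(\tau^{1-\alpha}\dot s(\tau))$ with the second factor bounded by (\ref{zalA}). I would make the differentiation rigorous by a difference-quotient argument: form $\tfrac{1}{t_2-t_1}[g_2(t_2)-g_2(t_1)]$, split the integral over $[0,t_1]$ and the boundary strip $[t_1,t_2]$, use the above bounds to pass the limit by dominated convergence on the first part, and show the strip contribution vanishes using (\ref{of}). This identifies $g_2'=G$ a.e.\ with $G\in L^1$, whence $\tilde g_2:=g_2\in AC[0,T]$ with $\tilde g_2(0)=0$, completing the proof.
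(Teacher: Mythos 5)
Your overall strategy (differentiate $g_2$ in $\tau$ and show the formal derivative is in $L^1$) is the same engine that drives the paper's proof; the paper merely packages it distributionally, proving $g_2\in W^{1,1}(0,T)$ by testing against $\varphi\in C_0^\infty(0,T)$ (identity (\ref{noc})), using Fubini and a per-$p$ integration by parts for the kernel $\Phi_p(\tau)=(\tau-p)^{-\alpha}B(p,\tau)$, and then taking the absolutely continuous representative. However, your execution has a genuine gap at the decisive estimate. For the singular piece $\int_0^\tau (\tau-p)^{-\alpha-1}B(p,\tau)w(p)\,dp$ you combine $|B(p,\tau)|\le c_0 p^{\alpha-1}(\tau-p)$ (from (\ref{eg}) and (\ref{od})) with $|w(p)|\le c_0 p^{\alpha-1}$, arriving at the majorant $p^{2\alpha-2}(\tau-p)^{-\alpha}$. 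The inner integral
\[
\int_0^\tau p^{2\alpha-2}(\tau-p)^{-\alpha}\,dp \;=\; \tau^{\alpha-1}\int_0^1 u^{2\alpha-2}(1-u)^{-\alpha}\,du
\]
carries the scaling factor $\tau^{\alpha-1}$ you quote, but the constant $\int_0^1 u^{2\alpha-2}(1-u)^{-\alpha}du=B(2\alpha-1,1-\alpha)$ is finite only when $2\alpha-1>0$; for $\alpha\le\tfrac{1}{2}$ it diverges at $u=0$, so your claimed $L^1$ bound fails on half of the parameter range of the lemma. The alternative crude bound $|B(p,\tau)|\le c_0(\tau-p)^{\alpha}$ (from (\ref{ib})) fails as well, producing the majorant $(\tau-p)^{-1}p^{\alpha-1}$, which is not integrable in $\tau$. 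This is exactly why the paper interpolates between the two bounds, i.e. uses (\ref{oe}), $|s(\tau)-s(p)|\le c_0\, p^{\gamma(\alpha-1)}|\tau-p|^{\gamma+(1-\gamma)\alpha}$ with $0<\gamma<\frac{\alpha}{1-\alpha}$, which yields the majorant $p^{(\alpha-1)(\gamma+1)}(\tau-p)^{\gamma(1-\alpha)-1}$, integrable for \emph{every} $\alpha\in(0,1)$. Your treatment of the second piece (with $B_\tau$, via (\ref{egt})) is correct and agrees with the paper.

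There is also a secondary, logical gap at the end: even granting $G\in L^1$, the conclusion ``$g_2'=G$ a.e., hence $g_2\in AC[0,T]$'' is not valid, since a.e. differentiability with integrable derivative does not imply absolute continuity (the Cantor function is the standard counterexample). You must actually establish the integral identity $g_2(t_2)-g_2(t_1)=\int_{t_1}^{t_2}G(r)\,dr$, or equivalently that $G$ is the \emph{weak} derivative of $g_2$ --- which is what the paper does via Fubini and the absolute continuity of $\tau\mapsto(\tau-p)^{-\alpha}B(p,\tau)$ for each fixed $p$ --- and only then invoke the $W^{1,1}$ representative theorem.
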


Now, we are ready for the main result of this subsection.
\begin{lemma}
Let us take any $T>0$ and $c\in X(T)$, then  $Pc\in X(T)$.
\label{defX}
\end{lemma}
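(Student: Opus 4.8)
The plan is to verify the three requirements in the definition (\ref{defXT}) of $X(T)$ for the function $Pc$: that $Pc\in C^{1}((0,T];\rr^{m})$, that $(Pc)(0)=c_{m}(0)$, and that $t^{1-\alpha}(Pc)'\in C([0,T];\rr^{m})$. Since $Pc$ is the finite sum of the constant $c_{m}(0)$, the four pieces $P_{1}c,\dots,P_{4}c$, and the term $\jgja\ia\get$, it suffices to establish these properties for each summand and add the results. I would first record the consequence of $c\in X(T)$ that I use repeatedly: since $t^{1-\alpha}c'$ is continuous on $[0,T]$, one has $|c'(p)|\leq c_{0}\,p^{\alpha-1}$, so $c$ is absolutely continuous with $c(0)=c_{m}(0)$.

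Two groups of summands are routine. The constant $c_{m}(0)$ obviously lies in $X(T)$. For $\jgja\ia\get$ I would use that $g^{\ep}$ is smooth in time, so $\get\in AC[0,T]$ and Lemma~\ref{lemone} gives $(\ia\get)'(t)=\ia(G^{\ep})'(t)+t^{\alpha-1}G^{\ep}(0)/\ga$; hence $(\ia\get)(0)=0$ and $t^{1-\alpha}(\ia\get)'(t)\to G^{\ep}(0)/\ga$ as $t\to0^{+}$. For $P_{2}c$, assumption (\ref{zalA}) gives $\dot s(p)=p^{\alpha-1}\bigl(p^{1-\alpha}\dot s(p)\bigr)=O(p^{\alpha-1})$ and $s\geq b$, so $\tfrac{\dot s}{s}\hat{D}c$ is integrable; thus $(P_{2}c)(0)=0$, the derivative $(P_{2}c)'(t)=\tfrac{\dot s(t)}{s(t)}\hat{D}c(t)$ is continuous on $(0,T]$, and $t^{1-\alpha}(P_{2}c)'(t)=\tfrac{t^{1-\alpha}\dot s(t)}{s(t)}\hat{D}c(t)$ extends continuously to $[0,T]$ by (\ref{zalA}). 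For $P_{4}c$ I would write $(P_{4}c)(t)=\ca\ga\,\ia(Ec)(t)$; since $c$ and $E$ are both absolutely continuous and bounded (the latter because $s\in AC$ and $s\geq b$), the product $Ec\in AC[0,T]$, and Lemma~\ref{lemone} yields $(P_{4}c)'(t)=\ca\ga\,\ia(Ec)'(t)+\ca\,t^{\alpha-1}(Ec)(0)$. Estimate (\ref{oi}) together with $|c'|\leq c_{0}p^{\alpha-1}$ gives $(Ec)'(\tau)=O(\tau^{\alpha-1})$, and the Beta-function identity (\ref{dodb})--(\ref{dodc}), i.e. $\ia(\tau^{\alpha-1})(t)=\tfrac{\ga}{\Gamma(2\alpha)}t^{2\alpha-1}$, shows $t^{1-\alpha}\ia(Ec)'(t)=O(t^{\alpha})\to0$; hence $(P_{4}c)'$ is continuous on $(0,T]$ and $t^{1-\alpha}(P_{4}c)'(t)\to\ca(Ec)(0)$.

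The genuine difficulty lies in the doubly singular terms $P_{1}c$ and $P_{3}c$, which I regard as the main obstacle. For $P_{1}$ I would set $g_{2}(\tau)=\izta\tap B(p,\tau)c'(p)\,dp$; since $p\mapsto p^{1-\alpha}c'(p)\in L^{\infty}(0,T)$, Lemma~\ref{lemtwod} furnishes an absolutely continuous representative $\tilde g_{2}$ with $\tilde g_{2}(0)=0$ and $g_{2}=\tilde g_{2}$ a.e. Thus $(P_{1}c)(t)=\ca\ga\,\ia\tilde g_{2}(t)$, and because $\tilde g_{2}(0)=0$ the boundary term in Lemma~\ref{lemone} drops out, giving $(P_{1}c)'(t)=\ca\ga\,\ia\tilde g_{2}'(t)$. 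The remaining point is to control $\tilde g_{2}'$: using (\ref{eg}), (\ref{egp}), (\ref{egt}) and (\ref{ib})--(\ref{od}) (so that $\tap|B(p,\tau)|\leq c_{0}$ and the singularity of the integrand is only $p^{\alpha-1}$), one obtains $p^{1-\alpha}\tilde g_{2}'(p)\in L^{\infty}(0,T)$, whence the same Beta-function estimate shows $(P_{1}c)'$ is continuous on $(0,T]$ and $t^{1-\alpha}(P_{1}c)'(t)\to0$. The term $P_{3}c$ is treated identically with $B,c'$ replaced by $\widetilde D,c$ and with (\ref{ja}), (\ref{jb}) in place of the bounds on $B$; here $\tap|\widetilde D(p,\tau)|\leq c_{0}p^{\alpha-1}$ by (\ref{ja}) and (\ref{ib}), so the inner integral $g_{3}(\tau)=\izta\tap\widetilde D(p,\tau)c(p)\,dp$ again has an absolutely continuous representative vanishing at $0$.

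Finally I would add the pieces. Every summand except $c_{m}(0)$ vanishes at $t=0$, so $(Pc)(0)=c_{m}(0)$; $Pc$ is $C^{1}$ on $(0,T]$ as a finite sum of such functions; and $t^{1-\alpha}(Pc)'$ is continuous on $[0,T]$, being a finite sum of functions with continuous extensions to the origin. Therefore $Pc\in X(T)$. The hardest part, as indicated, is the differentiation of the weakly singular inner integrals $g_{2}$ and $g_{3}$ and the derivative estimates underlying Lemma~\ref{lemtwod} (and its $\widetilde D$-analogue), combined with the near-origin control of the outer fractional integral through the Beta-function identity.
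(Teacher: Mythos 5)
Your route is genuinely different from the paper's: the paper never works with a pointwise bound on $\tilde g_2'$, but instead proves continuity of $t^{1-\alpha}(P_1c)'$ and $t^{1-\alpha}(P_3c)'$ directly, by splitting the difference at two times $t_1<t_2$ into the terms $W_1,\dots,W_6$, comparing them with the auxiliary functions $Q_{1,1},Q_{1,2},Q_{2,1},Q_{2,2}$ of (\ref{qa})--(\ref{qg}), and invoking the limit Lemmas \ref{estiQjj} and \ref{estiQdd}. Your strategy (absolutely continuous representative, then a bound $|\tilde g_2'(\tau)|\leq c_0\tau^{\alpha-1}$, then regularity of the outer fractional integral) would in fact be cleaner and can be completed, but as written it has two gaps at exactly the critical points.

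First, the parenthetical mechanism you give for $p^{1-\alpha}\tilde g_{2}'(p)\in L^{\infty}(0,T)$ does not work. The a.e.\ derivative is $\tilde g_2'(\tau)=-\alpha\int_0^\tau \tapj B(p,\tau)c'(p)\,dp+\int_0^\tau \tap B_\tau(p,\tau)c'(p)\,dp$, and for the first term the bound $\tap|B(p,\tau)|\leq c_0$ (i.e.\ (\ref{eg}) with (\ref{ib})) leaves the integrand $c_0(\tau-p)^{-1}p^{\alpha-1}$, which is \emph{not} integrable near $p=\tau$; using (\ref{od}) alone instead leaves $(\tau-p)^{-\alpha}p^{2\alpha-2}$, which fails near $p=0$ whenever $\alpha\leq 1/2$. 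One must interpolate between (\ref{ib}) and (\ref{od}), i.e.\ use (\ref{oe}) with $0<\gamma<\min\{1,\alpha/(1-\alpha)\}$, exactly as in the appendix proofs of Lemmas \ref{lemtwo} and \ref{lemtwod}; this does yield $|\tilde g_2'(\tau)|\leq c_0\tau^{\alpha-1}$, so your claim is true but not for the reason you state. Second, and more seriously, even granting that bound, your inference ``whence the same Beta-function estimate shows $(P_1c)'$ is continuous on $(0,T]$'' is invalid: the Beta-function computation only gives the pointwise bound $|(I^\alpha\tilde g_2')(t)|\leq c_0t^{2\alpha-1}$, hence the limit $0$ of $t^{1-\alpha}(P_1c)'$ at the origin, and a pointwise bound on the fractional integral of a merely measurable function does not imply its continuity. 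The missing ingredient is precisely the paper's Lemma \ref{lemthree}: since $\tilde g_2\in AC[0,T]$, $\tilde g_2(0)=0$ and $t^{1-\alpha}\tilde g_2'\in L^{\infty}(0,T)$, that lemma gives $t^{1-\alpha}(I^\alpha\tilde g_2')(t)\in C^{0,\alpha}([0,T])$, which is what you need; the same remark applies to your treatment of $P_3$, $P_4$ and $I^\alpha G^{\ep}$ (for the last two the paper itself invokes Lemma \ref{lemthree}, which you never cite). With (\ref{oe}) and Lemma \ref{lemthree} inserted, your argument closes and bypasses the paper's $W_i$/$Q_{i,j}$ machinery; without them, the central continuity claim of the lemma is unsupported.
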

\begin{proof}
If
$c\in X(T)$, then we set
$$
\cP_i(t) = t^{1-\alpha} \left(P_{i}c  \right)'(t),\qquad i=1,\ldots,4.
$$
At first, we will show that
 \eqq{
 \cP_1, \cP_2, \cP_3, \cP_4
\in C([0,T];\rr^{m}). }{qj}
We consider each  $\cP_i$, $i=1,\ldots,4$ separately. However, we first record  for all elements $c\in X(T)$ the following bound
\eqq{|c'(p)|\leq \ccT p^{\alpha-1}.  }{pa}
\paragraph{Case of $\cP_{1}$.}
Our goal is to prove that
\eqq{
\cP_{1} \in C([0,T];\rr^{m}). }{qh}
 If we keep in mind Lemma~\ref{lemone} and  Lemma~\ref{lemtwod}, then it is enough to show  that
\eqq{t\mapsto \cP_{1,1}(t):=t^{1-\alpha} \izt \tamj \left[ \izta \tap B (p,\tau)  c'(p)dp \right]_{\tau} \dt \in C([0,T];\rr^{m}).}{poh}
In order to prove (\ref{poh}), we 
consider the difference, where  $t_{1}<t_{2}$,
\begin{eqnarray*}
 &&t^{1-\alpha}_{2} \iztd \tamjd \left[ \izta \tap B (p,\tau) c'(p)dp \right]_{\tau} \dt \\
 &&\quad -t^{1-\alpha}_{1} \iztj \tamjj \left[ \izta \tap B (p,\tau) c'(p)dp \right]_{\tau} \dt\\
 &=&t^{1-\alpha}_{2} \iztjd \tamjd \left[ \izta \tap B (p,\tau) c'(p) dp \right]_{\tau} \dt\\
 &&+t^{1-\alpha}_{2} \iztj [\tamjd -\tamjj] \left[ \izta \tap B (p,\tau) c'(p) dp \right]_{\tau} \dt \\
 &&\quad+(t^{1-\alpha}_{2} - t^{1-\alpha}_{1})\iztj \tamjj \left[ \izta \tap B (p,\tau) c'(p)  dp \right]_{\tau} \dt \\
 &\equiv& W_{1}(t_{1},t_{2})+W_{2}(t_{1},t_{2})+W_{3}(t_{1},t_{2}).
\end{eqnarray*}
By definition,  in the case of $t_{1}=0$ we have $W_{2}(0,t_{2})=0$ and $W_{3}(0,t_{2})=0$,  hence continuity of $\cP_{1,1}$ at $t=0$ will be shown if
$ \lim_{t_{2}\rightarrow 0 }W_{1}(0,t_{2})=0$.
For this purpose, we estimate $W_{1}$. With the help of (\ref{of}) we perform the differentiation with respect to $\tau$ to get,
\begin{eqnarray*}
&W_{1}(t_{1},t_{2})&=
t^{1-\alpha}_{2} \iztjd \tamjd \left[ \izta  \tap B (p,\tau) c'(p) dp \right]_{\tau} \dt\\
&
&=-\alpha t^{1-\alpha}_{2} \iztjd \tamjd  \izta \tapj B (p,\tau) c'(p)dp \dt
\\
&
&\quad+t^{1-\alpha}_{2} \iztjd \tamjd  \izta \tap B_{\tau} (p,\tau) c'(p) dp \dt \\
&&\equiv - \alpha W_{1,1}(t_{1},t_{2})+ W_{1,2}(t_{1},t_{2}).
\end{eqnarray*}
After having introduced the following functions,
\eqq{Q_{1,1}(t_{1},t_{2})=t^{1-\alpha}_{2} \iztjd \tamjd  \izta (\tau - p )^{-\alpha-1} [s(\tau)- s(p)] p^{\alpha-1}dp \dt,}{qa}
\eqq{Q_{1,2}(t_{1},t_{2})= t^{1-\alpha}_{2} \iztjd \tamjd \tau^{\alpha-1} \izta \tap p^{\alpha-1} dp \dt,}{qd}
then using (\ref{eg}), (\ref{jb}),  (\ref{pa}) and (\ref{zalA}) we notice,
\[
|W_{1,1}(t_{1},t_{2})|  \leq c_{0} \ccT Q_{1,1}(t_{1},t_{2}),
\]
\[
|W_{1,2}(t_{1},t_{2})|  \leq c_{0} \ccT Q_{1,2}(t_{1},t_{2}).
\]
Now, the estimate below follows from Lemma~\ref{estiQjj},
\[
\lim_{t_{2}\rightarrow t_{1}}|W_{1}(t_{1},t_{2})|\leq \lim_{t_{2}\rightarrow t_{1}}c_{0}\ccT (Q_{1,1}(t_{1},t_{2})+Q_{1,2}(t_{1},t_{2}))=0.
\]

Next, we shall deal with  $W_{2}$. 
While keeping in mind the positivity of $t_{1}$ and  (\ref{of}), we perform differentiation with respect to $\tau$. We obtain,
\begin{eqnarray*}
 &
W_{2}(t_{1},t_{2})&=
 t^{1-\alpha}_{2} \iztj [\tamjd -\tamjj] \left[\izta   \tap B (p,\tau)c'(p) dp \right]_{\tau} \dt
\\
&&=-\alpha t^{1-\alpha}_{2} \iztj [\tamjd -\tamjj] \izta   \tapj B (p,\tau) c'(p)dp  \dt
\\
&&\quad+t^{1-\alpha}_{2} \iztj [\tamjd -\tamjj] \izta   \tap (B (p,\tau))_{\tau} c'(p)dp  \dt \\
&&\equiv  -\alpha W_{2,1}(t_{1},t_{2})+W_{2,2}(t_{1},t_{2}).
\end{eqnarray*}
Similarly to the estimates of $W_1$, we introduce new functions,
\eqq{Q_{2,1}(t_{1},t_{2})=t^{1-\alpha}_{2} \iztj [\tamjj -\tamjd] \izta   \tapj p^{\alpha-1} [s(\tau)-s(p)]dp  \dt,}{qf}
\eqq{Q_{2,2}(t_{1},t_{2})= t^{1-\alpha}_{2} \iztj [\tamjj -\tamjd] \tau^{\alpha-1} \izta   \tap p^{\alpha-1} dp   \dt .}{qg}
From (\ref{eg}), (\ref{egt}), (\ref{pa}) and (\ref{zalA}) we deduce that
\[
|W_{2,1}(t_{1},t_{2})| \leq c_{0} \ccT Q_{2,1}(t_{1},t_{2}),
\]
\[
|W_{2,2}(t_{1},t_{2})| \leq c_{0} \ccT Q_{2,2}(t_{1},t_{2}).
\]
Thus,
Lemma~\ref{estiQdd} implies
\[
\lim_{t_{2}\rightarrow t_{1}^{+}}W_{2}(t_{1},t_{2})=0.
\]
Obviously, we also have
\[
\lim_{t_{2}\rightarrow t_{1}^{+}}W_{3}(t_{1},t_{2})=0,
\]
and in this way we proved (\ref{qh}).

\paragraph{Case of $\cP_{2}$.} Using (\ref{zalA}) and the continuity of $c$  we notice that
\[
\cP_{2}(t)= t^{1-\alpha} \frac{\dos (t)}{s(t)} \df c(t).
\]
Hence, $\cP_{2}\in C([0,T];\rr^{m})$.

\paragraph{Case of $\cP_{3}$.} For all
$c\in X(T)$  we notice an obvious bound,
\eqq{|c(p)|\leq \ccT .  }{pad}
\no We shall show that
\eqq{
\cP_3\in C([0,T];\rr^{m}). }{qhd}
Similarly to the case of $\cP_1$, if we keep in mind Lemma~\ref{lemone} and Lemma~\ref{lemtwo}, then it is enough to prove  that
\eqq{t\mapsto t^{1-\alpha} \izt \tamj \left[ \izta \tap \df (p,\tau)  c(p)dp \right]_{\tau} \dt=:\cP_{3,1} \in C([0,T];\rr^{m}).}{pohd}
We are going to show that the following difference converges to zero when $t_{2}\rightarrow t_{1}$.
\begin{eqnarray*}
&&t^{1-\alpha}_{2} \iztd \tamjd \left[ \izta \tap \df (p,\tau) c(p)dp \right]_{\tau} \dt\\
&&- t^{1-\alpha}_{1} \iztj \tamjj \left[ \izta \tap \df (p,\tau) c(p)dp \right]_{\tau} \dt
\\
&=&t^{1-\alpha}_{2} \iztjd \tamjd \left[ \izta \tap \df (p,\tau) c(p) dp \right]_{\tau} \dt
\\
&&+t^{1-\alpha}_{2} \iztj [\tamjd -\tamjj] \left[ \izta \tap \df (p,\tau) c(p) dp \right]_{\tau} \dt
\\
&&+(t^{1-\alpha}_{2} - t^{1-\alpha}_{1})\iztj \tamjj \left[ \izta \tap \df (p,\tau) c(p)  dp \right]_{\tau} \dt \\
&\equiv& W_{4}(t_{1},t_{2})+W_{5}(t_{1},t_{2})+W_{6}(t_{1},t_{2}).
\end{eqnarray*}
If $t_{1}=0$, then we have $W_{5}(0,t_{2})=0$ and $W_{6}(0,t_{2})=0$,  as a result $\cP_{3,1}$
is continuous at zero if and only if $ \lim_{t_{2}\rightarrow 0 }W_{4}(0,t_{2})=0$.
This is why we estimate $W_{4}$. We proceed as in the case of $W_{1}$ and $W_{2}$. We use (\ref{og}) to perform the differentiation with respect to $\tau$. Here is the result, 
\begin{eqnarray*}
&W_{4}(t_{1},t_{2})&=t^{1-\alpha}_{2} \iztjd \tamjd  \left[\izta   \tap \df (p,\tau) c(p) dp \right]_{\tau} \dt
\\
&&=-\alpha t^{1-\alpha}_{2} \iztjd \tamjd  \izta \tapj \df (p,\tau) c(p)dp \dt
\\
&&\quad+t^{1-\alpha}_{2} \iztjd \tamjd  \izta \tap \df_{\tau} (p,\tau) c(p) dp \dt \\
& &\equiv- \alpha W_{4,1}(t_{1},t_{2})+ W_{4,2}(t_{1},t_{2}).
\end{eqnarray*}
From (\ref{ja}), (\ref{jb}),  (\ref{pad}) and (\ref{zalA}) while keeping in mind the definitions of $ Q_{1,1}$ and $Q_{1,2}$, we obtain that
\[
|W_{4,1}(t_{1},t_{2})|  \leq c_{0} \ccT  Q_{1,1}(t_{1},t_{2}),
\]
\[
|W_{4,2}(t_{1},t_{2})|  \leq c_{0} \ccT Q_{1,2}(t_{1},t_{2}).
\]
Invoking (\ref{qb}) and (\ref{qe}) from Lemma~\ref{estiQjj} we get the following estimate
\[
\lim_{t_{2}\rightarrow t_{1}}|W_{4}(t_{1},t_{2})|\leq \lim_{t_{2}\rightarrow t_{1}}c_{0} \ccT(Q_{1,1}(t_{1},t_{2})+Q_{1,2}(t_{1},t_{2}))=0.
\]
Since $t_1$ may be zero we see that $\cP_{3,1}$ is continuous at $t=0$.

Now, we turn our attention to $W_{5}$. In this case we have $t_{1}>0$. We use (\ref{og}) to perform the differentiation with respect to $\tau$, resulting with
\begin{eqnarray*}
&W_{5}(t_{1},t_{2})&=
 t^{1-\alpha}_{2} \iztj [\tamjd -\tamjj]\left[ \izta   \tap \df (p,\tau)c(p) dp \right]_{\tau} \dt
\\
&&=-\alpha t^{1-\alpha}_{2} \iztj [\tamjd -\tamjj] \izta   \tapj \df (p,\tau) c(p)dp  \dt
\\
&&\quad
+t^{1-\alpha}_{2} \iztj [\tamjd -\tamjj] \izta   \tap (\df (p,\tau))_{\tau} c(p)dp  \dt \\
&&\equiv  -\alpha W_{5,1}(t_{1},t_{2})+W_{5,2}(t_{1},t_{2}).
\end{eqnarray*}
Making use of (\ref{ja}), (\ref{jb}), (\ref{pad}) and (\ref{zalA}) we arrive at the following estimates,
\[
|W_{5,1}(t_{1},t_{2})| \leq c_{0} \ccT Q_{2,1}(t_{1},t_{2}),
\]
\[
|W_{5,2}(t_{1},t_{2})| \leq c_{0} \ccT Q_{2,2}(t_{1},t_{2}).
\]
We finish estimating $W_5$ by invoking
Lemma~\ref{estiQdd} to obtain,
\[
\lim_{t_{2}\rightarrow t_{1}^{+}}W_{5}(t_{1},t_{2})=0.
\]
Since it is easy to see that
\[
\lim_{t_{2}\rightarrow t_{1}^{+}}W_{6}(t_{1},t_{2})=0,
\]
then we conclude the proof of (\ref{qhd}).

\paragraph{Case of $\cP_{4}$.} Since functions $s$ and $c$ are absolutely continuous, then due to (\ref{def_lambdy}) and the definition of $Ec$ we deduce that   $Ec\in AC$ too.  Since $P_4(t) = I^\alpha(Ec)$, then  by Lemma~\ref{lemone} we deduce that
\[
\cP_4\equiv t^{1-\alpha} (P_{4}c)'(t) = c_{0} t^{1-\alpha } \izt \tamj (E(\tau) c(\tau))' \dt + c_{0} E(0)c(0).
\]
We infer from (\ref{oi}) and (\ref{zalA}) that function $t\mapsto t^{1-\alpha}(Ec)'$ is essentially bounded, hence we may apply Lemma~\ref{lemthree} with $f=Ec$ to deduce that the $\cP_4$ is H\"older continuous on $[0,T]$, i.e. it has better regularity than required.

We also have to check continuity of $t\mapsto t^{1-\alpha}(\ia G^{\ep})'(t)$, where
function $G^{\ep}$ is smooth. This follows from Lemma~\ref{lemthree} applied to $G^{\ep}$. Thus, we finished the proof of (\ref{qj}).

The final step in the proof of $Pc\in X(T)$ is checking that $(Pc)(0) =c_m(0).$  For this purpose
we  shall check that $P_i(0) =0$, $i=1,\ldots,4$ as well as $I^\alpha G^\epsilon(0) =0$.

We notice that (\ref{eg}) and (\ref{ib}) imply
\begin{eqnarray*}
&|(P_{1}c)(t)| &\leq c_{0} \ccT \izt \tamj \izta \tap [s(\tau) -s(p)]p^{\alpha-1} dp\dt
\\
&&\leq c_{0} \ccT \izt \tamj \izta p^{\alpha-1} dp\dt \\
&&=c_{0} \ccT t^{2\alpha} \underset{t \rightarrow 0}{ \longrightarrow 0 }.
\end{eqnarray*}
In next step we see that we deduce from (\ref{zalA}) that
\[
|(P_{2}c)(t)|\leq c_{0} \ccT t^{\alpha} \underset{t \rightarrow 0}{ \longrightarrow 0 }.
\]
Estimates (\ref{ja})  and (\ref{ib}) permit us to bound $P_{3}$ as follows,
\begin{eqnarray*}
&|(P_{3}c)(t)| &\leq c_{0} \ccT \izt \tamj \izta \tap [s(\tau) -s(p)]\dos(p) dp\dt
\\
&&\leq c_{0} \ccT \izt \tamj \izta \dos(p)dp \dt \\
&&\leq c_{0} \ccT  t^{\alpha} [s(t)-s(0)] \underset{t \rightarrow 0}{ \longrightarrow 0 }.
\end{eqnarray*}

The remaining terms, $P_4 =I^\alpha(Ec)$ and $I^\alpha G^\epsilon$ tend to zero as $t$ goes to $0$, because they are fractional integrals of bounded functions. Hence, we conclude
that $(Pc)(0)=c_{m}(0)$ and Lemma~\ref{defX} is proved.
\end{proof}

\subsection{Operator $P$ is a contraction}
Here, we show that we can choose $T$ sufficiently small to make $P$ a contraction on $X(T)$.
\begin{lemma}\
There exists  $T_{0}\le T$, which depends only on $b,m,\alpha$, $T$  and $\| {s}\|_{X(T)}$ such that if  $T_{1}<T_{0}$, then  $P: X(T_{1})\rightarrow X(T_{1})  $ is contraction.
\label{kont}
\end{lemma}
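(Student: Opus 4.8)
The plan is to show that $P$ is a contraction on $X(T_1)$ for sufficiently small $T_1$ by estimating the $X(T_1)$-norm of the difference $P\bar c - Pc$ for two elements $\bar c, c \in X(T_1)$. Since $P$ is affine (the terms $c_m(0)$ and $\jgja\ia G^\ep$ do not depend on the argument), the difference $(P\bar c)(t) - (Pc)(t)$ equals $-\sum_{i=1}^4 (P_i \bar c - P_i c)(t)$, and each $P_i$ is linear in its argument. Hence by setting $d := \bar c - c \in X(T_1)$ with $d(0)=0$, it suffices to bound $\|P_i d\|_{X(T_1)} \le \theta_i(T_1) \|d\|_{X(T_1)}$ where $\theta_i(T_1) \to 0$ as $T_1 \to 0$; then choosing $T_1$ small enough that $\sum_i \theta_i(T_1) < 1$ finishes the proof via the Banach fixed point theorem. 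The $X(T_1)$-norm splits into the sup-norm of $P_i d$ and the sup-norm of $t^{1-\alpha}(P_i d)'$, so both pieces must be controlled.

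First I would treat the sup-norm part, $\|P_i d\|_{C([0,T_1])}$. These bounds have essentially already been computed at the end of the proof of Lemma~\ref{defX}: using (\ref{eg}), (\ref{ib}) for $P_1$, the assumption (\ref{zalA}) for $P_2$, and (\ref{ja}), (\ref{ib}) for $P_3$, one obtains estimates of the form $|(P_1 d)(t)| \le c_0 \|d\|_{X(T_1)} t^{2\alpha}$, $|(P_2 d)(t)| \le c_0 \|d\|_{X(T_1)} t^\alpha$, and $|(P_3 d)(t)| \le c_0 \|d\|_{X(T_1)} t^\alpha$. For $P_4 = \ca \izt \tamj E(\tau)c(\tau)\dt$, since $E$ is bounded and $d$ is bounded by $\|d\|_{X(T_1)}$, one gets $|(P_4 d)(t)| \le c_0 \|d\|_{X(T_1)} t^\alpha$. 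Each of these carries a positive power of $t\le T_1$, so each sup-norm contribution is bounded by $c_0 T_1^{\alpha}\|d\|_{X(T_1)}$ (or $T_1^{2\alpha}$), which tends to zero with $T_1$.

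The harder part, and the main obstacle, is the weighted-derivative piece $\|t^{1-\alpha}(P_i d)'\|_{C([0,T_1])}$, since the derivative of a fractional integral is delicate. Here I would reuse the decompositions from Lemma~\ref{defX}: by Lemma~\ref{lemone} and Lemma~\ref{lemtwod} (respectively Lemma~\ref{lemtwo}) the weighted derivatives $\cP_i$ are governed by the quantities $\cP_{1,1}$, $\cP_{3,1}$ and the integrals $Q_{1,1}, Q_{1,2}, Q_{2,1}, Q_{2,2}$ introduced there. Rather than only proving continuity (as in Lemma~\ref{defX}), I now need an explicit bound showing these expressions carry a vanishing power of $T_1$; the estimates from Lemma~\ref{estiQjj} and Lemma~\ref{estiQdd} in the Appendix should furnish $|Q_{i,j}(t_1,t_2)| \le c_0 T_1^{\gamma}$ with $\gamma>0$ uniformly, and the same $t^\alpha$- or $t^{2\alpha}$-type gains appear. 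For $\cP_4$, the Hölder continuity produced by Lemma~\ref{lemthree}, applied to the linear functional $Ed$ with $d(0)=0$, again yields a factor that is small for small $T_1$. Collecting all four, I obtain
\[
\|Pd\|_{X(T_1)} \le c_0\,(T_1^{\alpha}+T_1^{2\alpha})\,\|d\|_{X(T_1)},
\]
and since $c_0$ depends only on $b,m,\alpha,T,\|s\|_{X(T)}$ and not on $T_1$, I may fix $T_0\le T$ so small that $c_0(T_0^\alpha+T_0^{2\alpha}) < 1$; then for every $T_1 < T_0$ the map $P$ is a contraction on $X(T_1)$. The delicate point throughout is ensuring that the generic constant $c_0$ does not secretly depend on $T_1$ and that every estimate genuinely produces a strictly positive power of $T_1$, which is exactly what the uniform Appendix bounds on the $Q_{i,j}$ are designed to guarantee.
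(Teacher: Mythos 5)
Your proposal is correct and follows essentially the same route as the paper's proof: exploit linearity of the $P_i$, use the improved bounds $|d(p)|\le \|d\|_{X(T_1)}p^{\alpha}$ and $|d'(p)|\le \|d\|_{X(T_1)}p^{\alpha-1}$ that come from $d(0)=0$, recycle the sup-norm estimates appearing at the end of the proof of Lemma~\ref{defX}, and control the weighted derivatives $t^{1-\alpha}(P_id)'$ via Lemma~\ref{lemone}, Lemma~\ref{lemtwod} (resp.\ Lemma~\ref{lemtwo}) together with the appendix estimates on the $Q$-integrals, ending with $\|Pd\|_{X(T_1)}\le c_0T_1^{\alpha}\|d\|_{X(T_1)}$ and a choice of $T_0$ with $c_0T_0^{\alpha}<1$. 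One imprecision worth noting: Lemma~\ref{estiQdd} only provides the limits $\lim_{t_2\to t_1^+}Q_{2,i}(t_1,t_2)=0$, not uniform bounds of the form $c_0T_1^{\gamma}$, but this is harmless because the contraction estimate only requires bounding $\sup_{t\le T_1}|t^{1-\alpha}(P_id)'(t)|$, i.e.\ the $t_1=0$ instance of the decomposition, which involves only $Q_{1,1}(0,t)\le c_0[s(t)-s(0)]\le c_0T_1^{\alpha}$ and $Q_{1,2}(0,t)\le c_0t^{\alpha}$ from Lemma~\ref{estiQjj} (estimates (\ref{qc}) and (\ref{qe}) combined with (\ref{ib})), exactly as the paper does.
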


\begin{proof}
We take a number $T_1\le T$, any elements $c_{1},c_{2} \in \xtj$ and their difference $\bc=c_{1}-c_{2}$. Of course, $\bc(0)=0$
and we have,
\eqq{|\bc'(p)| \leq \nbcj p^{\alpha-1}, }{pe}
\eqq{|\bc(p)| \leq \nbcj p^{\alpha}. }{pf}
Since operators $P_{i}$ are linear, it is sufficient to show that
\eqq{\| P_{i}\bc \|_{\xtj} \leq \rho_i(T_1) \| \bc \|_{\xtj}, }{pd} 
where $\rho_i(T_1)$ go to zero, when $T_1\to0$, $i=1,\ldots,4$. We recall that we denote by $c_{0}$ a generic constant depending  only on the data specified in the assumptions. We estimate operators $P_{i}$ one by one.

\paragraph{Case of  $P_{1}$.} Making use of (\ref{pe}), (\ref{eg}) and (\ref{ib}), we can estimate $P_{1}\bc$ in the following way, for $t\le T_1$,
\[
|P_{1} \bc(t) | \leq c_{0} \nbcj \izt \tamj \izta p^{\alpha-1} dp \dt= c_{0} \nbcj t^{2\alpha}.
\]
As a result we see,
\eqq{\| P_{1} \bc \|_{C(0,T_{1})} \leq c_{0} T^{\alpha}_{1}\nbcj.}{ie}
After an application of Lemma~\ref{lemone} and Lemma~\ref{lemtwod},  then making  use of estimates (\ref{eg}), (\ref{egt}), (\ref{pe}) and  assumption (\ref{zalA}) we arrive at
\[
|t^{1-\alpha} (P_{1}\bc)'(t)|\leq c_{0} \nbc [Q_{1,1}(0,t)+Q_{1,2}(0,t)].
\]
The estimates on $Q_{1,1}$ and $Q_{1,2}$ provided by Lemma \ref{estiQjj}, combined with
(\ref{ib}) give us
\eqq{
|t^{1-\alpha} (P_{1}\bc)'(t)|\leq c_{0}  T^{\alpha}_{1}\nbcj \hd \mbox{ for } \hd t\in [0,T_{1}].
}{qk}

\paragraph{Case of  $P_{2}$.} Taking into account (\ref{pf}) and (\ref{zalA}), we obtain the following estimate,
\[
|(P_{2}c)(t)| \leq c_{0} \nbcj [s(t)-s(0)].
\]
Hence, (\ref{ib}) implies that
\eqq{\| P_{2} \bc \|_{C(0,T_{1})} \leq c_{0} T^{\alpha}_{1}\nbcj.}{ql}
Finally, combing (\ref{pf}) with assumption (\ref{zalA}) gives us
\eqq{
|t^{1-\alpha} (P_{2}\bc)'(t)|\leq c_{0} T^{\alpha}_{1}\nbcj\qquad\hbox{for all} \quad t\le T_1.
}{qm}

\paragraph{Case of  $P_{3}$.} In order to estimate $P_{3}$, we use assumption (\ref{zalA})) and the bounds
(\ref{pf}), (\ref{ja}),  (\ref{ib}). Then, we clearly have
\[
|P_{3} \bc(t) | \leq c_{0} \nbcj \izt \tamj \izta p^{2\alpha-1} dp \dt= c_{0} \nbcj t^{3\alpha},\qquad\hbox{for all} \quad t\le T_1.
\]
Hence,
\eqq{\| P_{3} \bc \|_{C(0,T_{1})} \leq c_{0} T^{\alpha}_{1}\nbcj.}{qn}
Analogously to the case of $P_{1}$, using Lemma~\ref{lemone} and Lemma~\ref{lemtwod} and next (\ref{ja}), (\ref{jb}), (\ref{pf}) and (\ref{zalA}), we obtain
\[
|t^{1-\alpha} (P_{3}\bc)'(t)|\leq c_{0} t^{\alpha}\nbc [Q_{1,1}(0,t)+Q_{1,2}(0,t)],
\]
where $Q_{1,i}$ were defined in (\ref{qa}) and (\ref{qd}).

Hence from (\ref{qc}), (\ref{qe}) and (\ref{ib}), we get
\eqq{
|t^{1-\alpha} (P_{3}\bc)'(t)|\leq c_{0}  T^{\alpha}_{1}\nbcj \hd \mbox{ for } \hd t\in [0,T_{1}].
}{qo}

\paragraph{Case of  $P_{4}$.} Using (\ref{pf}), we have
\[
|(P_{4}c)(t)|\leq c_{0} \izt \tamj E(\tau) \bc (\tau) \dt \leq c_{0} \nbcj t^{\alpha},
\]
hence
\eqq{\| P_{4} \bc \|_{C(0,T_{1})} \leq c_{0} T^{\alpha}_{1}\nbcj.}{qp}
It remains to show the estimate for $t^{1-\alpha} (P_{4}\bc)'(t)$. To that end we will use
Lemma~\ref{lemone} and the fact that $\bc(0)=0$.
Then, from (\ref{oi}), (\ref{pe}) and (\ref{zalA}), we get
\[
|t^{1-\alpha} (P_{4}\bc)'(t)|\leq c_{0} t^{1-\alpha} \izt \tamj [E(\tau)\bc (\tau)]_{\tau} \dt
\]
\[
 \leq c_{0} \nbcj  t^{1-\alpha} \izt \tamj \tau^{\alpha-1} \dt  =c_{0}   t^{\alpha}\nbcj,
\]
so
\eqq{
|t^{1-\alpha} (P_{4}\bc)'(t)|\leq c_{0}  T^{\alpha}_{1}\nbcj \hd \mbox{ for } \hd t\in [0,T_{1}].
}{qq}
Finally, from (\ref{ie})-(\ref{qq}), we get
\eqq{\| P\bc \|_{X(T_{1})} \leq c_{0} T^{\alpha}_{1} \nbcj.}{qr}
This finishes the proof.
\end{proof}

\subsection{Finding the solution on $[0,T]$}
We want to find the solution on the interval $[0,T]$, which is given to us. However, due to the non local character of the problem, we cannot use directly the approach applicable to ODE's, but we have to modify it.

\begin{lemma} Let us
suppose that the assumptions of Theorem~\ref{main} hold, in particular
$T>0$ is given. Then, there exists a unique function $c\in X(T)$, i.e. $c \in C^{1}((0,T];\rr^{m})$ and $t^{1-\alpha}c' \in C([0,T];\rr^{m})$, which is a
solution of (\ref{eh}).
\label{konti}
\end{lemma}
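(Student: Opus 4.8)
The plan is to upgrade the local solvability of Lemma~\ref{kont} to solvability on the whole prescribed interval $[0,T]$ by a finite continuation argument that respects the non-local structure of~(\ref{eh}). Lemma~\ref{kont} already gives a unique $c\in X(T_{1})$ solving~(\ref{eh}) on $[0,T_{1}]$ for any $T_{1}<T_{0}$, and crucially $T_{0}$ depends only on the data $b,m,\alpha,T,\| s\|_{X(T)}$, not on $c_m(0)$. The difficulty is that every term of $P$ integrates over the entire past $[0,t]$, so one cannot simply restart the ODE-style iteration at $T_{1}$; the value of the solution at later times genuinely depends on its whole history. I would get around this by continuing inside the \emph{global} spaces $X(t_{n+1})$ and exploiting the causal (Volterra) character of $P$, namely that $(Pc)(t)$ depends only on $c|_{[0,t]}$.

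Concretely, fix a partition $0=t_{0}<t_{1}=T_{1}<\dots<t_{N}=T$ and argue by induction: assuming a solution $\bar c$ of~(\ref{eh}) has been produced on $[0,t_{n}]$, consider on $X(t_{n+1})$ the affine, closed (hence complete) subset $Y_{n}=\{c\in X(t_{n+1}):\ c|_{[0,t_{n}]}=\bar c\}$ and the map $\Phi c=c_m(0)+Pc$. By Lemma~\ref{defX}, $\Phi$ sends $X(t_{n+1})$ into itself; moreover, for $t\le t_{n}$ the value $(\Phi c)(t)=c_m(0)+(P\bar c)(t)=\bar c(t)$ by causality and by the fact that $\bar c$ solves~(\ref{eh}) on $[0,t_n]$, so $\Phi$ maps $Y_{n}$ into $Y_{n}$. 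Working in $X(t_{n+1})$, rather than in an unweighted $C^{1}$ space on $[t_n,t_{n+1}]$, is what forces the extension to keep $t^{1-\alpha}c'$ continuous across the junction $t_{n}$: the memory terms would, if split at $t_{n}$, each contribute a spurious $(t-t_{n})^{\alpha-1}$ singularity in the derivative, but these cancel once $Pc$ is estimated on the full interval $[0,t_{n+1}]$ as in Lemma~\ref{defX}.

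For the contraction, take $c_{1},c_{2}\in Y_{n}$ and set $d=c_{1}-c_{2}$; then $d$ and $d'$ vanish on $[0,t_{n}]$, and since the $c$-dependent part of $P$ is linear and causal, $\Phi c_{1}-\Phi c_{2}=-(P_{1}+P_{2}+P_{3}+P_{4})d$ is supported on $[t_{n},t_{n+1}]$, with all inner and outer integrals effectively starting at $t_{n}$. Because $t_{n}\ge T_{1}>0$, assumption~(\ref{zalA}) makes $\dot s$ bounded and $s$ Lipschitz on $[T_{1},T]$, and $|d'(p)|\le T_{1}^{\alpha-1}\|d\|_{X(t_{n+1})}$ there; repeating the computations of Lemma~\ref{kont} verbatim, with $\izt,\izta$ replaced by $\intt,\intta$ and with the bounds of Corollary~\ref{c:d1}, yields $\|\Phi c_{1}-\Phi c_{2}\|_{X(t_{n+1})}\le c_{0}(t_{n+1}-t_{n})^{\beta}\|d\|_{X(t_{n+1})}$ for some $\beta>0$ and a constant $c_{0}$ depending only on the data. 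Hence a single step $\delta>0$, independent of $n$, makes $\Phi$ a contraction whenever $t_{n+1}-t_{n}\le\delta$, and I would choose the partition of $[T_{1},T]$ into finitely many intervals of length at most $\delta$. Banach's theorem then gives a unique fixed point $c\in Y_{n}\subset X(t_{n+1})$ extending $\bar c$.

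Iterating these $N$ steps produces $c\in X(T)$ solving~(\ref{eh}) on all of $[0,T]$, which is the asserted regularity $c\in C^{1}((0,T];\rr^{m})$ with $t^{1-\alpha}c'\in C([0,T];\rr^{m})$. Uniqueness follows by the same scheme: two $X(T)$-solutions agree on $[0,T_{1}]$ by uniqueness of the fixed point of $P$ there, and, inductively, if they agree on $[0,t_{n}]$ then both belong to $Y_{n}$ and hence coincide on $[0,t_{n+1}]$ by uniqueness of the fixed point of $\Phi$. I expect the main obstacle to be the \emph{uniformity} of the continuation step: one must verify that passing the entire history forward only through the frozen slice $\bar c$ (via causality together with Lemma~\ref{defX}) keeps the contraction constant bounded by a data-dependent multiple of $(t_{n+1}-t_{n})^{\beta}$, uniformly in the base point $t_{n}\ge T_{1}$, so that finitely many steps of a fixed length reach the prescribed time $T$.
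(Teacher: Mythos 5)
Your proposal is correct and follows essentially the same route as the paper: the paper likewise continues the solution by introducing the affine set $Y_{t_*}(T)=\{u\in X(T):\ u|_{[0,t_*]}=c_m\}$, exploiting the Volterra (causal) structure of $P$ to reduce the fixed-point problem to the operator acting on the increment (your differences $d$ supported on $[t_*,T]$ correspond exactly to the paper's operators $P^{t_*}_i$ with all integrals starting at $t_*$), and obtaining a contraction constant $c_0(T-t_*)^{\alpha}$ uniform in $t_*\ge T_1$, so that finitely many steps of fixed length reach $T$. The only cosmetic difference is that the paper phrases the contraction in the equivalent $C^1([t_*,T])$ metric on $Y_{t_*}(T)$, while you work directly with the $X(t_{n+1})$ norm.
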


\begin{proof}
We know from Lemma \ref{defX} that for any $T>0$ operator $P$ maps $X(T)$ into itself. By the Banach fixed point theorem and Lemmas~\ref{defX}, \ref{kont} we obtained the unique solution of (\ref{eh}) for $t\in [0,T_{1}]$.
We want to  extend it for the whole interval $[0,T]$. For this purpose, we assume that solution $c_m$ is given on $[0,t_*]$ and $c_m(t_*)\in \rr^m$.
We shall see that we can extend this function on $[0,t_e]$, where $t_e>t_{*}$. The key observation will be that $t_e - t_{*}\ge\delta>0$, where $\delta = \delta(\alpha, m, T, \|s\|_{X(T)})$.

Let us suppose that for $t_*>0$ function $c_m\in X(t_*)$ is a unique solution to  (\ref{eh}). We introduce
$$
Y_{t_*}(T) =\{ u\in X(T) :\ u|_{[0,t_*]} = c_m\},
$$
we notice that for $c_1,$ $c_2\in X(T)$ such that  $c_1(t) = c_2(t)$ for $t \in [0,t_*],$ after writing $x_i = c_i|_{[t_*,T]}$, $i=1,2$, we have
$$
\| c_1-c_2 \|_{X(T)} \le \max\{1,T^{1-\alpha}\} \| x_1-x_2 \|_{C^1([t_*,T])}
$$
and
$$
\| x_1-x_2 \|_{C^1([t_*,T])}  \le \max\{1,t_*^{\alpha-1}\} \| c_1-c_2 \|_{X(T)}.
$$
As a result, the following metrics on $Y_{t_*}(T)$
$$
\rho_1(c_1,c_2) =  \| c_1-c_2 \|_{X(T)}\qquad\hbox{and}\qquad\rho_2(c_1,c_2) =  \| c_1-c_2 \|_{C^1([t_*,T])}
$$
are equivalent. It is of our advantage to rewrite the fixed point problem (\ref{r:ca}) in $X(T)$,
\begin{equation}\label{r:cap}
 c_m(t) =  c_m(0) + (Pc_m)(t)
\end{equation}
as a fixed point problem in $Y_{t_*}(T)$. For the sake of simplicity of notation we write $y(t) := c_m(t)$ for $t\in[0,t_*]$ and $x:= c|_{[t_*,T]}$ for $c\in Y_{t_*}(T)$. For $t>t_*$ we can rewrite (\ref{r:cap}) as
\begin{eqnarray*}
&x(t) = c(t) &= c_m(0) + (P(x+y))(t) \\
&& = (P^{t_*}x)(t) + (\bar P y)(t),
\end{eqnarray*}
where $\bar P y$ is fixed, because it depends upon history until $t=t_*$ and
operator $P^{t_*}: Y_{t_*}(T)\to Y_{t_*}(T)$ is given as,
$$
P^{t_*} = P^{t_*}_1 + P^{t_*}_2 + P^{t_*}_3 + P^{t_*}_4.
$$
Here are the definitions,
\begin{eqnarray*}
 (\bar P y)(t)&=& c(0) -\ca \int_{0}^{t_*} \tamj \izta\tap B(p, \tau )y'(p) dp \dt - \ca \int_{0}^{t_*} \frac{\dos (p)}{s(p)} \hat{D} y(p) dp
\\
 &&-\ca\int_{0}^{t_*} \tamj \izta \tap D(p,\tau) y(p ) dp \dt -  \ca\int_{0}^{t_*} \tamj   E(\tau) y(\tau)\dt\\
\\
&&-\ca \int_{t_*}^{t} \tamj \int_{0}^{t_*}\tap B(p, \tau )y'(p) dp \dt \\
&&- \ca\int_{t_*}^{t} \tamj \int_{0}^{t_*} \tap D(p,\tau) y(p ) dp \dt+ \jgja \ja G^{\ep}(t),
\end{eqnarray*}
and
\begin{eqnarray*}
 ({P^{t_*}_{1}}x)(t)&=&-\ca \itt \tamj \itta\tap B(p, \tau )x'(p) dp \dt,\\
 ({P^{t_*}_{3}}x)(t)&=&-\ca\itt \tamj \itta \tap D(p,\tau) x(p ) dp \dt,
\end{eqnarray*}
$$
({P^{t_*}_{2}}x)(t)=-\ca\itt \frac{\dos (p)}{s(p)} \hat{D} x(p) dp  \dt,\qquad
({P^{t_*}_{4}}x)(t)= -  \ca\itt \tamj   E(\tau) x(\tau)\dt.
$$
We shall show that operator  $P^{t_*}$ is a contraction on $Y_{t_*}(T)$, hence it has a unique fixed point, provided $T-t_*$ is small enough. Moreover, we have to prove that the difference $T-t_*$ can be estimated by a universal constant.

%

We will estimate the Lipschitz constant of each $P^{t_*}_i$, $i=1,\ldots,4$ on $Y_{t_*}(T)$ separately. For $c^{1}, c^{2} \in Y_{t_*}(T)$, we shall write $x = c^{1}-c^{2}$, moreover $\| x \| $ denotes $\| x \|_{C^{1}([t_*,T];\rr^{m})}$. We also notice that the definition of $Y_{t_*}(T)$ implies
\eqq{x (t_*)=0, \hd \hd |x (t) | \leq \| x \| (t-t_*),}{char}

\paragraph{Case of $P^{t_*}_{1}$. } Using an argument similar to that in the proof of Lemma~\ref{lemtwod}, we deduce that function $g_{3}(\tau) = \itta \tap B(p, \tau) x'(p) dp$ is absolutely continuous. Hence by Lemma~\ref{lemone}  and (\ref{of}) we get
\begin{eqnarray*}
(P^{t_*}_{1}x )'(t)& = &\alpha \ca \itt \tamj \itta \tapj B(p, \tau) x '(p)dp \dt \\
&& -\ca \itt \tamj \itta \tap B_{\tau}(p, \tau) x '(p)dp \dt.
\end{eqnarray*}
In order to estimate the derivative of $P^{t_*}_{1} x$ we need to use (\ref{eg}), (\ref{od}), (\ref{egt}) and (\ref{zalA}). As a result, we obtain for $t_*\geq T_{1}$,
\begin{eqnarray*}
 |(P^{t_*}_{1} x )'(t)| &\leq& c_{0} \| x \|  \itt \tamj \itta \tap  p^{\alpha-1} dp \dt \\&&+ c_{0} \| x \|  \itt \tamj \tau^{\alpha-1} \itta \tap dp \dt
\\
&\leq & c_{0} \| x \| \itt \tamj \dt + c_{0} \| x \| \itt \tamj \tau^{\alpha-1} (\tau - t_*)^{1- \alpha} \dt
\\
&\leq &c_{0} \| x \| (t -t_* )^{\alpha}+c_{0} \| x \| T_{1}^{\alpha-1} (t -t_* ),
\end{eqnarray*}
where $T_{1}$ is given in Lemma~\ref{kont}. Since $(P^{t_*}_{1} x )(t_*)=0$ we arrive at
\eqq{ \| P^{t_*}_{1} x \| \leq c_{0} \| x \| (T-t_*)^{\alpha}. }{ra}

\paragraph{Case of $P^{t_*}_{2}$. } Here, the estimates are simpler. From (\ref{char}) we get
\[
|(P^{t_*}_{2}x)'(t)|= c_{0} \frac{\dos(t)}{s(t)}|\hat{D} x (t)|
\leq c_{0} T^{\alpha-1}_{1} |x (t)| \leq c_{0} T_{1}^{\alpha-1} \| x \| (t- t_*).
\]
Thus,
\eqq{ \| \pf_{2}x \| \leq c_{0} \| x \| (T-t_*)^{\alpha}. }{rb}

\paragraph{Case of $P^{t_*}_{3}$. } Arguing as in the proof of Lemma~\ref{lemtwo}, we deduce that function $g_{4}(\tau) = \itta \tap \df(p, \tau)x(p) dp$ is absolutely continuous. Then,
\begin{eqnarray*}
 (P^{t_*}_{3}x )'(t) &=& \alpha \ca \itt \tamj \itta \tapj \df(p, \tau)x (p)dp \dt
\\
&&-\ca \itt \tamj \itta \tap \df_{\tau}(p, \tau)x (p)dp \dt,
\end{eqnarray*}
where we used Lemma~\ref{lemone}  and (\ref{og}).
Combining the estimates (\ref{ja}), (\ref{od}),(\ref{jb}), (\ref{zalA}) and (\ref{char}), we arrive at
\begin{eqnarray*}
|(P^{t_*}_{3} x )'(t)| &\leq &c_{0}  \itt \tamj \itta \tap \dos (p) p^{\alpha-1} | x  (p)| dp \dt
\\
&&+c_{0}  \itt \tamj \itta \tap \dos (p) \dos(\tau) | x  (p)| dp \dt
\\
&\leq& c_{0} \| x \| T_{1}^{\alpha-1} \itt \tamj \itta \tap  p^{\alpha-1} (p-t_*) dp \dt
\\
&\leq& c_{0} \| x \| T_{1}^{\alpha-1} (t- t_*)^{1+\alpha}.
\end{eqnarray*}
Hence,
\eqq{ \| P^{t_*}_{3}  x  \| \leq c_{0} \| x \| (T-t_*)^{\alpha}. }{rc}

\paragraph{Case of $P^{t_*}_{4}$. } Using an analog of Lemma~\ref{lemone} and $ x  (t_*) =0$, we get
\[
|(P^{t_*}_{4}  x )'(t)| \leq \itt \tamj |E'(\tau)| | x  (\tau) |\dt + \itt \tamj |E(\tau)| | x ' (\tau) |\dt .
\]
Hence,
\begin{eqnarray*}
 |(P^{t_*}_{4}  x )'(t)| &\leq& c_{0} \| x \| \itt \tamj \tau^{\alpha-1} (\tau - t_* )\dt + c_{0} \| x \| (t-t_*)^{\alpha}
\\
&\leq &c_{0} \| x \| T^{\alpha-1}_{1} (t -t_*)^{1+\alpha} + c_{0} \| x \| (t-t_*)^{\alpha},
\end{eqnarray*}
where we applied (\ref{oi}), (\ref{char}) and (\ref{zalA}).
Thus,
\eqq{ \| P^{t_*}_{4}  x  \| \leq c_{0} \| x \| (T-t_*)^{\alpha}. }{rd}
Finally, from (\ref{ra})-(\ref{rd}), we have
\[
\| P^{t_*} x  \| \leq c_{0} \|x\| (T-t_*)^{\alpha},
\]
and the proof is finished.
\end{proof}

\noindent{\it Proof of Theorem~\ref{przyb}. }
Lemma~\ref{konti} yields $c_{m} \in AC([0,T];\rr^{m})$, a fixed point of $P$ in $X(T)$, where $T>0$ was given in advance, i.e. (\ref{eh}) holds. We notice that $c_{m}$ satisfies the assumptions of Lemma~\ref{lemthree}, thus $\da c_{m} \in C^{0,1-\alpha}_{loc}((0,T];\rr^{m})$.
Thus, if we apply Caputo derivative $\da$ to the both sides of (\ref{eh}), we obtain that $v_{m}$ defined by (\ref{ec}) satisfies the system  (\ref{ef}), (\ref{eff}). Finally, we have $v_{m,t}=\snm c_{n,m}'(t) \vn(x,t)+\snm \cnmt \varphi_{n,t}(x,t) $. By Lemma~\ref{konti} we obtain the continuity of $t^{1-\alpha}c_{m}'$. On the other hand,  (\ref{zalA}) implies the continuity of  $t^{1-\alpha} \varphi_{n,t}(x,t)$ in  $\overline{\qs}$, hence
$t^{1-\alpha}v_{m,t}$ is continuous on $\overline{\qs}$. \qed


\section{Existence of weak solutions}\label{s:w}

In order to deduce appropriate estimates for $v_{m}$, we need to state a technical lemma.

\begin{lemma}
We assume that $s$ satisfies (\ref{zalA}), $b$ is positive and $\alpha\in (0,1)$. Let us suppose that $u$ is continuous on $\overline{\qs} $ and such that $t^{1-\alpha}u_{t} \in C (\overline{\qs})$. Then, for each   $t\in(0,T]$ the following estimate holds,
\[
D^{\alpha} \| u(\cdot,t)\|^{2}_{L^{2}(0,s(t))} +
\frac{t^{-\alpha}}{\Gamma(1-\alpha)} \int_{0}^{b} |u(x,t)-u(x,0)|^{2}dx
\]
\[
 +
\jgja\int_{b}^{s(t)} [t-\sj]^{-\alpha} |u(x,t)-u(x,\sj)|^{2}dx
\]
\eqq{ \leq 2\int_{0}^{s(t)} D^{\alpha}_{s}u(x,t) \cdot u(x,t)dx +\jgja\izt \ta |u(s(\tau),\tau)|^{2}\dot{s}(\tau) \dt.   }{ba}
\label{stwjeden}
\end{lemma}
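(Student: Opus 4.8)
The plan is to reduce (\ref{ba}) to a pointwise-in-$x$ fractional integration-by-parts identity. I set $g(t):=\izs u^{2}(x,t)\,dx=\|u(\cdot,t)\|^{2}_{L^{2}(0,s(t))}$. Since $t^{1-\alpha}u_{t}\in C(\overline{\qs})$ we have the bound $|u_{\tau}(x,\tau)|\le c_0\tau^{\alpha-1}$, so $g$ is absolutely continuous and the Leibniz rule gives, for a.e.\ $\tau$, $g'(\tau)=\dos(\tau)u^{2}(s(\tau),\tau)+2\izst u(x,\tau)u_{\tau}(x,\tau)\,dx$. Applying the ordinary Caputo derivative $\da$ (recall that here $D^{\alpha}$ acts on the function of $t$ alone) and separating the boundary contribution, I would write
\[
\da g(t)=\jgja\izt\ta\dos(\tau)u^{2}(s(\tau),\tau)\dt+\jgja\izt\ta\Big[2\izst u(x,\tau)u_{\tau}(x,\tau)\,dx\Big]\dt.
\]
The first term on the right is precisely the last term of (\ref{ba}), so only the double integral remains to be analysed.

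Next I would exchange the order of integration in the double integral. Its region $\{(x,\tau):0<\tau<t,\ 0<x<s(\tau)\}$ equals, since $s$ is increasing, $\{(x,\tau):0<x<s(t),\ a(x)<\tau<t\}$, where $a(x)=0$ for $x\le b$ and $a(x)=\sj$ for $x>b$. Absolute convergence follows from the Beta-function bound $\izt\ta\tau^{\alpha-1}\dt=B(1-\alpha,\alpha)<\infty$, so Fubini applies and
\[
\jgja\izt\ta\Big[2\izst u(x,\tau)u_{\tau}(x,\tau)\,dx\Big]\dt=\jgja\izs\int_{a(x)}^{t}\ta\,(u^{2})_{\tau}(x,\tau)\,\dt\,dx.
\]
On the other hand, by the definition (\ref{aa}) of $\das$, the first term on the right of (\ref{ba}) is $2\izs\das u(x,t)\,u(x,t)\,dx=\jgja\izs 2u(x,t)\int_{a(x)}^{t}\ta u_{\tau}(x,\tau)\,\dt\,dx$. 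Thus, after integrating in $x$ and splitting $(0,s(t))=(0,b)\cup(b,s(t))$ according to $a(x)$, the whole inequality collapses to a statement about the $\tau$-integrals for each fixed $x$.

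The core is the following identity, to be proved for fixed $x$ with $a:=a(x)$ and $w(\cdot):=u(x,\cdot)$. Putting $\psi(\tau):=w(t)-w(\tau)$, one has $[w(t)-w(\tau)]w'(\tau)=-\tfrac12(\psi^{2})'(\tau)$, and integrating $\int_{a}^{t}\ta(\psi^{2})'\,\dt$ by parts against the kernel $\ta$ yields
\[
2w(t)\int_{a}^{t}\ta w'(\tau)\,\dt-\int_{a}^{t}\ta (w^{2})'(\tau)\,\dt=(t-a)^{-\alpha}[w(t)-w(a)]^{2}+\alpha\int_{a}^{t}\taj[w(t)-w(\tau)]^{2}\,\dt.
\]
The upper boundary term vanishes because $\psi(\tau)=O(t-\tau)$ as $\tau\to t$ (for $t>0$ the map $w$ is $C^{1}$ near $t$), while the lower boundary term produces exactly $(t-a)^{-\alpha}[w(t)-w(a)]^{2}$, and both terms on the right are nonnegative. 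Multiplying by $\jgja$ and integrating in $x$: the left-hand side reproduces $2\izs\das u\cdot u\,dx$ minus the quantity $\da g(t)$ diminished by the boundary term from the first paragraph; the term $(t-a(x))^{-\alpha}[w(t)-w(a)]^{2}$ becomes $t^{-\alpha}|u(x,t)-u(x,0)|^{2}$ on $(0,b)$ and $[t-\sj]^{-\alpha}|u(x,t)-u(x,\sj)|^{2}$ on $(b,s(t))$, i.e.\ the two remaining left-hand terms of (\ref{ba}); and discarding the nonnegative remainder $\alpha\int_{a}^{t}\taj[w(t)-w(\tau)]^{2}\,\dt$ yields the asserted inequality. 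In fact this procedure gives an \emph{identity} with a computable nonnegative defect.

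The main obstacle is the rigorous justification of these manipulations under the sole regularity $t^{1-\alpha}u_{t}\in C(\overline{\qs})$: the absolute continuity of $g$ and the validity of the Leibniz rule in spite of the $\tau^{\alpha-1}$ singularity of $u_{\tau}$ at $\tau=0$, the Fubini exchange, and—most delicately—the vanishing of the upper boundary term in the integration by parts together with the finiteness of the singular remainder $\int_{a}^{t}\taj[w(t)-w(\tau)]^{2}\,\dt$. For $x>b$ the lower limit $a=\sj>0$ keeps $u_{\tau}$ bounded near $\tau=a$, so the only genuinely singular endpoint is $\tau=0$ (for $x\le b$), where the bound $|u_{\tau}|\le c_0\tau^{\alpha-1}$ and the boundedness of $\psi$ render every endpoint term and the remainder integrable; performing the integration by parts on $[a,t-\varepsilon]$ and letting $\varepsilon\to0^{+}$ removes the upper-endpoint subtlety.
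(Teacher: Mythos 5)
Your proposal is correct and follows essentially the same route as the paper's proof: both reduce the claim to the integral of $\ta\, u_{\tau}(x,\tau)\,[u(x,t)-u(x,\tau)]$, recognize the integrand as $-\tfrac12\partial_{\tau}|u(x,t)-u(x,\tau)|^{2}$, and integrate by parts against the kernel $\ta$, keeping the lower-endpoint boundary terms, discarding the nonnegative bulk term $\alpha\int\taj|u(x,t)-u(x,\tau)|^{2}\,\dt$, and killing the upper-endpoint contribution by a cutoff near $\tau=t$ together with the bound $|u_{\tau}|\le c\,\tau^{\alpha-1}$. The only difference is organizational: the paper performs the cutoff ($\tau\le t-h$) and the limit $h\to 0$ under the double integral, while you argue pointwise in $x$ with an $\varepsilon$-cutoff and then integrate in $x$, which is a technical variant of the same argument.
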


\begin{proof}
We recall that $\das $ was defined in (\ref{aa}).
We can deduce from our assumptions
that
 function $\xi(t) =\| u(\cdot , t)\|^{2}_{L^{2}(0, s(t))}$ is absolutely continuous on $[0,T]$, thus the first term on the left-hand-side of (\ref{ba}) is defined for a.e. $t\in (0,T)$. Furthermore,
$\xi$ satisfies the assumptions of Lemma~\ref{lemthree}, thus $\das \xi \in C^{0,1-\alpha}_{loc}((0,T])$, so both functions are defined for each $t\in (0,T]$.  This is why we can write
\begin{eqnarray*}
&&\gja \int_{0}^{s(t)} D^{\alpha}_{s}u(x,t) \cdot u(x,t)dx +\frac{1}{2} \izt \ta |u(s(\tau),\tau)|^{2}\dot{s}(\tau) \dt\\
&&- \frac{\gja}{2} D^{\alpha} \| u(\cdot,t)\|^{2}_{L^{2}(0,s(t))}
\\
&= &\izt\izst \ta \utt  u(x,t)dx \dt+\frac{1}{2} \izt \ta |u(s(\tau),\tau)|^{2}\dot{s}(\tau) \dt
\\
&&- \frac{1}{2}\izt \ta \frac{d}{d \tau} \izst |\uxt|^{2}dx \dt
\\
&=&\izt\izst \ta \utt  u(x,t)dx \dt
- \izt \ta  \izst \uxt u_{\tau}(x, \tau)  dx \dt
\\
&= &\izt \izst \ta \utt [u(x,t)- \uxt]dx \dt
\\
&=&\left(  \ith \izst + \itht \izst\right)\ta \utt [u(x,t)- \uxt]dx \dt \\
&\equiv &I^{1}_{h}+I^{2}_{h},
\end{eqnarray*}
where $h>0$. Careful integration by parts gives us,
\begin{eqnarray*}
I^{1}_{h} &=&-\frac{1}{2} \ith \izst  \ta  \left( |u(x,t)- \uxt|^{2} \right)_{\tau}dx \dt
\\
&=&-\frac{1}{2} \left( \ibh +\ibsh \right) \ta  \left( |u(x,t)- \uxt|^{2} \right)_{\tau}\dt dx
\\
&=&\frac{\alpha}{2} \left( \ibh +\ibsh \right) \taj   |u(x,t)- \uxt|^{2} \dt dx
\\&&
\left.-\frac{1}{2}  \inb  \ta   |u(x,t)- \uxt|^{2} dx \right|_{\tau =0}^{\tau = t-h}\\&&
\left.-\frac{1}{2} \ibth  \ta   |u(x,t)- \uxt|^{2} dx \right|_{\tau =\sj}^{\tau = t-h}
\\
&=&\frac{\alpha}{2} \ith \izst \taj   |u(x,t)- \uxt|^{2} dx\dt
\\
&&+\frac{1}{2} \inb t^{-\alpha} |u(x,t)-u(x,0)|^{2}dx +\frac{1}{2} \ibth (t-\sj)^{-\alpha} |u(x,t)-u(x, \sj)|^{2}dx
\\
&&-\frac{1}{2} h^{-\alpha} \int_{0}^{s(t-h)}    |u(x,t)- u(x,t-h)|^{2} dx.
\end{eqnarray*}
We denote the last term by $I^{3}_{h}$. Therefore, the proof of (\ref{ba}) will be finished if we show that $I^{2}_{h}$ and $I^{3}_{h}$ have limits equal to zero, when  $h\rightarrow 0$. In order to prove this we assume that $h\leq \frac{1}{2} t$. Then, from (\ref{zalA}) we obtain
\begin{eqnarray*}
|I^{2}_{h}| &\leq & 2 \| u \|_{C(\qs)} \| t^{1-\alpha}u_{t} \|_{C(\qs)} \int_{t-h}^{t} \int_{0}^{s(\tau)} \ta \tau^{\alpha-1} dx \dt
\\
&=& 2 s(T)\| u \|_{C(\qs)} \| t^{1-\alpha}u_{t} \|_{C(\qs)} \int_{t-h}^{t}  \ta \tau^{\alpha-1}  \dt
\\
&\leq &2 s(T)\| u \|_{C(\qs)} \| t^{1-\alpha}u_{t} \|_{C(\qs)} \left(\frac{t}{2}\right)^{\alpha-1}  \frac{h^{1-\alpha}}{1-\alpha} \underset{h\rightarrow 0}{\longrightarrow}0.
\end{eqnarray*}
Similarly, we get
\[
|I^{3}_{h}| \leq h^{2-\alpha} (t-h)^{2\alpha-2}  \| t^{1-\alpha}u_{t} \|^{2}_{C(\qs)} s(t-h)
\leq h^{2-\alpha} \left(  \frac{t}{2} \right)^{2\alpha-2}  \| t^{1-\alpha}u_{t} \|^{2}_{C(\qs)} s(t)\underset{h\rightarrow 0}{\longrightarrow}0,
\]
hence the proof is finished.
\end{proof}

\begin{lemma}\label{oszac}
Let us suppose that the assumptions of Theorem~\ref{main} are satisfied. Then, for each $m\in \N$, 
and $t\in (0,T]$ the  approximate solutions $v_{m}$, given in Theorem~\ref{przyb}, satisfy the following estimate
\begin{eqnarray}\label{wc}
 &&I^{1-\alpha} \| v_{m}(\cdot,t)\|^{2}_{L^{2}(0,s(t))} +\jgja\int_{0}^{t} \tau^{-\alpha}\int_{0}^{b} |v_{m}(x,\tau)-v_{m}(x,0)|^{2}dx \dt \nonumber
\\
&& +\jgja \int_{0}^{t}\int_{b}^{s(\tau)} [\tau-\sj]^{-\alpha} |v_{m}(x,\tau)|^{2}dx\dt+ \int_{0}^{t} \izst |v_{m,x}(x,\tau)|^{2}  dx \dt\nonumber
\\
&\leq & \frac{s(T)^{2}}{2} \|g\|^{2}_{L^{2}(\qst)} +\frac{t^{1-\alpha}}{\ga (1-\alpha)} \| v_{0}\|_{L^{2}(0,b)}^{2} + \delta(\ep),
\end{eqnarray}
where  $\lim\limits_{\ep\rightarrow 0 } \delta(\ep)=0$.
\end{lemma}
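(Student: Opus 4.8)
The plan is to run the standard energy method, organized around the technical Lemma~\ref{stwjeden} and a fractional integration in time. First I would derive the Galerkin energy identity by multiplying equation (\ref{ef}) by $c_{k,m}(t)$, summing over $k=0,\ldots,m$, and using $\sum_k c_{k,m}(t)\vk(x,t)=v_m(x,t)$ together with $\sum_k c_{k,m}(t)\vkx(x,t)=v_{m,x}(x,t)$. Since $\{\vk(\cdot,t)\}$ is orthonormal, this collapses to
\[
\izs\das v_m(x,t)\,v_m(x,t)\,dx+\izs|v_{m,x}(x,t)|^2\,dx=\izs\gem(x,t)\,v_m(x,t)\,dx,\qquad t\in(0,T].
\]

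Next I would apply Lemma~\ref{stwjeden} with $u=v_m$; this is legitimate because Theorem~\ref{przyb} gives $v_m\in C(\overline{\qs})$ and $t^{1-\alpha}v_{m,t}\in C(\overline{\qs})$. Two structural simplifications occur. The boundary integral on the right of (\ref{ba}) vanishes identically, because $\vn(s(\tau),\tau)=0$ (see (\ref{defvn})) forces $v_m(s(\tau),\tau)=0$. Moreover, for $x\in[b,s(\tau)]$ the point $(x,\sj)$ lies on the lateral boundary, as $s(\sj)=x$, so $v_m(x,\sj)=0$ and the third term of (\ref{ba}) reduces to $\jgja\int_b^{s(\tau)}[\tau-\sj]^{-\alpha}|v_m(x,\tau)|^2dx$, matching (\ref{wc}). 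Substituting the energy identity into the right side of (\ref{ba}) and moving the gradient term to the left gives, for each $\tau\in(0,T]$,
\[
D^{\alpha}\|v_m(\cdot,\tau)\|^2_{L^2(0,s(\tau))}+\jgja\tau^{-\alpha}\inb|v_m(x,\tau)-v_m(x,0)|^2dx+\jgja\int_b^{s(\tau)}[\tau-\sj]^{-\alpha}|v_m(x,\tau)|^2dx+2\izst|v_{m,x}|^2dx\le 2\izst\gem v_m\,dx.
\]

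To close the estimate I would absorb the coupling term by a Poincar\'e inequality. Since $v_m(s(\tau),\tau)=0$, one has $\|v_m(\cdot,\tau)\|_{L^2(0,s(\tau))}\le \tfrac{s(\tau)}{\sqrt2}\|v_{m,x}(\cdot,\tau)\|_{L^2(0,s(\tau))}$, so by Young's inequality and the fact that $\gem(\cdot,\tau)$ is the $L^2(0,s(\tau))$-projection of $g^{\ep}(\cdot,\tau)$ (hence $\|\gem(\cdot,\tau)\|\le\|g^{\ep}(\cdot,\tau)\|$),
\[
2\izst\gem v_m\,dx\le \frac{s(\tau)^2}{2}\|g^{\ep}(\cdot,\tau)\|^2_{L^2(0,s(\tau))}+\izst|v_{m,x}|^2dx .
\]
This leaves precisely one copy of the gradient term on the left and yields the coefficient $\tfrac{s(T)^2}{2}$ after $s(\tau)\le s(T)$. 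Finally I would integrate over $\tau\in(0,t)$. The leading term transforms through the Caputo--Riemann--Liouville relation $\izt D^{\alpha}\xi(\tau)\,d\tau=I^{1-\alpha}\xi(t)-\tfrac{t^{1-\alpha}}{\Gamma(2-\alpha)}\xi(0)$ applied to $\xi(\tau)=\|v_m(\cdot,\tau)\|^2_{L^2(0,s(\tau))}$, whose absolute continuity was noted in the proof of Lemma~\ref{stwjeden}; here $\xi(0)=\|v_m(\cdot,0)\|^2\le\|v_0\|^2_{L^2(0,b)}$ since $v_m(\cdot,0)$ is the $L^2(0,b)$-projection of $v_0$. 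The remaining terms integrate directly to the second, third and fourth terms of (\ref{wc}), while $\tfrac{s(T)^2}{2}\izt\|g^{\ep}(\cdot,\tau)\|^2d\tau=\tfrac{s(T)^2}{2}\|g\|^2_{L^2(\qst)}+\delta(\ep)$ with $\delta(\ep)\to0$, because $g^{\ep}\to g$ in $L^2(\qs)$.

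I expect the main obstacle to be bookkeeping rather than a deep idea: one must verify carefully that \emph{both} boundary contributions in (\ref{ba}) drop out, and calibrate the Poincar\'e--Young step so that exactly the coefficient $1$ remains on $\int|v_{m,x}|^2$ and exactly $\tfrac{s(T)^2}{2}$ appears on the source. The other delicate point is the time integration of the Caputo inequality, which must convert $D^{\alpha}$ of the $L^2$-norm into $I^{1-\alpha}$ of that norm while generating precisely the initial-data term proportional to $t^{1-\alpha}\|v_0\|^2_{L^2(0,b)}$; once these constants line up, the four left-hand terms and the two right-hand terms of (\ref{wc}) follow directly.
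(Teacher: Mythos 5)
Your proof is correct and follows essentially the same route as the paper's: multiply (\ref{ef}) by $c_{k,m}(t)$ and sum, apply Lemma~\ref{stwjeden} with $u=v_{m}$ (both boundary contributions vanish since $v_{m}(s(\tau),\tau)=0$ and $v_{m}(x,s^{-1}(x))=0$), absorb the gradient term via Poincar\'e and Young together with the projection bound $\|\gem(\cdot,\tau)\|\leq\|g^{\ep}(\cdot,\tau)\|$, and integrate in time using $I^{1}=I^{1-\alpha}I^{\alpha}$ and $I^{\alpha}D^{\alpha}\xi=\xi-\xi(0)$. The only differences are cosmetic: your pointwise-in-$\tau$ Young split produces the coefficient $\frac{s(T)^{2}}{2}$ directly (the paper's split yields $\frac{s(T)^{2}}{4}$), and you control the mollification error via $g^{\ep}\rightarrow g$ in $L^{2}(\qs)$ instead of the paper's shifted-window estimate $\delta(\ep)=\frac{s(T)^{2}}{4}\sup_{t}\int_{t}^{t+\ep}\int_{0}^{s(\tau)}|g|^{2}\,dx\,d\tau$; both variants are sound.
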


\begin{proof}
We multiply  (\ref{ef}) by $c_{k,m}(t)$ and sum it up over $k$ from $0$ to $m$
\[
\izs \das v_{m}(x,t) \cdot v_{m}(x,t) dx + \izs |v_{m,x}(x,t)|^{2}  dx=\izs \gem(x,t) v_{m}(x,t) dx.
\]
Applying Theorem~\ref{przyb}, we deduce that $v_{m}$ satisfies the hypothesis of Lemma~\ref{stwjeden}. Hence, from this Lemma and the fact that $v_{m}(s(t),t)=0$ for $t\in [0,T]$, we get
\[
\frac{1}{2} D^{\alpha} \| v_{m}(\cdot,t)\|^{2}_{L^{2}(0,s(t))} +\jgja\frac{t^{-\alpha}}{2} \int_{0}^{b} |v_{m}(x,t)-v_{m}(x,0)|^{2}dx
\]
\[
+ \jgja\frac{1}{2} \int_{b}^{s(t)} [t-\sj]^{-\alpha} |v_{m}(x,t)|^{2}dx+ \izs |v_{m,x}(x,t)|^{2}  dx
\]
\eqq{
\leq \izs \gem(x,t) v_{m}(x,t) dx.
}{wb}
Using Young's 
inequality, we have
\begin{eqnarray*}
 &&\izt \izst \gem(x,\tau) v_{m}(x,\tau) dx \dt \\
&\leq &\frac{s(T)^{2}}{4} \izt \izst |\gem (x,\tau)|^{2}dx \dt + s(T)^{-2} \izt \izst |v_{m}(x,\tau)|^{2}dx \dt
\\
&\leq& \frac{s(T)^{2}}{4} \int^{t+\ep}_{0}\izst |g (x,\tau)|^{2}dx \dt + \frac{1}{2} \izt \izst |v_{m,x}(x,\tau)|^{2}dx \dt,
\\
&\leq & \frac{s(T)^{2}}{4} \| g \|^{2}_{L^{2}(\qst)}+ \frac{1}{2} \izt \izst |v_{m,x}(x,\tau)|^{2}dx \dt + \delta(\ep),
\end{eqnarray*}
where $\delta (\ep) = \frac{s(T)^{2}}{4} \sup_{t>0} \int^{t+\ep}_{t}\izst |g (x,\tau)|^{2}dx \dt$. Due to the absolute continuity of the Lebesgue integral we notice that $\lim_{\ep\rightarrow 0 } \delta(\ep)=0$. Next, we integrate (\ref{wb}) with respect to $t$ while applying (\ref{r:zli}) and  (\ref{doda}) to the first term. We use the above inequality to estimate the right-hand-side.
\end{proof}

Finally, after these preparations, we are ready to prove the main result, Theorem~\ref{main}.

\begin{proof}[Proof of Theorem~\ref{main}]
By the assumption $h\in C^{1}([0,T]),$ so $\da h \in C([0,T])$, hence $u$ is a weak solution of (\ref{a}) if and only if $v$ is a weak solution of (\ref{hc}). Here, $g\in L^{2}(\qs)$ and $v_{0}\in L^{2}(0,b)$ were defined in (\ref{wd}). Therefore, we have to show that (\ref{hbb}) holds.

For $\ep=1/m$  Theorem~\ref{przyb} yields  $v_{m}$ satisfying (\ref{ef}). We multiply this equality by a test function $\psi\in C^{1}([0,T])$ such that  $\psi(T)=0$. Then, we get
\[
\int_{0}^{T}\izs \das v_{m}(x,t) \cdot \vk(x,t) dx \psi(t) dt + \int_{0}^{T} \izs v_{m,x}(x,t) \cdot \vkx(x,t) dx \psi(t) dt
\]
\eqq{
=\int_{0}^{T}\izs \gem(x,t) \vk(x,t) dx \psi(t) dt.
}{wh}
We note that $v_{m}$ is absolutely continuous with respect to $t$, so  is $I^{1-\alpha}v_{m}$. Thus, we may integrate by parts to get
\begin{eqnarray*}
&&\int_{0}^{T}\izs \das v_{m}(x,t) \cdot \vk(x,t) dx \psi(t) dt
\\
&=&\int_{0}^{b} \int_{0}^{T} \frac{d}{dt}I^{1-\alpha} [v_{m}(x,t) -v_{m}(x,0)] \cdot \vk(x,t)  \psi(t) dt dx
\\
&&+
\int_{b}^{s(T)} \int_{\sj}^{T} \frac{d}{dt}I^{1-\alpha} v_{m}(x,t)  \cdot \vk(x,t)  \psi(t) dt dx
\\
&=&-\int_{0}^{b} \int_{0}^{T} I^{1-\alpha} [v_{m}(x,t) -v_{m}(x,0)] \cdot \left[\vk(x,t)  \psi(t)\right]_{t} dt dx\\
&&-\int_{b}^{s(T)} \int_{\sj}^{T} I^{1-\alpha} v_{m}(x,t) \cdot \left[\vk(x,t)  \psi(t)\right]_{t} dt dx
\\
&=&-\int_{0}^{T} \izs I^{1-\alpha}_{s} [v_{m}(x,t) -\widetilde{v}_{m}(x,0)] \cdot \left[\vk(x,t)  \psi(t)\right]_{t} dx dt,
\end{eqnarray*}
where we used the fact that $v_{m}(x,\sj)=0.$\\
We shall show the following estimate
\eqq{\| I^{1-\alpha} \vtm ( \cdot, t ) \|_{L^{2}(0,s(T))} \leq I^{1-\alpha} \| v_{m} (\cdot, t ) \|^{2}_{L^{2}(0,s(t))} + \frac{1}{\Gamma(2-\alpha)} t^{1-\alpha}, \hd \mbox{ for  } t\in [0,T],  }{we}
where the tilde over $v_m$ denotes an extension by zero. Indeed, denoting by $\| \cdot \|$ the norm $\| \cdot  \|_{L^{2}(0,s(T))}$, we can write
\begin{eqnarray*}
\| I^{1-\alpha} \vtm ( \cdot, t ) \| &=& \sup_{\| \eta \|=1} \left| \int_{0}^{s(T)} I^{1-\alpha} \vtm ( x, t ) \eta(x) dx  \right|
\\
&\leq& \jgja \sup_{\| \eta \|=1}  \int_{0}^{t} \ta  \int_{0}^{s(T)}  |\vtm ( x, \tau ) \eta(x)| dx  \dt
\\
&\leq &\jgja   \int_{0}^{t} \ta   \|\vtm ( \cdot , \tau )\|   \dt
\le \jgja   \int_{0}^{t} \ta   [\|\vtm ( \cdot , \tau )\|^{2}+1]   \dt
\\
&=&I^{1-\alpha} \| \vtm (\cdot, t ) \|^{2} + \frac{1}{\Gamma(2-\alpha)} t^{1-\alpha},
\end{eqnarray*}
which leads to (\ref{we}).

By Lemma~\ref{oszac}, we  get the estimates of   $\{v_{m} \}, \{v_{m,x} \}$ in the norm $L^{2}(\qs)$ and  
by (\ref{we})  the bound for the  norm of $I^{1-\alpha} \widetilde{v}_{m}$ in  $L^{\infty}(0,T;L^{2}(0,s(T)) $.   As a result, there exists a subsequence (denoted again by $\{v_{m} \}$) and $q\in  L^{\infty}(0,T;L^{2}(0,s(T)))$ such that  $v_{m} \longrightarrow   v $, $v_{m,x}  \rightharpoonup v_{,x}$ in   $L^{2}(\qs)$ and  $I^{1-\alpha} \widetilde{v}_{m}   \overset{\ast}{\rightharpoonup} q$ in  $L^{\infty}(0,T;L^{2}(0,s(T))) $. Then  $\tilde{v}_{m} \longrightarrow \tilde{v} $  in   $L^{2}((0,s(T))\times (0,T))$. Thus, from inequality
\eqq{\| I^{1-\alpha} w \|_{L^{2}((0,s(T))\times (0,T))} \leq c_{0}\|  w \|_{L^{2}((0,s(T))\times (0,T))},}{wf}
we obtain  $I^{1-\alpha} \widetilde{v}_{m} \rightarrow  I^{1-\alpha} \widetilde{v}$ in  $L^{2}((0,s(T))\times (0,T))$. The uniqueness of the weak limit implies $I^{1-\alpha}\widetilde{v}=q\in L^{\infty}(0,T;L^{2}(0,s(T)))$.
From the estimate  $\left| \left[\vk(x,t)  \psi(t)\right]_{t} \right|\leq c(b,k)\dot{s}(t)$ and (\ref{zalA}) we have,  $\left[\vk(x,t)  \psi(t)\right]_{t} \in L^{1}(0,T;L^{2}(0,s(T)))$. Hence,
\[
\int_{0}^{T} \izsT I^{1-\alpha} \tilde{v}_{m}(x,t) \cdot \left[\vk(x,t)  \psi(t)\right]_{t} dx dt \to 
\int_{0}^{T} \izsT I^{1-\alpha} \tilde{v}(x,t) \cdot \left[\vk(x,t)  \psi(t)\right]_{t} dx dt.
\]
and
\eqq{
\int_{0}^{T}\izs \das v_{m}(x,t) \cdot \vk(x,t) dx \psi(t) dt
 \to 
-\int_{0}^{T} \izsT I^{1-\alpha}_{s} [v(x,t)-\tilde{v}_{0}(x)] \cdot \left[\vk(x,t)  \psi(t)\right]_{t} dx dt.}{wg}
Therefore, from (\ref{wh}) we obtain
\[
-\int_{0}^{T} \izsT I^{1-\alpha}_{s} [v(x,t)-\tilde{v}_{0}(x)] \cdot \left[\vk(x,t)  \psi(t)\right]_{t} dx dt + \int_{0}^{T} \izs v_{x}(x,t) \cdot \vkx(x,t) dx \psi(t) dt
\]
\eqq{
=\int_{0}^{T}\izs g(x,t) \vk(x,t) dx \psi(t) dt,
}{wi}
which finishes the proof of theorem~\ref{main}.

\end{proof}

\section{Appendix}\label{s:d}
In this section we collect facts, which we use in the previous parts of this paper.
We begin with the definition of fractional operators and some simple calculations.

\subsection{Basic facts about fractional operators}
For $\alpha\in (0,1)$ we define fractional integration of integrable function $f$
\[
\ia_{t_{0}}f(t)= \jga \int_{t_{0}}^{t} (t-\tau)^{\alpha-1} f(\tau) \dt.
\]
For simplicity we write $\ia f(t)=\ia_{0}f(t)$. One can check, see  \cite[formula (2.21)]{Samko},
that
\begin{equation}\label{r:zli}
 I^1 f = I^{1-\alpha}( I^\alpha f).
\end{equation}

For absolutely continuous function $f$ 
we define the Caputo fractional derivative,
\[
\da f(t) = \jgja \izt (t-\tau)^{-\alpha} \dot{f}(\tau) \dt.
\]
Recall that if $p<t$, then we have
\eqq{\int_{p}^{t} \tamj \tap   \dt=B(1-\alpha, \alpha),}{dodb}
where $B(x,y)$ denotes the Beta function. Furthermore,
\eqq{\ga \gja =B(1-\alpha,\alpha).}{dodc}
If $f$ is absolutely continuous, then by Fubini theorem and (\ref{dodb})  we have,
\eqq{\ia \da f(t)=f(t)-f(0).}{doda}

\medskip\noindent{\bf Lemma \ref{lemone}.} \ {\it
If $f$ is absolutely continuous, then $(\ia f)(t)\in AC[0,T]$ and $(\ia f)'(t)=\ia f'(t) + t^{\alpha-1}\frac{f(0)}{\ga}$.
}
\begin{proof}
Using the well-known characterization of absolutely continuous functions, we can write,
\[
(\ia f) (t) = (I^{\alpha+1}f')(t)+(\ia f(0))(t)=I(I^{\alpha}f')(t)+c_{0}f(0)t^{\alpha}.
\]
After taking the derivative of both sides we arrive at the desired statement.
\end{proof}

\subsection{Estimates}
We will establish two lemmas, which appears to be essential in the proof of Lemma~\ref{defX}.
\begin{lemma}
We assume that $v \in C([0,T];\rr^{m})$ and let $g_{1}(\tau) =\izta \tap \df (p,\tau) v(p)dp $.
Then, there exists $\tilde{g}_{1}$, an absolutely continuous representative of integrable function $g_1$.
\label{lemtwo}
\end{lemma}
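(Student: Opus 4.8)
The plan is to produce the absolutely continuous representative explicitly, as the indefinite integral of an $L^{1}$ candidate for $g_{1}'$. Note first that $g_1$ is bounded (hence integrable) with $g_{1}(0)=0$: bounding $|\df(p,\tau)|$ by (\ref{ja}), using $\dos(p)\le c_{0}p^{\alpha-1}$ from (\ref{zalA}) and $|s(\tau)-s(p)|\le c_{0}(\tau-p)^{\alpha}$ from (\ref{ib}), the singular factors cancel and $|g_{1}(\tau)|\le c_{0}\|v\|_{C([0,T])}\,\tau^{\alpha}$. Next I would write down the formal $\tau$-derivative supplied by Leibniz' rule,
\[
\phi(\tau)=-\alpha\izta \tapj \df(p,\tau)v(p)\,dp+\izta \tap \df_{\tau}(p,\tau)v(p)\,dp,
\]
the boundary contribution $\lim_{p\to\tau^{-}}\tap\df(p,\tau)v(p)$ being zero by (\ref{og}). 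It then suffices to prove that $\phi\in L^{1}(0,T)$ and that $g_{1}(\tau)=\izta \phi(\sigma)\,d\sigma$ for every $\tau$; the latter identity exhibits $g_{1}$ as an absolutely continuous function.

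The heart of the argument, and the only delicate point, is the integrability of the first, hypersingular term of $\phi$, whose kernel $\tapj$ is not integrable by itself. Here I would exploit the two complementary bounds on $s(\tau)-s(p)$: splitting the inner integral at $p=\tau/2$, I use (\ref{ib}) on $(0,\tau/2)$, where the factor $(\tau-p)^{-1}$ remaining after cancellation is harmless, and (\ref{od}) on $(\tau/2,\tau)$, where the extra factor $(\tau-p)$ tames the singularity at $p=\tau$, in both regions together with $\dos(p)\le c_{0}p^{\alpha-1}$. This yields $\izta \tapj|\df(p,\tau)|\,dp\le c_{0}\tau^{\alpha-1}$. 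The second term is easier: (\ref{jb}) and a Beta integral give $\izta \tap|\df_{\tau}(p,\tau)|\,dp\le c_{0}\tau^{\alpha-1}$. Hence $|\phi(\tau)|\le c_{0}\|v\|_{C([0,T])}\,\tau^{\alpha-1}\in L^{1}(0,T)$.

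Finally, to identify $\izta\phi(\sigma)\,d\sigma$ with $g_{1}(\tau)$, I would write $\phi(\sigma)=\int_{0}^{\sigma}\partial_{\sigma}\big[(\sigma-p)^{-\alpha}\df(p,\sigma)\big]v(p)\,dp$ and interchange the order of integration over the triangle $\{0<p<\sigma<\tau\}$; this is licensed by Fubini, since the bound just obtained shows the integrand is absolutely integrable there. For each fixed $p>0$ the map $\sigma\mapsto(\sigma-p)^{-\alpha}\df(p,\sigma)$ is absolutely continuous on $[p,\tau]$ (its derivative is integrable by the same estimates, and $\df(p,\cdot)$ is absolutely continuous in its second variable by (\ref{Dtil}) and (\ref{zalA})), with limit $0$ at $\sigma=p^{+}$ by (\ref{og}). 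The fundamental theorem of calculus then collapses the inner integral $\int_{p}^{\tau}\partial_{\sigma}[(\sigma-p)^{-\alpha}\df(p,\sigma)]\,d\sigma$ to $\tap\df(p,\tau)$, so that $\izta\phi(\sigma)\,d\sigma=\izta\tap\df(p,\tau)v(p)\,dp=g_{1}(\tau)$. Thus $g_{1}$ coincides with the absolutely continuous function $\tilde{g}_{1}(\tau):=\izta\phi(\sigma)\,d\sigma$, the required representative. The structure parallels Lemma~\ref{lemtwod}; the difference is that $v$ is merely continuous, which is precisely why the derivative is transferred onto the kernel rather than onto $v$.
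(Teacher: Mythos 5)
Your proof is correct; it relies on the same two kernel estimates as the paper, but it packages the argument in a genuinely different way, so a comparison is worthwhile. The paper proves $g_{1}\in W^{1,1}(0,T)$: it establishes the weak-derivative identity (\ref{oc}) by testing against $\varphi\in C^{\infty}_{0}(0,T)$, justifies the integration by parts via Fubini and the absolute continuity of $\Psi_{p}(\tau)=(\tau-p)^{-\alpha}\widetilde{D}(p,\tau)$ for fixed $p$, and then cites the Evans--Gariepy theorem to extract an absolutely continuous representative. You skip the distributional detour entirely and prove the pointwise identity $g_{1}(\tau)=\int_{0}^{\tau}\phi(\sigma)\,d\sigma$ for every $\tau$, by Fubini over the triangle and the fundamental theorem of calculus applied to the very same function $\Psi_{p}$; this is more elementary, needs no external theorem, and yields slightly more, namely that $g_{1}$ itself is absolutely continuous (not merely a.e.\ equal to an AC function), together with the pointwise bound $|g_{1}'(\tau)|\le c_{0}\|v\|_{C([0,T];\rr^{m})}\tau^{\alpha-1}$. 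The treatment of the hypersingular term also differs: the paper interpolates (\ref{ib}) and (\ref{od}) into the inequality (\ref{oe}) with a parameter $\gamma\in(0,\alpha/(1-\alpha))$ and checks finiteness of the iterated integral, whereas you split the inner integral at $p=\tau/2$ and use (\ref{ib}) on $(0,\tau/2)$ and (\ref{od}) on $(\tau/2,\tau)$; both are valid, and your splitting gives the cleaner pointwise-in-$\tau$ estimate. Two minor points of care: (\ref{og}) is literally a limit in the \emph{second} argument of $\widetilde{D}$, while the Leibniz boundary term and the FTC endpoint require $\lim_{p\to\tau^{-}}(\tau-p)^{-\alpha}\widetilde{D}(p,\tau)=0$ --- this follows from (\ref{ja}) and (\ref{od}) by the same computation, and the paper itself cites (\ref{og}) in this loose way; and your FTC step correctly rests on the $C^{1}$ regularity of $\Psi_{p}$ on $(p,\tau]$ combined with the integrable derivative and the zero limit at $p^{+}$, which matters because an integrable a.e.\ derivative alone would not imply absolute continuity.
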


\begin{proof}
It is sufficient to 
show that $g_{1} \in W^{1,1}(0,T)$, i.e.
\eqq{\int_{0}^{T} \varphi'(\tau) g_{1}(\tau) \dt = - \int_{0}^{T} \varphi(\tau) h_{1}(\tau) \dt \hd \hd \mbox{ for every }  \varphi \in C^{\infty}_{0}(0,T),}{oc}
where
\eqq{\izta [\tap \df(p,\tau)]_{\tau} v(p) dp=
h_{1}(\tau)
\in L^{1}(0,T).}{lb}
For this purpose, 
it is enough to show that
\eqq{\tau \mapsto \izta \tapj \df(p,\tau)  v(p) dp\in L^{1}(0,T),}{lc}
\eqq{ \tau \mapsto \izta \tap (\df(p,\tau))_{\tau}  v(p)dp\in L^{1}(0,T).}{ld}
From (\ref{ja}) we have
\[
\izT \left| \izta \tapj \df(p,\tau) v(p) dp \right| \dt \leq c_{0}\izT  \izta \tapj \dos(p) [s(\tau)-s(p)] dp  \dt.
\]
Using (\ref{ib}) and (\ref{od}), we may write that
\eqq{|s(\tau)-s(p)| \leq c_{0} p^{\gamma(\alpha-1)} |\tau - p|^{\gamma+(1-\gamma)\alpha} \hd \mbox{ for } p<\tau, \hd }{oe}
where $\gamma \in (0,1)$ will be chosen below.
Hence, using (\ref{zalA}), we can estimate the above integral by
\[
c_{0}\izT  \izta   (\tau - p )^{\gamma(1-\alpha)-1} p^{(\alpha-1)(\gamma+1)} dp  \dt,
\]
which is finite, provided  that $0<\gamma <\frac{\alpha}{1-\alpha}$. Thus,  (\ref{lc}) follows. Now, we shall show  (\ref{ld}). Using (\ref{jb}), we get
\begin{eqnarray*}
\izT \left| \izta \tap (\df(p,\tau))_{\tau}  v(p) dp \right| \dt &\leq& c_{0}  \izT \dot{s}(\tau)  \izta \tap \dos(p) dp  \dt
\\
 &\leq &c_{0}  \izT \tau^{\alpha-1}  \izta \tap p^{\alpha-1} dp  \dt
<\infty.
\end{eqnarray*}
In this way we proved (\ref{ld}) and  (\ref{lb}).

We are going to show (\ref{oc}). By Fubini Theorem we have,
\begin{equation}\label{410m}
 \int_{0}^{T} \varphi'(\tau) \izta \tap \df (p,\tau) v(p) dp  \dt =
\int_{0}^{T} v(p)\int_{p}^{T} \varphi'(\tau) \tap \df (p,\tau)   \dt dp.
\end{equation}
In order to integrate by parts, we have to show that  for each $p\in (0,T)$ function $\Psi_{p}$, defined below,
is absolutely continuous on $[0,T]$.
\eqq{
\Psi_{p}(\tau) = \left\{
\begin{array}{cll}
\tap \df(p, \tau) & \mbox{ for } & \tau >p,  \\
0 & \mbox{ for } & \tau \leq p.
\end{array} \right.
}{le}
For this purpose, we will show that  $\Psi_{p}'(\tau) \in L^{1}(0,T)$ for each $p \in (0,T)$.
From (\ref{ja}) and (\ref{od}) we infer,
\[
\int_{p}^{T}|(\tau-p)^{-\alpha-1}\widetilde{D}(p,\tau)|d\tau \leq c_{0}\dot{s}(p)p^{\alpha-1}\int_{p}^{T}(\tau-p)^{-\alpha}d\tau.
\]
Using (\ref{jb}) and (\ref{zalA}) we obtain
\[
\int_{p}^{T}|(\tau-p)^{-\alpha}\widetilde{D}(p,\tau)_{\tau}|d\tau \leq c_{0}\dot{s}(p)\int_{p}^{T}(\tau-p)^{-\alpha}\tau^{\alpha-1}d\tau.
\]
Thus, we conclude that $\Psi_{p}'(\tau)$ is integrable function for every fixed $p \in (0,T)$.
Hence, we may integrate by parts in (\ref{410m}) and (\ref{oc}) follows. As a result, $g_{1} \in W^{1,1}(0,T)$ and by \cite[Theorem~1, Ch. 4.9.1]{Evans}, 
$g_{1}$ is equal a.e. in $[0,T]$ to an absolutely continuous $\tilde{g}_{1}$.

Finally,  $\tilde{g}_{1}(0)=0$, because of  (\ref{ja}) and (\ref{ib}), for any sequence $\tau_{n}\searrow 0$, we get
\eqq{
|\tilde{g}_{1}(\tau_{n})|\leq c_{0} \int_{0}^{\tau_{n}}(\tau_{n}-p)^{-\alpha} \dos (p)[s(\tau_{n})-s(p)]dp \leq c_{0} [s(\tau_{n})-s(0)] \longrightarrow 0.
}{lf}
\end{proof}

\medskip\noindent{\bf Lemma \ref{lemtwod}.}\ {\it
We assume that $p^{1-\alpha}w(p)\in L^{\infty}(0,T)$ and let $g_{2}(\tau) =\izta \tap B (p,\tau) w(p) dp $. Then, $g_{2}=\tilde{g}_{2}$ a.e. on $[0,T]$, where  $\tilde{g}_{2} \in AC[0,T]$ and $\tilde{g}_{2}(0)=0$.}

\begin{proof}
As in the previous Lemma, we will show that $g_{2} \in W^{1,1}(0,T)$, i.e.
\eqq{\int_{0}^{T} \varphi'(\tau) g_{2}(\tau) \dt = - \int_{0}^{T} \varphi(\tau) h_{2}(\tau) \dt \hd \hd \mbox{ for }  \varphi \in C^{\infty}_{0}(0,T),}{noc}
where
\eqq{h_{2}(\tau)= \izta [\tap B(p,\tau)]_{\tau} w(p) dp\in L^{1}(0,T).}{nlb}
We will show that
\eqq{\tau \mapsto \izta \tapj B(p,\tau)  w(p)dp\in L^{1}(0,T),}{nlc}
\eqq{\tau \mapsto \izta \tap (B(p,\tau))_{\tau} w(p) dp\in L^{1}(0,T).}{nld}
From (\ref{eg}) and the assumption concerning $w$, we have,
\[
\izT \left| \izta \tapj B(p,\tau)  w(p)dp \right| \dt \leq c_{0}\izT  \izta \tapj p^{\alpha-1} [s(\tau)-s(p)] dp  \dt.
\]
We apply (\ref{oe}) and proceed as in the proof of Lemma~\ref{lemtwo} to deduce that the integral on the right-hand-side above is finite.

We are going to show  (\ref{nld}). Using (\ref{egt}) and the estimate for $w$ we get,
\begin{eqnarray*}
 \izT \left| \izta \tap (B(p,\tau))_{\tau} w(p)  dp \right| \dt
&\leq &c_{0}  \izT \dos(\tau)  \izta \tap p^{\alpha-1} dp  \dt\\
&=& c_{0} [s(T)-s(0)]<\infty.
\end{eqnarray*}
Thus, we are able to deduce (\ref{nld}) and  (\ref{nlb}).

It remains to show (\ref{noc}). By the Fubini theorem, we have
\[
\int_{0}^{T} \varphi'(\tau) \izta \tap B (p,\tau) dp w(p) \dt =
\int_{0}^{T} w(p)\int_{p}^{T} \varphi'(\tau) \tap B (p,\tau)   \dt dp.
\]
As in the proof of Lemma~\ref{lemtwo}, we will show that  for each $p\in (0,T)$ function, defined
below,
\eqq{
\Phi_{p}(\tau) = \left\{ \begin{array}{cll} \tap B(p, \tau) & \mbox{ for } & \tau >p  \\ 0 & \mbox{ for } & \tau \leq p \\ \end{array} \right.
}{nle}
has integrable derivative on $(0,T)$.
Making use of (\ref{eg}) and (\ref{od}) we get
\[
\int_{p}^{T}|(\tau-p)^{-\alpha-1}B(p,\tau)|d\tau \leq c_{0}p^{\alpha-1}\int_{p}^{T}(\tau-p)^{-\alpha}d\tau.
\]
Applying (\ref{egp}) and (\ref{zalA}) we obtain the following estimate
\[
\int_{p}^{T}|(\tau-p)^{-\alpha}B(p,\tau)_{\tau}|d\tau \leq c_{0} \int_{p}^{T}(\tau-p)^{-\alpha}\tau^{\alpha-1}d\tau.
\]
Thus, following the argument as in
the proof of Lemma~\ref{lemtwo}, we  obtain the claim of the Lemma~\ref{lemtwod}.
\end{proof}

Now, we will prove  estimates, which are crucial for proofs of Lemma~\ref{kont} and Lemma~\ref{konti}.

\begin{lemma}
We assume that $\alpha\in (0,1)$ and $s$ satisfies (\ref{zalA}). If function
$Q_{1,1}$ is defined in (\ref{qa}), $Q_{1,2}$ is given by (\ref{qd}),
then there exists a  constant $c_{0}=c_{0}(\alpha)$
such that the following inequalities hold,
\eqq{\lim_{t_{2} \rightarrow t_{1}^{+}} Q_{1,1}(t_{1},t_{2})=0 \hd \mbox{ for } \hd t_{1} \geq 0, }{qb}
\eqq{|Q_{1,1}(0, t)| \leq c_{0} [s(t)-s(0)],}{qc}
\eqq{|Q_{1,2}(t_{1}, t_{2})| \leq c_{0} |t_{2}-t_{1}|^{\alpha}.}{qe}
\label{estiQjj}
\end{lemma}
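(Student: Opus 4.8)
The plan is to treat the three claims in increasing order of difficulty, establishing (\ref{qe}) first, then deducing (\ref{qb}) from it, and finally proving (\ref{qc}), which I expect to be the genuinely hard one. I would begin with $Q_{1,2}$, whose inner integral is a pure constant: by the substitution $p=\tau\sigma$ together with (\ref{dodb})--(\ref{dodc}) one has $\izta\tap p^{\alpha-1}\,dp=B(1-\alpha,\alpha)$, so that
\[
Q_{1,2}(t_1,t_2)=B(1-\alpha,\alpha)\,t_2^{1-\alpha}\iztjd\tamjd\tau^{\alpha-1}\dt.
\]
Substituting $\tau=t_2\sigma$ and writing $\theta=t_1/t_2$, this equals $c_0\,t_2^{\alpha}\int_{\theta}^{1}(1-\sigma)^{\alpha-1}\sigma^{\alpha-1}\,d\sigma$, so (\ref{qe}) reduces to the elementary inequality $\int_{\theta}^{1}(1-\sigma)^{\alpha-1}\sigma^{\alpha-1}\,d\sigma\le c_0(1-\theta)^{\alpha}$. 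For $\theta\ge\frac12$ I would bound $\sigma^{\alpha-1}\le 2^{1-\alpha}$ on $(\theta,1)$ and integrate the factor $(1-\sigma)^{\alpha-1}$; for $\theta<\frac12$ the left side is at most $B(\alpha,\alpha)$ while $(1-\theta)^{\alpha}\ge 2^{-\alpha}$, so the bound is immediate.

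For (\ref{qb}) I would denote the inner integral of (\ref{qa}) by $G(\tau)=\izta\tapj[s(\tau)-s(p)]p^{\alpha-1}\,dp$. Using (\ref{ib}) in the form $s(\tau)-s(p)\le c_0(\tau^{\alpha}-p^{\alpha})$ and the substitution $p=\tau\sigma$ gives $G(\tau)\le c_0\tau^{\alpha-1}\izj(1-\sigma)^{-\alpha-1}(1-\sigma^{\alpha})\sigma^{\alpha-1}\,d\sigma$; the last integral is finite because near $\sigma=1$ the factor $1-\sigma^{\alpha}\sim\alpha(1-\sigma)$ cancels one power of the singularity $(1-\sigma)^{-\alpha-1}$. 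Thus $G(\tau)\le c_0\tau^{\alpha-1}$, whence $Q_{1,1}(t_1,t_2)\le c_0 Q_{1,2}(t_1,t_2)$, and (\ref{qb}) follows at once from (\ref{qe}).

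The difficulty is concentrated in (\ref{qc}): the crude bound $G(\tau)\le c_0\tau^{\alpha-1}$ only yields $Q_{1,1}(0,t)\le c_0 t^{\alpha}$, which is \emph{not} controlled by $c_0[s(t)-s(0)]$ (indeed it discards the fact that $Q_{1,1}\equiv 0$ when $s$ is constant). To recover the factor $s(t)-s(0)=\izt\dot{s}(r)\,dr$ I would keep the increment of $s$ explicit, writing $s(\tau)-s(p)=\int_p^{\tau}\dot{s}(r)\,dr$ and applying Fubini. After the substitution $p=\tau\sigma$ this gives $G(\tau)=\tau^{-1}\izta\dot{s}(r)\,\psi(r/\tau)\,dr$ with $\psi(\rho)=\int_0^{\rho}(1-\sigma)^{-\alpha-1}\sigma^{\alpha-1}\,d\sigma$. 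Splitting at $\rho=\frac12$ one obtains the two-sided-singular bound $\psi(\rho)\le c_0\,\rho^{\alpha}(1-\rho)^{-\alpha}$, and hence $G(\tau)\le c_0\tau^{-1}\izta\dot{s}(r)\,r^{\alpha}(\tau-r)^{-\alpha}\,dr$. Inserting this into (\ref{qa}) with $t_1=0$ and exchanging the order of integration once more, the claim collapses, after scaling out $\tau$, to
\[
Q_{1,1}(0,t)\le c_0\izt\dot{s}(r)\,\beta^{\alpha}N(\beta)\,dr,\qquad \beta=\tfrac{r}{t},\qquad N(\beta)=\int_{\beta}^{1}(1-\xi)^{\alpha-1}\xi^{-1}(\xi-\beta)^{-\alpha}\,d\xi,
\]
so everything comes down to the scale-invariant bound $\beta^{\alpha}N(\beta)\le c_0$ on $(0,1)$.

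I expect this last, purely one-dimensional estimate to be the main obstacle, because $N(\beta)$ carries integrable singularities at both ends, at $\xi=\beta$ (from $(\xi-\beta)^{-\alpha}$) and at $\xi=1$ (from $(1-\xi)^{\alpha-1}$), and one must track the competition between the blow-up of $N$ and the prefactor $\beta^{\alpha}$ as $\beta\to 0$. I would settle it through the two endpoints. The substitution $\xi=\beta y$ shows that $\beta^{\alpha}N(\beta)\to\int_1^{\infty}y^{-1}(y-1)^{-\alpha}\,dy=B(\alpha,1-\alpha)$ as $\beta\to 0^{+}$, while as $\beta\to 1^{-}$ the factor $\xi^{-1}\to 1$ and $\int_{\beta}^{1}(1-\xi)^{\alpha-1}(\xi-\beta)^{-\alpha}\,d\xi=B(\alpha,1-\alpha)$, so $\beta^{\alpha}N(\beta)$ tends to the same finite value at both ends. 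Since $\beta\mapsto\beta^{\alpha}N(\beta)$ is continuous on $(0,1)$ with finite one-sided limits, it is bounded by a constant $c_0=c_0(\alpha)$; integrating against $\dot{s}(r)\ge 0$ then yields precisely $Q_{1,1}(0,t)\le c_0\izt\dot{s}(r)\,dr=c_0[s(t)-s(0)]$, which is (\ref{qc}).
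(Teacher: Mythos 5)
Your proposal is correct, and it differs from the paper's proof in an interesting way in two of the three claims. For (\ref{qe}) you do essentially what the paper does. For (\ref{qb}) your route is genuinely different and shorter: you invoke (\ref{ib}) to get $s(\tau)-s(p)\le c_{0}(\tau^{\alpha}-p^{\alpha})$, deduce that the inner integral of $Q_{1,1}$ is at most $c_{0}\tau^{\alpha-1}$ (the integral $\int_{0}^{1}(1-\sigma)^{-\alpha-1}(1-\sigma^{\alpha})\sigma^{\alpha-1}d\sigma$ is indeed finite), and so $Q_{1,1}\le c_{0}Q_{1,2}\to 0$; the paper instead decomposes $Q_{1,1}=Q_{1,1,1}+Q_{1,1,2}$ by Fubini and kills $Q_{1,1,1}$ with an absolute-continuity-of-the-integral argument --- a decomposition it needs anyway for (\ref{qc}), so the two claims share one computation there. (Your constant in this step depends on $\|t^{1-\alpha}\dot{s}\|_{C[0,T]}$ rather than on $\alpha$ alone, which is harmless for a limit statement.) For (\ref{qc}) your skeleton --- write $s(\tau)-s(p)=\int_{p}^{\tau}\dot{s}(r)dr$, apply Fubini twice, scale out $\tau$ and $t$ --- is exactly the paper's; note that your $\psi$ is computed in the paper in closed form, $\psi(\rho)=\alpha^{-1}\rho^{\alpha}(1-\rho)^{-\alpha}$, since $\frac{d}{db}\left[(1-b)^{-\alpha}b^{\alpha}\right]=\alpha(1-b)^{-\alpha-1}b^{\alpha-1}$. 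The real divergence is the finishing move: the paper evaluates your quantity $\beta^{\alpha}N(\beta)$ exactly, showing via the substitution $b=\frac{1-a}{a-x}$ that $G_{1}(x)=x^{\alpha}\int_{x}^{1}(1-a)^{\alpha-1}a^{-1}(a-x)^{-\alpha}da=\int_{0}^{\infty}\frac{a^{\alpha-1}}{1+a}da=\frac{\pi}{\sin\pi\alpha}$, see (\ref{oszag}); so what you bound is in fact a constant. Your alternative --- continuity on $(0,1)$ plus finite one-sided limits --- is valid, but one step needs filling in: after $\xi=\beta y$ you have $\beta^{\alpha}N(\beta)=\int_{1}^{1/\beta}(1-\beta y)^{\alpha-1}y^{-1}(y-1)^{-\alpha}dy$, and since $(1-\beta y)^{\alpha-1}\ge 1$ blows up at the moving endpoint $y=1/\beta$, dominated convergence does not apply on the whole range; one must split at $y=1/(2\beta)$, where the tail is $O(\beta^{\alpha})$, before passing to the limit. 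That repair is routine, so this is a point of rigor rather than a gap, but the paper's exact evaluation avoids it altogether and delivers the explicit constant $c_{0}(\alpha)$ asserted in the lemma.
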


\begin{proof}
\textbf{Case of $Q_{1,1}$. } With the the Fubini theorem of can write
\begin{eqnarray*}
Q_{1,1}(t_{1},t_{2}) &=&t^{1-\alpha }_{2} \iztjd \tamjd \izta \tapj \int_{p}^{\tau} \dot{s}(q) dq p^{\alpha-1}dp \dt
\\
&=&t^{1-\alpha }_{2} \iztjd \tamjd \izta \dot{s}(q) \int_{0}^{q} \tapj    p^{\alpha-1} dp dq \dt.
\end{eqnarray*}
Applying substitution $\tau = t_{2}a$ and then $p=t_{2}ab$ we get
\begin{eqnarray*}
Q_{1,1}(t_{1},t_{2}) &=&
t_{2} \izjjd (1-a)^{\alpha-1}  \int_{0}^{t_{2}a} \dot{s}(q) \int_{0}^{q} (t_{2}a-p)^{-\alpha-1}    p^{\alpha-1} dp dq da
\\
&= &\izjjd (1-a)^{\alpha-1}  a^{-1}\int_{0}^{t_{2}a} \dot{s}(q) \int_{0}^{\frac{q}{t_{2}a}} (1-b)^{-\alpha-1}    b^{\alpha-1} db dq da
\\
&= &\izjjd (1-a)^{\alpha-1}  a^{-1}\int_{0}^{t_{2}a} \dot{s}(q) \left. \left[ \alpha^{-1}(1-b)^{-\alpha}    b^{\alpha} \right] \right|_{b=0}^{b=\frac{q}{t_{2}a}} dq da
\\
&= &\izjjd (1-a)^{\alpha-1}  a^{-1} \int_{0}^{t_{2}a}  q^{\alpha} \dot{s}(q) (t_{2}a-q)^{-\alpha} dq da.
\end{eqnarray*}
Making use of the Fubini theorem again, we have
\begin{eqnarray*}
 Q_{1,1}(t_{1},t_{2}) &=&
 \int_{0}^{t_{1}}   \dot{s}(q) q^{\alpha}\izjjd (1-a)^{\alpha-1}  a^{-1}  (t_{2}a-q)^{-\alpha} da dq
\\
&&+\iztjd  \dot{s}(q)  q^{\alpha} \int_{\frac{q}{t_{2}}}^{1} (1-a)^{\alpha-1}  a^{-1} (t_{2}a-q)^{-\alpha} da dq\\
&\equiv & Q_{1,1,1}(t_{1},t_{2})+Q_{1,1,2}(t_{1},t_{2}).
\end{eqnarray*}
Obviously, $Q_{1,1,1}(0,t_{2})=0$. Therefore, we have to prove the continuity of $Q_{1,1,1}(t_{1}, t_{2})$ for $t_{1}>0$. After having substituted $b=\frac{1-a}{a-q/t_{2}}$  and  $a=(q/t_{2})b$, we get
\[
Q_{1,1,1}(t_{1},t_{2}) = \iztj \dos (q)  (q/t_{2})^{\alpha}\int_{0}^{\frac{t_{2}-t_{1}}{t_{1}-q} } \frac{b^{\alpha-1}}{1+\frac{q}{t_{2}}b} db dq =
\iztj \dos (q)  \int_{0}^{\frac{t_{2}-t_{1}}{t_{2}}\frac{q}{t_{1}-q}} \frac{a^{\alpha-1}}{1+a} da dq .
\]
We shall show that above integral is arbitrary small if $t_{2}$ is sufficiently close to $t_{1}$. Indeed, if we fix $\ep >0$, then there exists $c\in (0,t_{1})$ such that
\[
\int_{c}^{t_{1}} \dos (q)  \int_{0}^{\frac{t_{2}-t_{1}}{t_{2}}\frac{q}{t_{1}-q}} \frac{a^{\alpha-1}}{1+a} da dq  < \frac{\ep}{2},
\]
because we are dealing with integrable functions.
Then,
\begin{eqnarray*}
\int_{0}^{c} \dos (q)  \int_{0}^{\frac{t_{2}-t_{1}}{t_{2}}\frac{q}{t_{1}-q}} \frac{a^{\alpha-1}}{1+a} da dq  &\leq& \int_{0}^{c} \dos (q)  \int_{0}^{\frac{t_{2}-t_{1}}{t_{2}}\frac{c}{t_{1}-c}} \frac{a^{\alpha-1}}{1+a} da dq\\
&\leq &[s(c)-s(0)]\int_{0}^{\frac{t_{2}-t_{1}}{t_{2}}\frac{c}{t_{1}-c}} \frac{a^{\alpha-1}}{1+a} da <\frac{\ep}{2},
\end{eqnarray*}
if $t_{2}$ is sufficiently close to $t_{1}$. Therefore,
\[
\lim_{t_{2}\rightarrow t_{1}^{+}}Q_{1,1,1}(t_{1},t_{2})=0.
\]
In order to estimate $Q_{1,1,2}$, we notice that
\[
Q_{1,1,2}(t_{1},t_{2})=
\iztjd \dot{s}(q) G_{1}\left(\frac{q}{t_{2}}\right) dq,
\]
where
\[
G_{1}(x)=x^{\alpha}\int_{x}^{1}(1-a)^{\alpha-1}  a^{-1}  ( a-x)^{-\alpha} da.
\]
The last integral can be easily evaluated. Substituting $b=\frac{1-a}{a-x}$ and next $a=xb$, we get
\eqq{
G_{1}(x) =  x^{\alpha}\int_{0}^{\infty} \frac{b^{\alpha-1}}{1+bx} db
=\int_{0}^{\infty} \frac{a^{\alpha-1}}{1+a} da= \frac{\pi}{\sin{\pi \alpha}}
 .}{oszag}
Hence,
\[
Q_{1,1,2}(t_{1},t_{2}) \leq  \sup_{x\in (0,1)}{G_{1}(x)} \iztjd \dot{s}(q) =c_{0} [s(t_{2})-s(t_{1})].
\]
Thus,
\[
\lim_{t_{2}\rightarrow t_{1}}Q_{1,1}(t_{1},t_{2})= \lim_{t_{2}\rightarrow t_{1}}(Q_{1,1,1}(t_{1},t_{2})+Q_{1,1,2}(t_{1},t_{2}))=0
\]
and
\[
|Q_{1,1}(0,t)|=|Q_{1,1,2}(0,t)| \leq c_{0} [s(t)-s(0)].
\]
Now, we shall deal with $Q_{1,2}$. We can write
 \[
Q_{1,2}(t_{1},t_{2}) = c_{0} t^{1-\alpha}_{2} \iztjd \tamjd \tau^{\alpha-1}  \dt.
\]
Applying substitution $\tau = t_{2}-(t_{2}-t_{1})a$, we get
\[
c_{0} (t_{2}-t_{1})^{\alpha } \izj a^{\alpha-1} \left(1-\frac{t_{2}-t_{1}}{t_{2}}a\right)^{\alpha-1}da \leq c_{0} (t_{2}-t_{1})^{\alpha } \izj a^{\alpha-1} \left(1-a\right)^{\alpha-1}da,
\]
and the proof is finished.
\end{proof}

\begin{lemma}
Let us assume that $\alpha\in (0,1)$ and $s$ satisfies (\ref{zalA}). If function $Q_{2,1}$ (resp. $Q_{2,2}$) is defined by (\ref{qf}) (resp. by (\ref{qg})), then
\eqq{\lim_{t_{2} \rightarrow t_{1}^{+}} Q_{2,i}(t_{1},t_{2})=0 \hd \mbox{ for } \hd t_{1} > 0, \hd i=1,2. }{qqf}
\label{estiQdd}
\end{lemma}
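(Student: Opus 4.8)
The plan is to reduce both $Q_{2,1}$ and $Q_{2,2}$ to a single scalar limit that no longer involves $s$. The guiding observation is that, in each case, after the inner $p$-integral is controlled by $c_0\tau^{\alpha-1}$, both quantities are dominated in absolute value by the common expression
\[
t_2^{1-\alpha}\int_0^{t_1}\left[(t_1-\tau)^{\alpha-1}-(t_2-\tau)^{\alpha-1}\right]\tau^{\alpha-1}\,d\tau ,
\]
whose limit as $t_2\to t_1^+$ can be computed directly. I would begin with $Q_{2,2}$, which is the cleaner case. Substituting $p=\tau u$ in its inner integral gives $\int_0^\tau (\tau-p)^{-\alpha}p^{\alpha-1}\,dp = B(\alpha,1-\alpha)$, a constant independent of $\tau$. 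Hence $Q_{2,2}(t_1,t_2)$ equals $B(\alpha,1-\alpha)$ times exactly the scalar core expression above.

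The main obstacle is the inner integral of $Q_{2,1}$, which carries the stronger singularity $(\tau-p)^{-\alpha-1}$ together with the factor $s(\tau)-s(p)$. Here neither (\ref{ib}) nor (\ref{od}) works alone: the former leaves $(\tau-p)^{-1}$, not integrable at $p=\tau$, while the latter produces $p^{2\alpha-2}$, not integrable at $0$ when $\alpha\le\frac12$. Instead I would invoke the interpolated bound (\ref{oe}), namely $|s(\tau)-s(p)|\le c_0\, p^{\gamma(\alpha-1)}|\tau-p|^{\gamma+(1-\gamma)\alpha}$, valid for any $\gamma\in(0,\frac{\alpha}{1-\alpha})$. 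With it,
\[
(\tau-p)^{-\alpha-1}p^{\alpha-1}|s(\tau)-s(p)|\le c_0\,(\tau-p)^{\gamma(1-\alpha)-1}\,p^{(\alpha-1)(1+\gamma)},
\]
and the restriction $0<\gamma<\frac{\alpha}{1-\alpha}$ makes both factors integrable (at $p=\tau$ and at $p=0$ respectively). Substituting $p=\tau u$ and adding the exponents, $\gamma(1-\alpha)-1+(\alpha-1)(1+\gamma)+1=\alpha-1$, shows that the inner integral is bounded by $c_0\tau^{\alpha-1}$. Since the bracket $(t_1-\tau)^{\alpha-1}-(t_2-\tau)^{\alpha-1}$ is nonnegative for $\tau<t_1<t_2$, I may pull the absolute value inside and conclude that $|Q_{2,1}(t_1,t_2)|$ is bounded by $c_0$ times the same scalar core.

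It then remains to dispatch the common core. For fixed $t_1>0$ the prefactor satisfies $t_2^{1-\alpha}\to t_1^{1-\alpha}<\infty$, so it suffices to show the integral tends to $0$. Its integrand is nonnegative, and since $\alpha-1<0$ with $t_2-\tau>t_1-\tau>0$ gives $(t_2-\tau)^{\alpha-1}\le(t_1-\tau)^{\alpha-1}$, it is dominated by the integrable function $(t_1-\tau)^{\alpha-1}\tau^{\alpha-1}$ uniformly for $t_2\in(t_1,2t_1)$; moreover, for each fixed $\tau\in(0,t_1)$ it tends to $0$ as $t_2\to t_1^+$. Dominated convergence then yields the limit $0$, establishing (\ref{qqf}) for $i=2$ and, via the bound of the previous paragraph, for $i=1$ as well. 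The crux of the whole argument is the single-exponent interpolation step for $Q_{2,1}$; the remaining steps are substitutions and a routine passage to the limit.
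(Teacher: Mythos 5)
Your proof is correct, and for $Q_{2,1}$ it takes a genuinely different route from the paper. For $Q_{2,2}$ the two arguments coincide: the inner integral is the constant $B(\alpha,1-\alpha)$, and the remaining scalar integral tends to zero by dominated convergence. For $Q_{2,1}$, however, the paper does not estimate the inner integral at all; it writes $s(\tau)-s(p)=\int_p^\tau\dot s(q)\,dq$, applies Fubini twice together with the scaling substitutions $\tau=t_1a$, $p=t_1ab$, evaluates the resulting Beta-type integrals in closed form (using the primitive of $(1-b)^{-\alpha-1}b^{\alpha-1}$ and the function $G_1(x)\equiv\pi/\sin\pi\alpha$ of (\ref{oszag})), and finally applies dominated convergence in the $q$-variable against the dominating function $c_0\,\dot s(q)$. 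You instead bound the singular inner integral directly by $c_0\tau^{\alpha-1}$ via the interpolation inequality (\ref{oe}) — the same device the paper itself uses in the proof of Lemma~\ref{lemtwo} — exploiting that the exponents sum to $\alpha-1$ and that $0<\gamma<\alpha/(1-\alpha)$ makes the Beta integral finite; this collapses $Q_{2,1}$ to the same scalar core as $Q_{2,2}$ and finishes with a single routine dominated-convergence step. Your version is shorter and unifies the two cases; what the paper's explicit computation buys is quantitative control of $Q_{2,1}$ by increments of $s$ (of the type $c_0[s(t_1)-s(0)]$), mirroring the bounds it needs for $Q_{1,1}$ in Lemma~\ref{estiQjj}, but that extra precision is not required for the limit statement proved here. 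One cosmetic point: since (\ref{oe}) is derived for $\gamma\in(0,1)$, you should take $\gamma\in\bigl(0,\min\{1,\tfrac{\alpha}{1-\alpha}\}\bigr)$; this interval is nonempty for every $\alpha\in(0,1)$, so nothing is lost.
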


\begin{proof}
We first deal with $Q_{2,1}$,
\[
Q_{2,1}(t_{1},t_{2})= t^{1-\alpha}_{2} \iztj [\tamjj -\tamjd] \izta   \tapj p^{\alpha-1} \int_{p}^{\tau} \dos(q) dqdp  \dt.
\]
Applying the Fubini theorem and substitutions $\tau = t_{1} a$ and next $p=t_{1}ab$, we see that
\begin{eqnarray*}
 Q_{2,1}(t_{1},t_{2})&=&
 t^{1-\alpha}_{2} \iztj [\tamjj -\tamjd] \izta \dos(q) \int_{0}^{q}  \tapj p^{\alpha-1}   dpdq  \dt
\\
&= &t^{1-\alpha}_{2} t_{1} \izj [(t_{1}-t_{1}a)^{\alpha-1} -(t_{2}-t_{1}a)^{\alpha-1}] \int^{t_{1}a}_{0} \dos(q) \int_{0}^{q}  (t_{1}a-p)^{-\alpha-1} p^{\alpha-1}   dpdq  da
\\
&=&t^{1-\alpha}_{2} t_{1}^{\alpha-1} \izj [(1-a)^{\alpha-1} -(\frac{t_{2}}{t_{1}}-a)^{\alpha-1}]a^{-1} \int^{t_{1}a}_{0} \dos(q) \int_{0}^{\frac{q}{t_{1}a}}  (1-b)^{-\alpha-1} b^{\alpha-1}   dbdq  da
\\
&=&\alpha \left( \frac{t_{2}}{t_{1}}\right)^{1-\alpha} \izj [(1-a)^{\alpha-1} -(\frac{t_{2}}{t_{1}}-a)^{\alpha-1}] a^{-1}\int^{t_{1}a}_{0} \dos(q)   (1-b)^{-\alpha} b^{\alpha}\Big|_{b=0}^{b=\frac{q}{t_{1}a}}   dq  da
\\
&=&\alpha \left( \frac{t_{2}}{t_{1}}\right)^{1-\alpha} \izj [(1-a)^{\alpha-1} -(\frac{t_{2}}{t_{1}}-a)^{\alpha-1}] a^{-1}\int^{t_{1}a}_{0} \dos(q)   \left(\frac{t_{1}a}{q}-1\right)^{-\alpha} dq  da.
\end{eqnarray*}
Using again the Fubini theorem, we get,
\[
Q_{2,1}(t_{1},t_{2}) =\alpha \left( \frac{t_{2}}{t_{1}}\right)^{1-\alpha} \iztj  \dos(q)
 \left( \frac{q}{t_{1}}\right)^{\alpha} \int^{1}_{\frac{q}{t_{1}}} [(1-a)^{\alpha-1} -(\frac{t_{2}}{t_{1}}-a)^{\alpha-1}] a^{-1}    \left(a-\frac{q}{t_{1}}\right)^{-\alpha} da  dq.
\]
By (\ref{oszag}) we arrive at
\begin{eqnarray*}
Q_{2,1}(t_{1},t_{2})&=&
\alpha\iztj \int^{1}_{\frac{q}{t_{1}}}  \dos(q)
 \left( \frac{q}{t_{1}}\right)^{\alpha}  (1-a)^{\alpha-1}  a^{-1}    \left(a-\frac{q}{t_{1}}\right)^{-\alpha} da  dq\\
& = &\alpha\iztj \dos(q) G_{1}\left(\frac{q}{t_{1}} \right) dq \leq c_{0} [s(t_{1})-s(0)]<\infty.
\end{eqnarray*}
Thus, we can apply Lebesgue dominated convergence theorem and we see that
\[
\lim_{t_{2}\rightarrow t_{1}^{+}}Q_{2,1}(t_{1},t_{2})=0.
\]
Now, we estimate $Q_{2,2}$. We see,
\[
|Q_{2,2}(t_{1},t_{2})| = c_{0} t^{1-\alpha}_{2} \iztj [\tamjj -\tamjd] \tau^{\alpha-1} \dt.
\]
Thus, applying Lebesgue dominated convergence theorem, we get
$\lim_{t_{2}\rightarrow t_{1}^{+}}Q_{2,2}(t_{1},t_{2})=0$ and the proof is finished.
\end{proof}

\begin{lemma}
Let us assume that $\alpha \in (0,1)$, $f\in AC[0,T]$ and $t^{1-\alpha}f'\in L^{\infty}(0,T)$. Then,
\eqq{|t^{1-\alpha}_{2} (\ia f')(t_{2})-t^{1-\alpha}_{1} (\ia f')(t_{1})|\leq c_{0} \| t^{1-\alpha} f' \|_{L^{\infty}(0,T)} |t_{2}-t_{1}|^{\alpha},}{dode}
where $c_{0} $ depends only on $\alpha$. In particular, $t\mapsto t^{1-\alpha} (\ia f')(t) \in C^{0,\alpha}([0,T])$ and  $\da f\in C^{0,1-\alpha}_{loc}((0,T])$.
\label{lemthree}
\end{lemma}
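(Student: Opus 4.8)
The plan is to prove the H\"older bound (\ref{dode}) for $g(t):=t^{1-\alpha}(\ia f')(t)$ by exploiting the only information the hypotheses really give, namely the pointwise bound $|f'(\tau)|\le M\tau^{\alpha-1}$ for a.e.\ $\tau$, where $M:=\|t^{1-\alpha}f'\|_{L^{\infty}(0,T)}$. First I would record the uniform estimate
\[
|g(t)|\le \frac{t^{1-\alpha}}{\ga}\int_0^t (t-\tau)^{\alpha-1}M\tau^{\alpha-1}\,d\tau=\frac{M}{\ga}B(\alpha,\alpha)\,t^{\alpha},
\]
where the last integral is a Beta integral as in (\ref{dodb})--(\ref{dodc}). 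Since $g(0)=0$, this already settles (\ref{dode}) when $t_1=0$. For $0<t_1<t_2$ I would then split into two regimes. If $t_2>2t_1$ the points are \emph{far apart}, so $t_2<2(t_2-t_1)$ and the triangle inequality together with the uniform bound gives $|g(t_2)-g(t_1)|\le \frac{M}{\ga}B(\alpha,\alpha)(t_2^{\alpha}+t_1^{\alpha})\le c_0 M(t_2-t_1)^{\alpha}$.

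The delicate regime is the \emph{comparable} one, $t_1<t_2\le 2t_1$. Here I would write $\ga[g(t_2)-g(t_1)]=I+II$, splitting $\int_0^{t_2}=\int_0^{t_1}+\int_{t_1}^{t_2}$, where $II=t_2^{1-\alpha}\int_{t_1}^{t_2}(t_2-\tau)^{\alpha-1}f'(\tau)\,d\tau$ and $I=\int_0^{t_1}[\phi(t_2)-\phi(t_1)]f'(\tau)\,d\tau$ with $\phi(t):=t^{1-\alpha}(t-\tau)^{\alpha-1}$ for fixed $\tau$. For $II$ I would rescale $\tau=t_2\sigma$; since $\sigma\ge t_1/t_2\ge 1/2$ one has $\sigma^{\alpha-1}\le 2^{1-\alpha}$, and integrating $(1-\sigma)^{\alpha-1}$ yields $|II|\le \frac{2^{1-\alpha}}{\alpha}M(t_2-t_1)^{\alpha}$. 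The point of $I$ is that bounding the two kernels $\phi(t_1),\phi(t_2)$ separately cannot produce the gain $(t_2-t_1)^{\alpha}$, so the cancellation must be captured: I would compute $\phi'(t)=-(1-\alpha)\tau\,t^{-\alpha}(t-\tau)^{\alpha-2}$, write $\phi(t_2)-\phi(t_1)=\int_{t_1}^{t_2}\phi'(t)\,dt$, insert $|f'(\tau)|\le M\tau^{\alpha-1}$, and use Fubini to reach $\int_{t_1}^{t_2}t^{-\alpha}\big[\int_0^{t_1}\tau^{\alpha}(t-\tau)^{\alpha-2}d\tau\big]dt$. Evaluating the inner integral by $\tau=tw$ and the bound $w^{\alpha}\le 1$ gives $\int_0^{t_1}\tau^{\alpha}(t-\tau)^{\alpha-2}d\tau\le \frac{1}{1-\alpha}t^{\alpha}(t-t_1)^{\alpha-1}$, whence $|I|\le M\int_{t_1}^{t_2}(t-t_1)^{\alpha-1}dt=\frac{M}{\alpha}(t_2-t_1)^{\alpha}$. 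Combining the two regimes yields (\ref{dode}) with $c_0=c_0(\alpha)$.

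The two ``in particular'' assertions follow. That $t\mapsto t^{1-\alpha}(\ia f')(t)\in C^{0,\alpha}([0,T])$ is exactly (\ref{dode}) including the endpoint $t_1=0$. For $\da f\in C^{0,1-\alpha}_{\mathrm{loc}}((0,T])$, recall $\da f=I^{1-\alpha}f'$ and estimate $\da f(t_2)-\da f(t_1)$ directly on an arbitrary $[t_*,T]$ with $t_*>0$. Splitting $\int_0^{t_2}=\int_0^{t_1}+\int_{t_1}^{t_2}$ again, the new-mass term over $[t_1,t_2]$ is bounded using $\tau^{\alpha-1}\le t_*^{\alpha-1}$ there, giving $c(t_*)(t_2-t_1)^{1-\alpha}$; the kernel-difference term over $[0,t_1]$ is handled by the same device applied to $\psi(t):=(t-\tau)^{-\alpha}$, so that $\psi(t_1)-\psi(t_2)=\alpha\int_{t_1}^{t_2}(t-\tau)^{-\alpha-1}dt$, followed by Fubini and the explicit inner integral $\int_0^{t_1}\tau^{\alpha-1}(t-\tau)^{-\alpha-1}d\tau=\frac1\alpha t^{-1}\big(\tfrac{t_1}{t-t_1}\big)^{\alpha}$, after which $t\ge t_1\ge t_*$ gives $c(t_*)(t_2-t_1)^{1-\alpha}$. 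Thus $\da f$ is $(1-\alpha)$-H\"older on each $[t_*,T]$, with constant that blows up as $t_*\to0^+$, which is why only the local statement is claimed.

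I expect the \textbf{main obstacle} to be precisely the interaction near $t=0$ of the prefactor $t^{1-\alpha}$ with the singular weight $\tau^{\alpha-1}$ carried by $f'$: a termwise triangle inequality on $\phi(t_1)$ and $\phi(t_2)$ loses the H\"older gain, so the exact cancellation in the kernel difference must be extracted through the fundamental-theorem-of-calculus representation and the ensuing Fubini computation. The far-apart/comparable dichotomy is the mechanism that lets us replace this delicate estimate by the crude uniform bound whenever $t_1$ is small relative to $t_2$, thereby keeping every constant dependent on $\alpha$ alone.
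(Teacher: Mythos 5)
Your proof is correct, and it reaches (\ref{dode}) by a genuinely different route than the paper. The paper first peels off the prefactor difference, bounding $B_1=|t_2^{1-\alpha}-t_1^{1-\alpha}|\,|\int_0^{t_2}(t_2-\tau)^{\alpha-1}f'(\tau)\,d\tau|$ via a Beta-function identity and the boundedness of $h(x)=(1-x^{1-\alpha})/(1-x^{\alpha})$ on $[0,1]$, and then treats $B_2=t_1^{1-\alpha}|\int_0^{t_2}(t_2-\tau)^{\alpha-1}f'\,d\tau-\int_0^{t_1}(t_1-\tau)^{\alpha-1}f'\,d\tau|$ by the substitution $s=t_1-\tau$: its new-mass piece is easy, while its kernel-difference piece is rescaled to the one-parameter family $g(a)=\int_0^a[t^{\alpha-1}-(1+t)^{\alpha-1}](1-t/a)^{\alpha-1}\,dt$, shown bounded uniformly in $a\in(0,\infty)$ through the case analysis $a\le 2$ versus $a>2$ and the mean-value bound $t^{\alpha-1}-(1+t)^{\alpha-1}\le(1-\alpha)t^{\alpha-2}$. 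You instead keep the combined kernel $\phi(t)=t^{1-\alpha}(t-\tau)^{\alpha-1}$ intact: the regime $t_2>2t_1$ (and the case $t_1=0$) is disposed of by the crude bound $|t^{1-\alpha}(\ia f')(t)|\le c(\alpha)\,t^{\alpha}\|t^{1-\alpha}f'\|_{L^{\infty}(0,T)}$, and in the regime $t_1<t_2\le 2t_1$ the cancellation is extracted from $\phi(t_2)-\phi(t_1)=\int_{t_1}^{t_2}\phi'(t)\,dt$ with $\phi'(t)=-(1-\alpha)\tau t^{-\alpha}(t-\tau)^{\alpha-2}$, followed by Fubini. Your far/near dichotomy plays the role of the paper's case analysis in $a$, and your differentiation of $\phi$ replaces, in one stroke, both the paper's $B_1$ estimate and its mean-value bound; I checked the computation of $\phi'$, the inner-integral bound $\int_0^{t_1}\tau^{\alpha}(t-\tau)^{\alpha-2}\,d\tau\le\frac{1}{1-\alpha}t^{\alpha}(t-t_1)^{\alpha-1}$, and the resulting bounds on $I$ and $II$, and all are correct with constants depending on $\alpha$ alone. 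Finally, your treatment of the last claim is not redundant but actually supplies something the paper's proof lacks: the paper ends with the single remark that $D^{1-\alpha}f=\ia f'$, which together with (\ref{dode}) literally yields $D^{1-\alpha}f\in C^{0,\alpha}_{loc}((0,T])$, i.e.\ the stated conclusion with $\alpha$ and $1-\alpha$ interchanged in the conclusion but not in the hypothesis; your direct estimate of $I^{1-\alpha}f'$ on $[t_*,T]$, based on the closed-form integral $\int_0^{t_1}\tau^{\alpha-1}(t-\tau)^{-\alpha-1}\,d\tau=\frac{1}{\alpha}t^{-1}\bigl(t_1/(t-t_1)\bigr)^{\alpha}$, proves exactly the assertion $\da f\in C^{0,1-\alpha}_{loc}((0,T])$ as stated in the Lemma and as it is later invoked (for $\da c_m$ in Theorem \ref{przyb} and for $\xi$ in Lemma \ref{stwjeden}).
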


\begin{proof}
For the sake of the simplicity of notation in this proof, we write $\| f \|:=\| t^{1-\alpha }f' \|_{L^{\infty}(0,T)}$ and assume that  $t_{2}>t_{1}$. Then we have
\begin{eqnarray*}
&&\ga | t_{2}^{1-\alpha} \ia f'(t_{2})-t_{1}^{1-\alpha} \ia f'(t_{1}) |
\\
&\leq &|t_{2}^{1-\alpha}- t_{1}^{1-\alpha}| \left| \int_{0}^{t_{2}} (t_{2}-\tau)^{\alpha-1}f'(\tau) \dt \right|
\\
& &+t_{1}^{1-\alpha} \left| \int_{0}^{t_{2}} (t_{2}-\tau)^{\alpha-1}f'(\tau) \dt -\int_{0}^{t_{1}} (t_{1}-\tau)^{\alpha-1}f'(\tau) \dt \right|\\
&\equiv&  B_{1}(t_{1},t_{2})+B_{2}(t_{1},t_{2}),
\end{eqnarray*}
and $B_{2}(0,t_{2})=0$.  Using the obvious inequality
$|f'(s)|\leq \nf s^{\alpha-1}$ 
and the substitution $\tau=t_{2}s$, we can write
\begin{eqnarray*}
B_{1}(t_{1},t_{2}) &\leq &\nf |t_{2}^{1-\alpha}- t_{1}^{1-\alpha}|  \int_{0}^{t_{2}} (t_{2}-\tau)^{\alpha-1}\tau^{\alpha-1} \dt\\
&=&\nf |t_{2}^{1-\alpha}- t_{1}^{1-\alpha}| t_{2}^{2\alpha-1}  \int_{0}^{1} (1-s)^{\alpha-1}s^{\alpha-1} ds
=c_{0} \nf (\tadj) h\left( \frac{t_{1}}{t_{2}}\right) ,
\end{eqnarray*}
 where $h(x)=\frac{1-x^{1-\alpha}}{1-x^{\alpha}}$. Since
 $|t_{2}^{\alpha}-t_{1}^{\alpha}| \leq |t_{2}-t_{1}|^{\alpha}$ and function $h(x)$ is bounded on  $[0,1]$,
we get
\[
B_{1} (t_{1},t_{2})\leq c_{0 } \nf |t_{2}-t_{1}|^{\alpha}.
\]
Now, we estimate $B_{2}$. 
In this case $t_{1}>0$.   We use substitution $s=t_{1}-\tau$ and decompose $B_{2}$ as follows,
\begin{eqnarray*}
&&B_{2}(t_{1},t_{2})\\
&=&  t_{1}^{1-\alpha} \left| \int_{0}^{t_{2}} (t_{2}-\tau)^{\alpha-1}f'(\tau) \dt -\int_{0}^{t_{1}} (t_{1}-\tau)^{\alpha-1}f'(\tau) \dt \right|
\\
&=&t_{1}^{1-\alpha} \left| -\int_{t_{1}}^{t_{1}-t_{2}} ((t_{2}-t_{1})+s)^{\alpha-1}f'(t_{1}-s) ds +\int^{0}_{t_{1}} s^{\alpha-1}f'(t_{1}-s) ds \right|
\\
&=&t_{1}^{1-\alpha} \left| \int^{0}_{t_{1}-t_{2}} ((t_{2}-t_{1})+s)^{\alpha-1}f'(t_{1}-s) ds +\int_{0}^{t_{1}} \left[ ((t_{2}-t_{1})+s)^{\alpha-1}-s^{\alpha-1} \right] f'(t_{1}-s) ds \right|.
\end{eqnarray*}
Applying  $|f'(s)|\leq \nf s^{\alpha-1}$ 
we reach the following estimate
\begin{eqnarray*}
B_{2}(t_{1},t_{2}) &\leq& \nf t_{1}^{1-\alpha}  \int^{0}_{t_{1}-t_{2}} ((t_{2}-t_{1})+s)^{\alpha-1}(t_{1}-s)^{\alpha-1} ds
\\
&&+\nf t_{1}^{1-\alpha}\int_{0}^{t_{1}} \left[ s^{\alpha-1}-((t_{2}-t_{1})+s)^{\alpha-1} \right] (t_{1}-s)^{\alpha-1} ds
\\
&\equiv &\nf  B_{2,1}(t_{1},t_{2})+\nf B_{2,2}(t_{1},t_{2}).
\end{eqnarray*}
First, we deal with  $B_{2,2}$. After substitution $(\tdj)t=s$, we get
\begin{eqnarray*}
B_{2,2}(t_{1},t_{2})&=& t_{1}^{1-\alpha} \tdjaj \int_{0}^{t_{1}} \left[ \left(\frac{ s}{\tdj}\right)^{\alpha-1} -(1+\frac{s}{\tdj})^{\alpha-1}\right] (t_{1}-s)^{\alpha-1} ds
\\
&=&t_{1}^{1-\alpha} \tdjaj \int_{0}^{\frac{t_{1}}{\tdj}} \left[ t^{\alpha-1} -(1+t)^{\alpha-1}\right] (t_{1}-(\tdj)t)^{\alpha-1} (\tdj)dt
\\
&= &\tdja \int_{0}^{\frac{t_{1}}{\tdj}} \left[ t^{\alpha-1} -(1+t)^{\alpha-1}\right] (1-\frac{\tdj}{t_{1}}t)^{\alpha-1} dt.
\end{eqnarray*}
We write $a=\frac{t_{1}}{\tdj}>0$. 
We have to  estimate the following integral,
\eqq{g(a)=\int_{0}^{a} \left[ t^{\alpha-1} -(1+t)^{\alpha-1}\right] (1-\frac{t}{a})^{\alpha-1} dt, }{d}
independently of $a\in (0,\infty)$.  We have to consider a number of cases. First, we assume that  $a\in (0,2]$. Then, using substitution $as=a-t$, we get,
\begin{eqnarray*}
g(a)&=& a^{1-\alpha} \int_{0}^{a} \left[ t^{\alpha-1} -(1+t)^{\alpha-1}\right] (a-t)^{\alpha-1} dt \leq a^{1-\alpha} \int_{0}^{a}  t^{\alpha-1}  (a-t)^{\alpha-1} dt
\\
&=& a^{\alpha} B(1-\alpha, 1-\alpha) \leq 2^{\alpha} B(1-\alpha, 1-\alpha) .
\end{eqnarray*}
If  $a\in (2, \infty)$, we can write
\[
g(a)= \left( \int_{0}^{1}+\int_{1}^{\frac{a}{2}}+\int_{\frac{a}{2}}^{a} \right) \left[ t^{\alpha-1} -(1+t)^{\alpha-1}\right] (1-\frac{t}{a})^{\alpha-1} dt.
\]
Then, applying 
$(1-\frac{t}{a})^{\alpha-1} \leq (1-t)^{\alpha-1}$, we get
\begin{eqnarray*}
\int_{0}^{1} \left[ t^{\alpha-1} -(1+t)^{\alpha-1}\right] (1-\frac{t}{a})^{\alpha-1} dt
&\leq& \int_{0}^{1} \left[ t^{\alpha-1} -(1+t)^{\alpha-1}\right] (1-t)^{\alpha-1} dt \\
&=& g(1) \leq B(1-\alpha, 1-\alpha).
\end{eqnarray*}
Next, by the 
mean theorem, we obtain
\eqq{t^{\alpha-1} -(1+t)^{\alpha-1} \leq (1-\alpha)t^{\alpha-2} .}{e}
Hence, 
since $a-t\ge \frac{a}{2}$, we obtain
\begin{eqnarray*}
&&\int_{1}^{\frac{a}{2}} \left[ t^{\alpha-1} -(1+t)^{\alpha-1}\right] (1-\frac{t}{a})^{\alpha-1} dt \leq (1-\alpha)\int_{1}^{\frac{a}{2}} t^{\alpha-2} (1-\frac{t}{a})^{\alpha-1} dt
\\
&=&(1-\alpha) a^{1-\alpha}\int_{1}^{\frac{a}{2}} t^{\alpha-2} (
{a-t}
)^{\alpha-1} dt \leq (1-\alpha)2^{1-\alpha} \int_{1}^{\frac{a}{2}} t^{\alpha-2} dt=
2^{\alpha-1}[1-(\frac{a}{2})^{\alpha-1}]\\& \leq& 2^{\alpha-1}.
\end{eqnarray*}
Using again (\ref{e}), we deduce,
\begin{eqnarray*}
&&\int_{\frac{a}{2}}^{a}  \left[ t^{\alpha-1} -(1+t)^{\alpha-1}\right] (1-\frac{t}{a})^{\alpha-1} dt \leq (1-\alpha)\int_{\frac{a}{2}}^{a}   t^{\alpha-2}  (1-\frac{t}{a})^{\alpha-1} dt
\\
&\leq &2^{2-\alpha} a^{\alpha-2}\int_{\frac{a}{2}}^{a}    (1-\frac{t}{a})^{\alpha-1} dt
= \frac{4^{1-\alpha}}{\alpha} a^{\alpha-1}\leq \frac{2^{1-\alpha}}{\alpha}.
\end{eqnarray*}
Therefore, $\sup\limits_{a\in (0,\infty)} g(a) \leq c_{0}$ and as a result,
\[
B_{2,2}(t_{1},t_{2}) \leq c_{0} |\tdj|^{\alpha}.
\]
Now, we turn to 
$B_{2,1}$. Here, 
$s=t_1 -\tau$ is negative, hence from 
$(1-\frac{s}{t_{1}})^{\alpha-1}\leq 1$ we get
\begin{eqnarray*}
B_{2,1}(t_{1},t_{2})&=&t_{1}^{1-\alpha}  \int^{0}_{t_{1}-t_{2}} ((t_{2}-t_{1})+s)^{\alpha-1}(t_{1}-s)^{\alpha-1} ds
\\&= &\tdjaj \int^{0}_{t_{1}-t_{2}} (1+\frac{s}{\tdj})^{\alpha-1}(1-\frac{s}{t_{1}})^{\alpha-1} ds
\\
 &\leq&
\tdjaj \int^{0}_{t_{1}-t_{2}} (1+\frac{s}{\tdj})^{\alpha-1} ds=\frac{1}{\alpha} (\tdj)^{\alpha}.
\end{eqnarray*}
Thus,
\[
B_{2} (t_{1},t_{2})\leq c_{} \nf (\tdj)^{\alpha},
\]
and the proof of (\ref{dode}) is completed.
\no If we set $t^{1-\alpha}(\ia f')(t)_{|t=0}=0$, then (\ref{dode}) means that $t^{1-\alpha}(\ia f')(t) \in C^{0, \alpha}([0,T])$ and then $(\ia f')(t) \in C^{0, \alpha}_{loc}((0,T])$. To  finish the proof it is enough to notice that $D^{1-\alpha}f=\ia f'$.
\end{proof}

\end{document}